\newtheorem{theorem}{Theorem}
\newtheorem{assumption}[theorem]{Assumption}
\newtheorem{thm}{Theorem}[section]
\newtheorem{cor}[thm]{Corollary}
\newtheorem{lem}[thm]{Lemma}
\newtheorem{rmk}[thm]{Remark}
\newtheorem{prop}{Proposition}
\newcommand{\N}{\mathbb N}
\newcommand{\Z}{\mathbb Z}
\newcommand{\R}{\mathbb R}
\newcommand{\intT}{\int\limits_{0}^T}
\newcommand{\omsig}{\Omega_{\sigma}}
\newcommand{\cu}{\operatorname{curl}}
\newcommand{\di}{\operatorname{div}}
\newcommand{\trt}{\gamma_\mathrm{t}}
\DeclareMathOperator*{\essinf}{ess\,inf}
\DeclareMathOperator*{\esssup}{ess\,sup}
\renewcommand{\vec}[1]{\underline{#1}}
\newcommand{\abs}[1]{\left\lvert{#1}\right\rvert}
\newcommand{\norm}[1]{{\left\lVert{#1}\right\rVert}}
\definecolor{dgreen}{rgb}{0,.6,0}
\NewDocumentCommand{\tens}{t_}
{%
	\IfBooleanTF{#1}
	{\tensop}
	{\otimes}%
}
\begin{document}

\title{Space-Time FEM for the Vectorial Wave Equation under Consideration of Ohm's Law}
\author{Julia I.M.~Hauser$^1$}
\date{
        $^1$Joanneum Research, \\
        Institute for Telecommunications, Navigation and Signal Processing,\\
         Graz, Austria \\[1mm]
        {\tt julia.hauser@joanneum.at}  \\
      }
      
\maketitle

%******************************************************************************
\begin{abstract}
  The ability to deal with complex geometries and to go to higher orders is the main advantage of space-time finite element methods. Therefore, we want to develop a solid background from which we can construct appropriate space-time methods. In this paper, we will treat time as another space direction, which is the main idea of space-time methods. First, we will briefly discuss how exactly the vectorial wave equation is derived from Maxwell's equations in a space-time structure, taking into account Ohm's law. Then we will derive a space-time variational formulation for the vectorial wave equation using different trial and test spaces. This paper has two main goals. First, we prove unique solvability for the resulting Galerkin--Petrov variational formulation. Second, we analyze the discrete equivalent of the equation in a tensor product and show conditional stability, i.e. a CFL condition.  \\
  Understanding the vectorial wave equation and the corresponding space-time finite element methods is crucial for improving the existing theory of Maxwell's equations and paves the way to computations of more complicated electromagnetic problems. 
\end{abstract}

%-------------------------------------------------------------------------------
\section{Introduction}	
\subsection{Derivation of the vectorial wave equation}
Before we discuss the vectorial wave equation, let us derive the equation first. For that purpose, we need to take a closer look at Maxwell's equations which are used to model electromagnetic problems and are the foundation of classical electromagnetism. Maxwell's equations are given by a system of partial differential equations, namely
\begin{subequations}\label{eq:MaxwellSys}
	\begin{equation}\label{eqn:Max1}
		\cu_x\ \vec E = -\partial_t \vec B,
	\end{equation}
	\begin{equation}\label{eqn:Max2}
		\cu_x\ \vec H = \partial_t\vec D + \vec j,
	\end{equation}
	\begin{equation}\label{eqn:Max3}
		\mathrm{div}_x \vec B = 0,
	\end{equation}
	\begin{equation}\label{eqn:Max4}
		\mathrm{div}_x \vec D = \rho.
	\end{equation}		
\end{subequations}
In equation \eqref{eq:MaxwellSys}, there are four unknowns: The electric field $\vec E$, the magnetic field $\vec H$, the electric flux density $\vec D $  and the magnetic flux density $\vec B$. The variable $\vec j$ is the given electric current density and $\rho$ is the given charge density. 
Additionally to the Maxwell system \eqref{eq:MaxwellSys} we use constitutive relations, also named material laws, relating the electric flux density $\vec D $ with the electric field $\vec E$ and the magnetic field $\vec H$ with the magnetic flux density $\vec B$. In this paper, we will consider the  constitutive relations
\begin{subequations}\label{eq:MaterialLaw}
	\begin{equation}
		\vec D=\varepsilon \vec E,
	\end{equation}
	\begin{equation}
		\vec H=\mu \vec B,
	\end{equation}
\end{subequations}
where $\varepsilon$ is the permittivity and $\mu$ the permeability, which we define in Assumption \ref{A:Assumptions_all} in more detail.	  

How to solve the Maxwell system \eqref{eq:MaxwellSys} has been an active field of study in the last century. One possible solution is going into the frequency domain. For that purpose, we assume that the solutions of system \eqref{eq:MaxwellSys} behave like waves in time. In the literature, we find several introductions to Maxwell's equations. We will not discuss the modeling background in the frequency domain in this paper but simply refer to some of these works such as  \cite{Serdyukov,landauelectrodynamics} or, from a mathematical and numerical point of view \cite{Monk,cessenat1996mathematical}. 
A good introduction to the computational theory of the frequency domain would be \cite{Monk,umashankar1989introduction}. In the last century, the time-harmonic Maxwell's equations were often applied to scattering problems, see e.g.~\cite{jackson1998classical}. Analytical solutions to scattering problems can be found in \cite{poggio1970integral} and theory on inverse scattering and optimal control in scattering by Colton and Kress in 1983, \cite{colton1983integral}. For an introduction to inverse scattering problems, we refer to \cite{coltoninverse}.

Another possibility to solve Maxwell's equations \eqref{eq:MaxwellSys} is the time-stepping method which is introduced in e.g. \cite[Ch.~12.2]{Jin} and references there. However,  the time-stepping approach {exhibits} instabilities. 	A possibility to deal with these complications is to stabilize the system which is created by the time-stepping method. Examples can be found in, e.g., \cite{bai2022second,XIE2021109896,crawford2020unconditionally}. However, since the time-stepping method is a type of finite difference method, it cannot deal with complex geometries.

The possibility to deal with complex geometries as well as going into higher order is the main advantage of space-time finite element methods. In this paper, we will set the theoretical background that is needed for such a method and explore the possibilities and restrictions of the vectorial wave equation that is derived from Maxwell's equations. We will also study elements of second order in time when we apply the theory to a finite element method. The goal of this paper is to derive a space-time variational formulation for the vectorial wave equation, show its unique solvability and analyze its numerical properties. 

First, we will start by deriving the vectorial wave equation from Maxwell's equations. For the modeling, we will assume that all functions in \eqref{eq:MaxwellSys} are smooth and we consider a space-time domain $Q$ in $\R^4$ which is star-like with respect to a ball $B_r$ and Lipschitz in space. The star-like property implies {that} the convex hull of any {$x\in Q$} and $B_r$ is contained in  {$Q$}.  Let us not treat time differently from space, but rewrite the equations such that the partial derivative in time is simply a partial derivative of a space-time derivative
{and therefore rewrite Maxwell's equations into a 4D system of equations. For this purpose, we use the exterior calculus, which allows for a metric-independent generalisation of Stokes' and Gauss' theorem. In the exterior calculus, we consider the exterior derivative $\mathrm{d}$, which extends the concept of the derivative of a function to differential forms of higher degree. This allows us to rewrite Maxwell's equations as a wave equation and implies which function spaces are a natural choice for the test and ansatz spaces of the variational formulation for this wave equation.
	
	Let us therefore formulate Maxwell's equations using the exterior calculus. A detailed introduction to the formalism can be found, for example, in \cite{fortney,lang}. Here we only give the most important considerations for deriving the vector wave equation.  In this context, there are two important differential forms, namely the Faraday 2-form $F$ and the Maxwell 2-form $G$. The Faraday 2-form $F$ contains the electric field $\vec E$ and the magnetic flux density $\vec B$, while the Maxwell 2-form contains the electric flux density $\vec D$ and the magnetic field $\vec H$. The source term $\mathcal{J}$ appears on the right-hand side and includes the current density $\vec j$ and the charge density $\rho$. On the other side, we want to include the material law \eqref{eq:MaterialLaw}. This is done by using a weighted Hodge star operator $\star^{\varepsilon,-\mu^{-1}}$, which maps the Faraday 2-form $F$ to the Maxwell 2-form $G$ using the material law \eqref{eq:MaterialLaw}.
	
	If these definitions are inserted into Maxwell's equations and a metric-independent representation is used, the following equations are obtained
	\begin{align}
		\nonumber
		\mathrm{d}\ F &= 0,\\
		\label{eqn:Maxwell4D}
		\mathrm{d}\ G &= \mathcal{J},\\
		\nonumber
		G &= \star^{\varepsilon,-\mu^{-1}} F.
	\end{align}
	For the derivation of the equations consider \cite{stern} or \cite[p.~135]{post1962formal}. The third equation represents the material law \eqref{eq:MaterialLaw} and includes the weighted Hodge star operator. As an example for $\R^4$ with the Euclidean metric $\varepsilon$ is the weight in the direction $\star(\mathrm{d}x^{01},\mathrm{d}x^{02},\mathrm{d}x^{03})^T$ and $(-\mu^{-1})$ in the direction $\star(\mathrm{d}x^{23},\mathrm{d}x^{31},\mathrm{d}x^{12})^T$. 
	
	Let us take a closer look at the equations in \eqref{eqn:Maxwell4D}.   The second equation, $\mathrm{d}\ G = \mathcal{J}$,  includes the second, \eqref{eqn:Max2}, and fourth equation, \eqref{eqn:Max4}, of the system \eqref{eq:MaxwellSys}. The first equation, $\mathrm{d}\ F = 0$,  includes the first, \eqref{eqn:Max1}, and third equation \eqref{eqn:Max3} of the system \eqref{eq:MaxwellSys}. }
Moreover, the first equation in \eqref{eqn:Maxwell4D} implies that the Faraday form $F$ is closed and since the domain $Q$ is starlike, the Poincaré-Lemma, \cite[Thm.~4.1]{lang}, can be {applied}. The Poincar\'{e}-Lemma tells us that the form $F$ is closed and therefore exact, i.e. there exists a 1-form $\mathcal A$ such that $\mathrm{d}\mathcal A = F.$

If we insert $F = \mathrm{d}\mathcal A$ into the second equation combined with the third equation of (\ref{eqn:Maxwell4D}), we derive the following wave-type equation
\begin{align}
	\label{eqn:PotentialEquation}
	\mathrm{d} \star^{\varepsilon,-\mu^{-1}} \mathrm{d} \mathcal A =  \mathcal{J}.
\end{align}
Additionally, we have the following two relations
\begin{align*}
	\vec E &= -\partial_t \underline{A} +\nabla_x \vec A_0,\\
	\vec B &= \nabla\times\underline{A},
\end{align*}
where $A_0$  is the time component and $\underline{A}:=(A_1,A_2,A_3)^T$ the spatial component of $\mathcal A,$ see e.g.~\cite[p.~389]{fortney}. 
Note that \eqref{eqn:PotentialEquation} will result in the scalar wave equation if we use a differential form of order zero instead of order one for $\mathcal A$, while the equation \eqref{eqn:PotentialEquation} as such will end up in the vectorial wave equation. Hence, in terms of differential forms in 4D, the scalar and the vectorial wave equation are closely related.

Moreover, we know that $\mathcal A$ is not unique. If we take a 0-form $\tilde{\mathcal A}$ and add $\mathrm{d}\tilde{\mathcal A}$ to $\mathcal A$ then $F=\mathrm{d}\mathcal A=\mathrm{d}\mathcal A+	\mathrm{d}\mathrm{d}\tilde{\mathcal A} = \mathrm{d}(\mathcal A+\mathrm{d}\tilde{\mathcal A})$. If we take a look at the corresponding $L^2$ {function spaces}, we see that this translates to adding the space-time gradient of any $H^1_0(Q)$-function to the potential and getting another viable potential. Hence we need a gauge to fix the potential $\mathcal A$. There are many gauges, see e.g.~a paper from 2001 on the history of gauge invariance \cite{jackson2001historical}. 

Depending on the gauge we can derive different equations from \eqref{eqn:PotentialEquation}. To derive the vectorial wave equation, we use the Weyl gauge, also named the temporal gauge, $A_0=0.$ Then we consider \eqref{eqn:PotentialEquation} in euclidean space and arrive at
\begin{subequations}	
	\label{eqn:Betw_Pot_and_VecWav}
	\begin{equation}
		\label{eqn:Betw_Pot_and_VecWav1}
		\partial_t(\varepsilon (\partial_t \underline{A}))+\cu_x\left(\mu^{-1}\cu_x\underline{A}\right) =\underline{j},\\
	\end{equation}
	\begin{equation}
		\label{eqn:Betw_Pot_and_VecWav2}
		\textmd{div}_x(\varepsilon (\partial_t \underline{A}) )=-\rho.
	\end{equation}
\end{subequations}
Note that the equation \eqref{eqn:Betw_Pot_and_VecWav1} is the vectorial wave equation. In addition to these two equations, we assume that the charge is preserved and therefore the continuity equation 
\begin{align}
	\label{eqn:ContinuityEq}
	\partial_t \rho +\mathrm{div}_x\underline{j} =0
\end{align}
holds true. However, this implies assumptions on the regularity of $\vec A$ and $\vec j$ since we are interested in the second-order time derivative of $\vec A$ and the divergence of $\vec j$. This will  be discussed further in the section on finite element solutions. The continuity equation is, however, already included in the combination of both equations of \eqref{eqn:Betw_Pot_and_VecWav}. 
On the other hand, we can rewrite the second equation  \eqref{eqn:Betw_Pot_and_VecWav2} into initial conditions for $\partial_t \vec A$. Indeed, if we assume enough regularity of $\vec A,$ $\rho,$ and $\vec j$, we can take the spatial divergence of the first equation \eqref{eqn:Betw_Pot_and_VecWav1} and use the continuity equation to derive
\begin{align*}
	\partial_t (\textmd{div}_x (\varepsilon \partial_t\vec A(t)) + \rho ) =0.
\end{align*}
Hence $\textmd{div}_x (\varepsilon \partial_t\vec A(t))(t) + \rho(t) =\textmd{div}_x (\varepsilon \partial_t\vec A(t))(0) + \rho(0) $, for all $t\in (0,T]$. Therefore, if $ \partial_t \vec A(0)$ satisfies
\begin{align*}
	\textmd{div}_x (\varepsilon \partial_t\vec A(0))  = -\rho(0)
\end{align*}
then we satisfy  \eqref{eqn:Betw_Pot_and_VecWav2} for all $t> 0$.

\subsection{The vectorial wave equation under consideration of Ohm's law}

Now that we derived the vectorial wave equation, let us take a look at Ohm's law. In the case of a conducting material, the electromagnetic field itself induces currents and in the easiest case this can be modeled by Ohm's law. Hence, we want to include Ohm's law into our equation which is given by 
\[
\vec j = \sigma \vec E+ \vec j_a,
\]
where $\sigma$ is the conductivity and $\vec j_a$ the applied current.  The relationship between $ \vec E$ and the magnetic vector potential $\vec A$ is given by $\vec E = -\partial_t \vec  A$. Therefore, in this paper, we consider the following equation
\begin{subequations}
	\begin{alignat}{4}
		\varepsilon\partial_{tt}\vec A+  \sigma \partial_t \vec A+\cu_x(\mu^{-1}\cu_x \vec A)  &= \vec j_a &&\textmd{ in }Q = (0,T)\times\Omega,\\
		\partial_t \vec A(0,x) &= \vec \psi(x) &&\textmd{ in }\Omega,\\
		\vec A(0,x) &=  \vec \phi(x) &&\textmd{ in }\Omega,\\
		\trt  \vec A &= 0 &&\textmd{ on }\Sigma =\{0,T\}\times\partial\Omega,
	\end{alignat}
	\label{eqn:H2_curl-dgl_drhs}
\end{subequations}
where $\trt $ is the tangential trace of $\vec A$ on $\Omega$.

Before we state the general assumption of this paper on the functions in \eqref{eqn:H2_curl-dgl_drhs}, we have to define a few  function spaces that will be used throughout this paper. First, we define for $Q\subset \R^d$, $d\in \N,$ the space $L^2(Q;\R^d)$ as the usual Lebesgue space for vector-valued functions $v \colon \, Q \to \R^d$ with the inner product $(v,w)_{L^2(\Omega)} := (v,w)_{L^2(\Omega;\R^d)}:= \int_Q( v(x), w(x))_{\R^d} \mathrm dx$  for $v,w \in L^2(Q;\R^d)$ and the induced norm $\norm{\cdot}_{L^2(Q)} := \norm{\cdot}_{L^2(Q; \R^d)} := \sqrt{(\cdot,\cdot)_{L^2(Q)}}$.  Second, we define $L^\infty(\Omega;\R):= L^\infty(\Omega)$ for $\Omega\subset \R^d$, $d\in\N$, as the space of measurable functions bounded almost everywhere and equipped with the usual norm $\norm{\cdot}_{L^\infty(\Omega)}$. Third, we define the space $L^\infty(\Omega;\R^{d\times d})$, $d\in\N$, as the space of matrix-valued measurable functions bounded almost everywhere with the norm
\[	\|w\|_{L^\infty(\Omega)} := \|w\|_{L^\infty(\Omega;\R^{d\times d})} :=\esssup_{x\in \Omega}  \sup_{0 \ne \xi\in \R^d} \frac{\xi^\top  {w}(x) \xi}{\xi^\top\xi}.\]
Finally, we define space-time spaces $L^2(0,T;X)$ with the inner product $(v,w)_{L^2(0,T;X)} \coloneqq  \int_0^T ( v(t), w(t))_{X} \mathrm dt$ and $L^1(0,T;L^2(\Omega;\R^d))$ with the norm  $\norm{\vec v}_{2,1,Q}\coloneqq \norm{\vec v}_{L^1(0,T;L^2(\Omega;\R^d))}\coloneqq \int_0^T\norm{v}_{L^2(\Omega)} \mathrm dt$ in the same way as above. Additionally, we can define the space $H^1(0,T;X)$  as the Hilbert space $H^1(0,T)$ over the Hilbert space $X$, see \cite{zeidler} for more details.

With these definitions we define the tangential trace operator $\trt$ for $d=3$ as the continuous mapping $\trt \colon \, H(\cu;\Omega) \to H^{-1/2}(\partial \Omega;\R^3)$. The tangential trace operator is the unique extension of the vector-valued function $\trt \vec v = \vec v_{|\partial \Omega} \times \vec n_x$ for $\vec v \in H^1(\Omega)^3$ that is defined by the Green's identity for the $\cu$ operator. For $d=2$ we define the tangential trace operator $\trt \colon \, H(\cu;\Omega) \to H^{-1/2}(\partial \Omega)$ as the unique extension of the scalar function $\trt \vec v = \vec v_{|\partial \Omega} \cdot \vec \tau_x$ for $\vec v \in H^1(\Omega)^2$, where $\vec \tau_x $ is the unit tangent vector, i.e., $\vec \tau_x \cdot \vec n_x = 0$.  We define the usual trace operator  $\gamma: H^1(\Omega,\R^d)\to H^{1/2}(\partial \Omega;\R^d)$  as  $\gamma \vec v := (v)_{|\partial \Omega}$  for $d \in \N $ and  $H^{-1/2}(\partial \Omega)$ is the dual space of the  Sobolev space $H^{1/2}(\partial \Omega)$. For more details on the tangential trace operator $\trt$, consider \cite{Assous2018,ErnGuermond2020I,Monk}. 

For detailed definitions of the spaces $H_0(\cu;\Omega) := \{\vec v\in H(\cu;\Omega)\,|\, \trt \vec v = 0\}$ and $H(\mathrm{div};\Omega)$, we reference \cite{Monk}.  However, we quickly note how the curl operator behaves differently in two and three dimensions. For $d=2$, we set the curl of a sufficiently smooth vector-valued function $\vec v \colon \, \Omega \to \R^2$ with $\vec v = (v_1,v_2)^\top$ as the scalar-valued curl operator $\cu_x \vec v = \partial_{x_1} v_2 - \partial_{x_2} v_1$. Sometimes this curl operator is called rot. Additionally, for a sufficiently smooth scalar function $w \colon \, \Omega \to \R$, we define the vector-valued curl operator $\cu_x w=(\partial_{x_2} w, - \partial_{x_1} w)^\top.$ Note that the vector-valued curl operator is the adjoint operator of the scalar-valued curl operator for $d=2$.\\	
In the case of $d=3$, the curl of a sufficiently smooth vector-valued function $\vec v \colon \, \Omega \to \R^3$ with $\vec v= (v_1, v_2, v_3)^\top$ is given by the vector-valued function
\[\cu_x \, \vec v =\left( \partial_{x_2} v_3 - \partial_{x_3} v_2, \partial_{x_3} v_1 - \partial_{x_1} v_3, \partial_{x_1} v_2 - \partial_{x_2} v_1 \right)^\top. \]

With these definitions, we can write the following assumptions that hold throughout the paper.
\begin{assumption} \label{A:Assumptions_all}
	Let $d=2,3$ and the spatial domain $\Omega \subset \R^d$ and  $\mathrm{supp}(\sigma)\subset \R^d$, $\sigma: \Omega\to \R^{d\times d}$, be given such that
	\begin{itemize}
		\item $\Omega$ and $\Omega_\sigma := \mathrm{supp}(\sigma)\subset\Omega$ are Lipschitz domains,
		\item and $Q = (0,T) \times \Omega$ is a star-like domain with respect to a ball $B$.
	\end{itemize}
	Further, let $\sigma$, $\vec j$, $\varepsilon$ and $\mu$ be given functions, which fulfill:
	\begin{itemize}
		\item The conductivity $\sigma \in L^\infty(\mathrm{supp}(\sigma); \R^{d \times d})$, $\mathrm{supp}(\sigma)\subset \Omega,$ is uniformly positive definite, i.e.
		\begin{equation*}
			\sigma_{\min} := \essinf_{x\in \mathrm{supp}(\sigma)} \inf_{0 \ne \xi\in \R^d} \frac{\xi^\top \sigma(x) \xi}{\xi^\top\xi} > 0.
		\end{equation*}		
		\item The applied current density $\vec j_a \colon \, Q \to \R^d$ satisfies $\vec j_a \in L^1(0,T;L^2(\Omega;\R^d))$.
		\item The permittivity $\varepsilon \colon \, \Omega \to \R^{d \times d}$ is symmetric, bounded, i.e. $\varepsilon \in L^\infty(\Omega; \R^{d \times d})$, and uniformly positive definite, i.e., 
		\begin{equation*}
			\varepsilon_{\min} := \essinf_{x\in \Omega} \inf_{0 \ne \xi\in \R^d} \frac{\xi^\top \varepsilon(x) \xi}{\xi^\top\xi} > 0
		\end{equation*}
		and $\varepsilon^{\max}:= \norm{\varepsilon}_{L^\infty(\Omega)}$.
		\item For $d=2$, the permeability $\mu \colon \, \Omega \to \R$ satisfies $\mu \in L^\infty(\Omega;\R)$, $\mu^{\max}:= \norm{\mu}_{L^\infty(\Omega)}$, and
		\begin{equation*}
			\mu_{\min} := \essinf_{x\in \Omega} \mu(x)  > 0.
		\end{equation*}		
		For $d=3$, the permeability $\mu \colon \, \Omega \to \R^{3 \times 3}$  is symmetric, bounded, i.e. $\mu \in L^\infty(\Omega; \R^{3 \times 3})$,  $\mu^{\max}:= \norm{\mu}_{L^\infty(\Omega)}$, and uniformly positive definite, i.e., 
		\begin{equation*}
			\mu_{\min} := \essinf_{x\in \Omega} \inf_{0 \ne \xi\in \R^3} \frac{\xi^\top \mu(x) \xi}{\xi^\top\xi} > 0.
		\end{equation*}
	\end{itemize}
	For the initial data $\vec \phi$ and $\vec \psi$ we assume that:
	\begin{itemize}
		\item The initial data $\vec \phi:\Omega \to \R^d$ satisfies $\phi \in H_0(\cu_x;\Omega)$.
		\item The initial data $ \vec \psi \in H(\mathrm{div}_x;\Omega)$ satisfies $\mathrm{div}_x (\varepsilon \vec \psi)=-\rho(0)$ in $\Omega$. 
	\end{itemize}
\end{assumption}
Now that the assumptions are stated and the vectorial wave equation is introduced, let us take a look at the structure of the rest of the paper. 
In the remainder of this section, we will introduce the  function spaces that are needed to formulate the space-time variational formulation of the vectorial wave equation. We will discuss that they are indeed Hilbert spaces and go over possible basis representations. At the end of this section, we will derive the variational formulation. The second section of this paper is dedicated to the proof of unique solvability for the variational formulation and norm estimates of the solution. In this proof, we will use a Galerkin method and use the previously developed basis representation. In the third section of this paper, we will discuss the space-time finite element spaces and discretization. We will derive a CFL condition and take a look at examples that   illustrate this CFL condition. In the end, we will sum up the conclusions and give an outlook.

%---------------------------------------------------------------------------
\subsection{The space-time  Sobolev spaces}
To derive a variational formulation, we need to consider the appropriate  function space for  the potential $\mathcal A$ satisfying \eqref{eqn:PotentialEquation}. In this section, we will derive them and show properties that will be needed in this paper.\\
To derive the  function spaces let us assume for a moment that $d=3$, then $Q\subset \R^4$. From the derivation of the magnetic vector potential $\mathcal A$ we know that $\mathcal A$ is a $1$-form in $\R^4$. For $1$-forms the corresponding  function space in the $L^2$-Hilbert complex in $\R^4$  is $ H(\mathrm{Curl},Q;\R^4)$, see e.g.~\cite{4DDeRham}. The space $ H(\mathrm{Curl},Q;\R^4)$ is defined by
\begin{align*}
	H(\mathrm{Curl},Q;\R^4) &\coloneqq \lbrace \ \vec u\in L^2(Q;\R^4)\ |\ \mathrm{Curl}\ \vec u \in L^2(\Omega;\mathbb{K}) \ \rbrace,\\
	[\mathrm{Curl}\ \vec u]_{ij} &\coloneqq \sum_{k,l=1}^4 \varepsilon_{ijkl}\partial_ku_l,
\end{align*}
see e.g.~\cite{4DDeRham},  where $\mathbb{K}$ is the vector space of $4\times 4$ skew symmetric matrices. Note, that the operator $\mathrm{Curl}$ is an operator in both space and time. Additionally, we use the Weyl gauge, the temporal gauge which implies $A_0=0$, to make the potential $\mathcal A$ unique in the derivation of the vectorial wave equation. Hence, we are interested in a function of the type $(0,A_1,A_2,A_3)^T$ that is in $H(\mathrm{Curl},Q;\R^4)$. Rewriting the operator $\mathrm{Curl}$ for $A_0=0$ and $A = (A_0,\underline{A})^T\in H(\mathrm{Curl},Q;\R^4)$ leads to the conditions
\begin{equation}
	\label{def:FuncCond}
	\underline{A}\in L^2(Q,\R^3),\qquad
	\cu_x\underline{A}\in L^2(Q,\R^3),\quad \text{and}\quad
	\partial_t\underline{A}\in L^2(Q,\R^3).
\end{equation}

Before we define the appropriate function space, we take a quick look at the coefficients in the vectorial wave equation \eqref{eqn:H2_curl-dgl_drhs}. When we will formulate a variational formulation for the equation \eqref{eqn:H2_curl-dgl_drhs}, we will use the weighted $L^2(\Omega;\R^d)$-inner products
\begin{align*}
	(\vec u, \vec v)_{L^2_\varepsilon(\Omega)} &:= \left( \varepsilon \vec u, \vec v \right)_{L^2(\Omega)} ,\\
	(\vec u, \vec v)_{L^2_\mu(\Omega)} &:=  \left( \mu^{-1} \vec u, \vec v \right)_{L^2(\Omega)} ,
\end{align*}
for $\vec v, \, \vec w \in L^2(\Omega;\R^d)$ and the $H_0(\cu;\Omega)$-inner product
\begin{equation*}
	(\vec u, \vec v)_{H_{0,\varepsilon,\mu}(\cu;\Omega)} := (\vec u, \vec v)_{L^2_\varepsilon(\Omega)} +  (\cu_x\vec v,\cu_x \vec w)_{L^2_\mu(\Omega)},
\end{equation*}
for $ \vec v, \, \vec w \in H_0(\cu;\Omega).$
Note, that due to the Assumption~\ref{A:Assumptions_all} the norms $\norm{\cdot}_{L^2_\varepsilon(\Omega)}$ and $\norm{\cdot}_{H_{0,\varepsilon,\mu}(\cu;\Omega)}$, induced by $(\cdot, \cdot)_{L^2_\varepsilon(\Omega)}$ and $(\cdot, \cdot)_{H_{0,\varepsilon,\mu}(\cu;\Omega)}$, are equivalent to the standard norms $\norm{\cdot}_{L^2(\Omega)}$ and $\norm{\cdot}_{H_0(\cu;\Omega)}$, respectively.

Now let us define the function spaces which we will be using from this point on.  Combining the conditions in \eqref{def:FuncCond} results in  the following spaces 
\begin{align}
	\nonumber
	H^{\cu;1}_{0;0,}(Q) &:=  \{\ \vec u\in L^2(Q,\R^d) \ | \  \partial_t \vec u\in L^2_{\varepsilon}(Q),\ \cu_x \vec u\in L^2_{\mu}(Q),\ \vec u(0,x)= 0 \textmd{ for } x\in \Omega,\\
	\label{def:spaceHC10,}
	&\hspace*{1cm} \gamma_t \vec u = 0 \textmd{ on }\Sigma = (0,T)\times\partial \Omega \   \}\\
	\nonumber
	&= L^2(0,T;H_0(\cu;\Omega))\cap H^1_{0,}(0,T;L^2(Q,\R^d)),\\
	\nonumber
	H^{\cu;1}_{0;,0}(Q) &:=  \{\ \vec u\in L^2(Q,\R^d) \ | \  \partial_t\vec  u\in L^2_{\varepsilon}(Q),\ \cu_x \vec u\in L^2_{\mu}(Q),\ \vec u(T,x)= 0 \textmd{ for } x\in \Omega,\\
	\label{def:spaceHC1,0}
	&\hspace*{1cm} \gamma_t \vec u = 0 \textmd{ on }\Sigma \   \}\\
	\nonumber
	&= L^2(0,T;H_0(\cu;\Omega))\cap H^1_{,0}(0,T;L^2(Q,\R^d)).
\end{align}
The subscript '$0,$' and '$,0$' stands for zero initial and zero end conditions. In Lemma~\ref{lem:space_time_HR} we will see that they are well defined.

	Since $ H^{\cu;1}_{0;}(Q) $ is the kernel of a bounded space-time trace it is quite natural to assume that  $ H^{\cu;1}_{0;}(Q) $ is a Hilbert space. We will quickly discuss whether this is indeed true and we state in which concept $C^\infty(Q)$-functions are dense in our space. We end up looking at the  seminorm in this setting which is a good tool for numerical analysis.\\
So let us start with showing that  $H^{\cu;1}_{0;0,}(Q)$ and $H^{\cu;1}_{0;,0}(Q)$ are Hilbert spaces.

\begin{lem}\label{lem:C0inf_dense}
	Let $d=2,3$. The space $	H^1(0,T;L^2(\Omega;\R^d)) $ is isometric to the Hilbert tensor product $H^1(0,T)\hat{\tens}L^2(\Omega;\R^d)$ and the space $L^2(0,T;H(\cu;\Omega))$ is isometric to the Hilbert tensor product $ L^2(0,T)\hat{\tens}H(\cu;\Omega)$.\\
	Addtionally the space $C^\infty(0,T){\tens} [C^\infty(\overline{\Omega})]^d$ is dense in
	\begin{align*}
		H^1(0,T)\hat{\tens}L^2(\Omega;\R^d)\ \mathrm{ and }\	L^2(0,T)\hat{\tens}H(\cu;\Omega)
	\end{align*}
	and $C^\infty([0,T]){\tens} [C_0^\infty(\Omega)]^d $ is dense in $	L^2(0,T)\hat{\tens}H_0(\cu;\Omega).$
\end{lem}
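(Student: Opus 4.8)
The plan is to prove the two claims in sequence: first the isometric identifications of Bochner spaces with Hilbert tensor products, then the density statements.

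\textbf{Step 1: The tensor product identifications.} For the identification $H^1(0,T;L^2(\Omega;\R^d)) \cong H^1(0,T)\hat{\tens}L^2(\Omega;\R^d)$, I would start from the standard fact (see, e.g., the cited reference \cite{zeidler}) that for a separable Hilbert space $X$ one has $L^2(0,T;X)\cong L^2(0,T)\hat{\tens}X$, the isometry sending an elementary tensor $f\otimes v$ to the function $t\mapsto f(t)v$. Applying this with $X = L^2(\Omega;\R^d)$ gives the $L^2$-in-time case, and applying it with $X = H(\cu;\Omega)$ gives the second stated identification $L^2(0,T;H(\cu;\Omega))\cong L^2(0,T)\hat{\tens}H(\cu;\Omega)$; here one uses that $H(\cu;\Omega)$ is a separable Hilbert space. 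For $H^1(0,T;L^2(\Omega;\R^d))$ one notes that $u\in H^1(0,T;X)$ iff $u\in L^2(0,T;X)$ with a weak derivative $\partial_t u\in L^2(0,T;X)$, and the map $u\mapsto (u,\partial_t u)$ embeds $H^1(0,T;X)$ isometrically (up to the equivalent graph norm) into $L^2(0,T;X)\times L^2(0,T;X)\cong (L^2(0,T)\hat\tens X)^2$; one then checks the image corresponds exactly to $H^1(0,T)\hat\tens X$ by matching the graph-norm inner product with the tensor inner product on elementary tensors $f\otimes v$ with $f\in H^1(0,T)$, $v\in X$, and extending by density and continuity. Crucially one needs that finite sums $\sum f_i\otimes v_i$ with $f_i\in H^1(0,T)$ are dense in $H^1(0,T;X)$, which follows because simple functions (or piecewise-polynomial-in-time functions) with values in $X$ are dense.

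\textbf{Step 2: The density statements.} For a Hilbert tensor product $Y\hat\tens Z$, the algebraic tensor product of dense subsets $D_Y\subset Y$ and $D_Z\subset Z$ is dense in $Y\hat\tens Z$; this is a standard fact about completed tensor products, proved by approximating a general element first by a finite sum of elementary tensors (dense by construction of $\hat\tens$), then approximating each factor separately using continuity of the bilinear map $(y,z)\mapsto y\otimes z$ and the triangle inequality. Thus it suffices to invoke: $C^\infty(0,T)$ is dense in both $L^2(0,T)$ and $H^1(0,T)$ (standard one-dimensional Sobolev density, noting $C^\infty$ here means smooth up to the closed interval or at least with the requisite integrability); $C^\infty([0,T])$ dense in $H^1(0,T)$ and in $L^2(0,T)$; $[C^\infty(\overline\Omega)]^d$ is dense in $L^2(\Omega;\R^d)$ trivially and dense in $H(\cu;\Omega)$ by the classical density result for $H(\cu)$ on Lipschitz domains (see \cite{Monk}); and $[C_0^\infty(\Omega)]^d$ is dense in $H_0(\cu;\Omega)$ by definition/construction of that space as the closure of $[C_0^\infty(\Omega)]^d$ in the $H(\cu)$-norm (again \cite{Monk}). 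Combining these via the tensor-density lemma yields all three claimed density results.

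\textbf{Main obstacle.} The routine parts are the one-dimensional and spatial density facts, which are all citable. The one genuinely delicate point is making the $H^1$-in-time tensor identification precise: one must be careful that $H^1(0,T)\hat\tens X$ — defined as the completion of the algebraic tensor product under the cross norm coming from the $H^1(0,T)$ and $X$ inner products — really coincides with the Bochner–Sobolev space $H^1(0,T;X)$ and not merely embeds into it, i.e.\ that the weak time-derivative of a tensor-limit agrees with the limit of the derivatives. This is where I would spend the most care: showing that the closed graph (or rather the completeness of $H^1(0,T;X)$ together with density of elementary tensors) forces surjectivity of the natural isometry. Everything else is assembling standard ingredients.
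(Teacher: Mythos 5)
Your proof is correct and takes essentially the same route as the paper, which simply cites Aubin (Thm.~12.6.1 and 12.7.1) for the tensor-product identifications and Weidmann for the fact that algebraic tensor products of dense subsets are dense in the Hilbert tensor product; you are filling in the details that the paper delegates to those references, and your identified "main obstacle" (surjectivity of the natural isometry onto $H^1(0,T;X)$) is indeed the only point needing care. Two small cautions: simple (piecewise constant) functions are not elements of $H^1(0,T;X)$, so for density of finite sums of elementary tensors you must rely on your piecewise-polynomial alternative (or, more cleanly, expand $u(t)=\sum_n c_n(t)\,e_n$ in an orthonormal basis of $X$, note $\|u\|^2_{H^1(0,T;X)}=\sum_n\|c_n\|^2_{H^1(0,T)}$, and truncate); and the paper defines $H_0(\cu;\Omega)$ as the kernel of the tangential trace rather than as a closure, so the density of $[C_0^\infty(\Omega)]^d$ there is not true "by definition" but is the classical equivalence of the two characterizations on Lipschitz domains, citable from Monk.
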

See Aubin \cite{aubin}, Thm.~12.7.1 and Thm.~12.6.1. . The second result follows with \cite[ Ch 1.6.]{weidmann}.  Here, $\tens$  denotes the tensor product and $\hat{\tens}$ the tensor product of Hilbert spaces, where the resulting space is a Hilbert space itself. Hence we get that  {$C^\infty(0,T;C^\infty(\Omega)^d)$}  is  dense in both spaces $H^1(0,T;L^2(\Omega;\R^d))$ and $L^2(0,T;H(\cu;\Omega))$ and therefore is dense in the intersections $H^{\cu;1}_{0;0,}(Q)$ and $H^{\cu;1}_{0;,0}(Q)$.\\
\begin{lem}\label{lem:space_time_HR}
	The spaces $H^{\cu;1}_{0;0,}(Q)$ and $H^{\cu;1}_{0;,0}(Q)$ are Hilbert spaces equipped with the inner product 
	\begin{align}
		\label{eqn:scalar_prod}
		(\vec u, \vec v)_{H^{\cu;1}(Q)} := (\vec u,\vec v)_{L^2(Q)} +(\partial_t \vec u,\partial_t \vec v)_{L^2_{\varepsilon}(Q)}+(\cu_x \vec u,\cu_x \vec v)_{L^2_{\mu}(Q)}
	\end{align}
	for all $ \vec u,\vec v \in H^{\cu;1}_{0;}(Q)$. Additionally the initial and end conditions are well defined in $H^{\cu;1}_{0;}(Q)$
\end{lem}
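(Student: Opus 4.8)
The plan is to verify the two assertions of Lemma~\ref{lem:space_time_HR} in order: first that \eqref{eqn:scalar_prod} defines an inner product turning $H^{\cu;1}_{0;0,}(Q)$ and $H^{\cu;1}_{0;,0}(Q)$ into \emph{complete} spaces, and second that the pointwise-in-time traces $\vec u(0,\cdot)$ and $\vec u(T,\cdot)$ (respectively $\gamma_t\vec u$ on $\Sigma$) make sense, so that the zero conditions in the definitions \eqref{def:spaceHC10,}--\eqref{def:spaceHC1,0} are meaningful. Since both spaces are treated identically up to swapping the roles of $t=0$ and $t=T$, I would carry out the argument for $H^{\cu;1}_{0;0,}(Q)$ and remark that the other case is analogous.

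For the inner product: bilinearity and symmetry of \eqref{eqn:scalar_prod} are immediate from bilinearity and symmetry of the three constituent $L^2$-type inner products, and positive-definiteness follows because $(\vec u,\vec u)_{H^{\cu;1}(Q)}\ge\norm{\vec u}_{L^2(Q)}^2$, so $(\vec u,\vec u)_{H^{\cu;1}(Q)}=0$ forces $\vec u=0$; here I use Assumption~\ref{A:Assumptions_all} (uniform positive definiteness and boundedness of $\epsilon$ and $\mu$) to know $\norm{\cdot}_{L^2_\epsilon(Q)}$ and $\norm{\cdot}_{L^2_\mu(Q)}$ are genuine norms equivalent to the standard $L^2$-norms, as already noted in the text. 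For completeness I would observe that the norm induced by \eqref{eqn:scalar_prod} is, by this equivalence of weighted and unweighted norms, equivalent to the graph norm of the intersection space $L^2(0,T;H_0(\cu;\Omega))\cap H^1(0,T;L^2(Q;\R^d))$. An intersection of two Hilbert spaces that both embed continuously into the common Hausdorff space $L^2(Q;\R^d)$ is itself a Hilbert space under the sum of the squared norms; concretely, if $(\vec u_n)$ is Cauchy in $H^{\cu;1}(Q)$ then it is Cauchy in $L^2(Q)$, in $H^1(0,T;L^2(Q;\R^d))$ and in $L^2(0,T;H_0(\cu;\Omega))$ separately, hence converges to limits in each, which must coincide in $L^2(Q)$; closedness of the differential operators $\partial_t$ and $\cu_x$ (as operators with values in $L^2$) then identifies the common limit $\vec u$ as an element with $\partial_t\vec u\in L^2_\epsilon(Q)$ and $\cu_x\vec u\in L^2_\mu(Q)$, and $\vec u_n\to\vec u$ in the $H^{\cu;1}(Q)$-norm.

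For well-definedness of the initial/end conditions: the key structural fact is the identification in \eqref{def:spaceHC10,}, namely that membership (before imposing the trace condition) is equivalent to $\vec u\in L^2(0,T;H_0(\cu;\Omega))\cap H^1(0,T;L^2(Q;\R^d))$. Since $H^1(0,T;L^2(\Omega;\R^d))\hookrightarrow C([0,T];L^2(\Omega;\R^d))$ by the standard one-dimensional Sobolev embedding (applied over the Hilbert space $L^2(\Omega;\R^d)$, cf.~the reference to \cite{zeidler} in the text), the evaluations $\vec u\mapsto\vec u(0,\cdot)$ and $\vec u\mapsto\vec u(T,\cdot)$ are continuous linear maps into $L^2(\Omega;\R^d)$, so the conditions $\vec u(0,\cdot)=0$ and $\vec u(T,\cdot)=0$ are meaningful and, moreover, define \emph{closed} subspaces — which re-confirms that $H^{\cu;1}_{0;0,}(Q)$ and $H^{\cu;1}_{0;,0}(Q)$ are closed subspaces of the full intersection space and hence Hilbert in their own right. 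Similarly, $\vec u\in L^2(0,T;H_0(\cu;\Omega))$ means $\vec u(t,\cdot)\in H_0(\cu;\Omega)$, i.e.\ $\gamma_t\vec u(t,\cdot)=0$ in $H^{-1/2}(\partial\Omega)$, for a.e.\ $t$, so the lateral condition $\gamma_t\vec u=0$ on $\Sigma$ is encoded in the space and needs no separate justification beyond continuity of $\trt$ recalled earlier.

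The main obstacle is not any single computation but assembling the intersection-space argument cleanly: one must be careful that the two Hilbert spaces being intersected are topological subspaces of one common ambient space $L^2(Q;\R^d)$ (so that "the limits agree" is a legitimate statement), and that the differential operators $\partial_t$ and $\cu_x$ are closed, so that Cauchy-ness in the graph norms transfers to the limit. Everything else — bilinearity, symmetry, positivity, the equivalence of weighted and standard norms via Assumption~\ref{A:Assumptions_all}, and the temporal trace embedding — is routine and can be stated with brief justification and the references already cited (\cite{aubin,weidmann,zeidler,Monk}). I would close by noting that Lemma~\ref{lem:C0inf_dense} is not needed for this lemma but will be used later; the proof of Lemma~\ref{lem:space_time_HR} stands on the embedding $H^1(0,T;X)\hookrightarrow C([0,T];X)$ and the closedness of $\partial_t$ and $\cu_x$ alone.
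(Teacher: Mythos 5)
Your proof is correct and follows essentially the same route as the paper: completeness of the intersection space (you argue it directly via Cauchy sequences and closedness of $\partial_t$ and $\cu_x$, where the paper instead invokes the compatible-couple theorem of Bennett--Sharpley after embedding both constituent spaces into $L^2(Q;\R^d)$), followed by the embedding $H^1(0,T;L^2(\Omega;\R^d))\hookrightarrow C([0,T];L^2(\Omega;\R^d))$ to make the temporal traces continuous, so that the zero initial/end conditions cut out closed subspaces. The substitution of an explicit closed-operator argument for the interpolation-theory citation is a cosmetic difference, not a different approach.
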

\begin{proof}
	Since the embedding $H^1(0,T;L^2(\Omega;\R^d)) \subset C([0,T], L^2(\Omega;\R^d))$ is continuous, see \cite[Prop 23.23]{zeidler},  we get that $
	H^1(0,T;L^2(\Omega;\R^d))\subset L^2(Q;\R^d)$	is continuously embedded, see \cite[Prop 23.2]{zeidler}, and since $H_0(\cu;\Omega) \subset L^2(Q;\R^d)$ that $
	L^2(0,T;H_0(\cu;\Omega))\subset L^2(Q;\R^d)$ is continuously embedded, see \cite[Prop 23.2]{zeidler}. We know that  $L^2(Q;\R^d)$ is continuously embedded in the Hausdorff space $\mathcal{M}_0$, see \cite[Thm.~I.1.4]{bennett},  where space $\mathcal{M}_0$ consists of all Lebesgue measurable functions defined on $Q$ that are finite a.e..  Therefore the pair $(H^1(0,T;L^2(\Omega;\R^d)), L^2(0,T;H_0(\cu;\Omega)))$ is a compatible couple,  while a compatible pair consists of two linear subspaces of a larger vector space. In that case the intersection of both spaces is again a linear subspace of the larger space and a Banach space with the maximum of both norms, i.e. $	H^1(0,T;L^2(\Omega;\R^d))\cap L^2(0,T;H_0(\cu;\Omega))$	is a Banach space with the norm
	\begin{align*}
		\| \vec u\|_{H^1(0,T;L^2(\Omega;\R^d))\cap L^2(0,T;H_0(\cu;\Omega))} = \max \{	\| \vec  u\|_{H^1(0,T;L^2(\Omega;\R^d))},	\|\vec  u\|_{ L^2(0,T;H_0(\cu;\Omega))} \},
	\end{align*}
	see \cite[Thm.~III.1.3]{bennett}. Now, \eqref{eqn:scalar_prod} defines a norm that is induced by the inner product and it is equivalent to the above norm. Hence $H^{\cu;1}_{0;}(Q)$ is a Hilbert space.\\
	Because the embedding $H^1(0,T;L^2(\Omega;\R^d)) \subset C([0,T], L^2(\Omega;\R^d))$ is continuous we get well defined and continuous traces to the boundaries $\{t=0\}\times \Omega$ and $\{t=T\}\times \Omega$ for $H^{\cu;1}_{0;}(Q)$. Since the spaces $H^{\cu;1}_{0;0,}(Q)$ and $H^{\cu;1}_{0;,0}(Q)$ are the kernels of these traces, we derive that they are closed subsets of the Hilbert space $	H^{\cu;1}_{0;}(Q)$.
\end{proof}

\begin{lem}\label{lem:friedrich}
	For the spaces $H^{\cu;1}_{0;0,}(Q)$ and $H^{\cu;1}_{0;,0}(Q)$ there  exists  $c_f>0$ such that
	\begin{align*}
		\|\vec u\|_{L^2(Q)} \leq c_f \|\partial_t \vec u\|_{L^2(Q)} 
	\end{align*}
	for all $\vec u\in H^{\cu;1}_{0;0,}(Q) $ or $\vec u\in H^{\cu;1}_{0;,0}(Q), $ $d=2,3$. Therefore the  seminorm
	\begin{align*}
		|u|^2_{H^{\cu;1}(Q)} := \|\partial_t\vec u\|^2_{L^2_\varepsilon(Q)}+\|\cu_x \vec u\|^2_{L^2_\mu(Q)},
	\end{align*}
	is an equivalent norm in $H^{\cu;1}_{0;0,}(Q)$ and $H^{\cu;1}_{0;,0}(Q)$.
\end{lem}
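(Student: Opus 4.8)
The plan is to establish the Friedrichs-type inequality by the fundamental theorem of calculus in the time variable, and then obtain the norm equivalence as a short consequence of it together with the coercivity constants supplied by Assumption~\ref{A:Assumptions_all}.

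First I would view $\vec u \in H^{\cu;1}_{0;0,}(Q)$ as an element of $H^1(0,T;L^2(\Omega;\R^d))$. By the continuous embedding $H^1(0,T;L^2(\Omega;\R^d))\hookrightarrow C([0,T],L^2(\Omega;\R^d))$ — already used in the proof of Lemma~\ref{lem:space_time_HR} — the map $t\mapsto \vec u(t,\cdot)$ is absolutely continuous with values in $L^2(\Omega;\R^d)$, the trace $\vec u(0,\cdot)=0$ is meaningful, and $\vec u(t,\cdot)=\int_0^t \partial_s\vec u(s,\cdot)\,\mathrm ds$ holds as a Bochner integral in $L^2(\Omega;\R^d)$. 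Taking the $L^2(\Omega)$-norm and applying Cauchy--Schwarz in $s$ gives $\norm{\vec u(t,\cdot)}_{L^2(\Omega)}^2 \le t\int_0^t \norm{\partial_s\vec u(s,\cdot)}_{L^2(\Omega)}^2\,\mathrm ds$; integrating over $t\in(0,T)$ and using Fubini yields $\norm{\vec u}_{L^2(Q)}^2 \le \tfrac{T^2}{2}\norm{\partial_t\vec u}_{L^2(Q)}^2$, i.e.\ the claimed bound with, say, $c_f=T/\sqrt2$. For $\vec u\in H^{\cu;1}_{0;,0}(Q)$ the identical argument applies with the representation $\vec u(t,\cdot)=-\int_t^T\partial_s\vec u(s,\cdot)\,\mathrm ds$ anchored at $t=T$. (Alternatively, one may first prove the identity for $\vec u$ in the dense subspace $C^\infty([0,T])\hat{\tens}[C_0^\infty(\Omega)]^d$ from Lemma~\ref{lem:C0inf_dense} and then pass to the limit.)

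For the norm equivalence I would combine this inequality with the uniform positive definiteness of $\epsilon$. One direction is trivial: $|\vec u|_{H^{\cu;1}(Q)}^2\le \norm{\vec u}_{H^{\cu;1}(Q)}^2$ by the definition in \eqref{eqn:scalar_prod}. For the converse, decompose $\norm{\vec u}_{H^{\cu;1}(Q)}^2=\norm{\vec u}_{L^2(Q)}^2+|\vec u|_{H^{\cu;1}(Q)}^2$, bound $\norm{\vec u}_{L^2(Q)}^2\le c_f^2\norm{\partial_t\vec u}_{L^2(Q)}^2\le \frac{c_f^2}{\epsilon_{\min}}\norm{\partial_t\vec u}_{L^2_\epsilon(Q)}^2\le \frac{c_f^2}{\epsilon_{\min}}|\vec u|_{H^{\cu;1}(Q)}^2$, and conclude $\norm{\vec u}_{H^{\cu;1}(Q)}^2\le \bigl(1+c_f^2/\epsilon_{\min}\bigr)|\vec u|_{H^{\cu;1}(Q)}^2$. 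Hence $|\cdot|_{H^{\cu;1}(Q)}$ and $\norm{\cdot}_{H^{\cu;1}(Q)}$ are equivalent norms on both $H^{\cu;1}_{0;0,}(Q)$ and $H^{\cu;1}_{0;,0}(Q)$, which is the assertion.

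I do not expect a genuine obstacle here: the only delicate point is justifying the pointwise-in-time integral representation and the vanishing trace at $t=0$ (respectively $t=T$), and this is precisely what the continuous embedding into $C([0,T],L^2(\Omega;\R^d))$ provides, exactly as exploited in Lemma~\ref{lem:space_time_HR}. Everything else reduces to Cauchy--Schwarz, Fubini, and the constants $\epsilon_{\min},\mu_{\min},\epsilon^{\max},\mu^{\max}$ furnished by Assumption~\ref{A:Assumptions_all}.
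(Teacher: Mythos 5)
Your argument is correct and follows essentially the same route as the paper: the paper simply invokes the Poincar\'e inequality in $H^1(0,T)$ together with the structure of the $\norm{\cdot}_{H^1(0,T;X)}$ norm (deferring details to a reference), and your fundamental-theorem-of-calculus plus Cauchy--Schwarz and Fubini computation is precisely the standard proof of that inequality carried out for Bochner-valued functions, yielding the explicit constant $c_f = T/\sqrt{2}$. The concluding norm-equivalence step via $\epsilon_{\min}$ is likewise correct.
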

This can be  proved simply by using the Poincaré-inequality in $H^1(0,T)$ and the structure of the norm $\norm{.}_{H^1(0,T;X)}$, see e.g.~\cite[Sec.~4.1]{HauserZank2023} for more details.

\subsubsection{Basis representations}\label{sec:BasisRep}
To derive a basis representation in $H^{\cu;1}_{0;}(Q)$ we first have to state how we decompose  $ H_0(\cu;\Omega)$. The basis representation of  $H^{\cu;1}_{0;}(Q)$  will be used in the proof of the main theorem on uniqueness and solvability. To understand the proof better, we will quickly derive the decomposition.
 Let us define the subspace
	\begin{equation*}
		H(\di \epsilon 0;\Omega) := \left\{ \vec f \in L^2(\Omega;\R^d) : \, \di_x (\epsilon \vec f) = 0 \right\} \subset L^2(\Omega;\R^d)
	\end{equation*}
	endowed with the weighted inner product $(\cdot, \cdot)_{L^2_\epsilon(\Omega)}$, whereas the subspace
	\begin{equation*}
		X_{0,\epsilon}(\Omega) := H_0(\cu;\Omega) \cap H(\di \epsilon 0;\Omega) \subset H_0(\cu;\Omega)
	\end{equation*}
	is equipped with the inner product $(\cdot, \cdot)_{H_{0,\epsilon,\mu}(\cu;\Omega)}$. With this notation, we recall a crucial decomposition result, the Helmholtz-Weyl decomposition \eqref{def:Helmholtz} of $H_0(\cu;\Omega)$.
	\begin{lem} \label{Lem:Helmholtz}
		Let Assumption~\ref{A:Assumptions_all} be satisfied. Then, the the orthogonal decomposition
		\begin{equation} \label{def:Helmholtz}
			H_0(\cu;\Omega) = \nabla_x H^1_0(\Omega) \oplus X_{0,\epsilon}(\Omega)
		\end{equation}
		is true, where the orthogonality holds true with respect to $(\cdot, \cdot)_{H_{0,\epsilon,\mu}(\cu;\Omega)}$, $(\cdot, \cdot)_{H_{0,\mu}(\cu;\Omega)}$ and $(\cdot, \cdot)_{L^2_\epsilon(\Omega)}$.
	\end{lem}
	\begin{proof}
		For $d=3$, the result is stated in \cite[Proposition~7.4.3]{Assous2018}. Additionally, for $d \in \{2,3\}$, the decompositions follow from properties of the corresponding de Rham complexes, see e.g. \cite[Section~2.3]{ArnoldFalkWinther}, \cite[Lemma~2.7, Lemma~3.6, Theorem~5.5]{PaulHilbertComplexes}.
	\end{proof}

\begin{lem}\label{lem:FS-H_0curl}
	The space $H_0(\cu;\Omega)$ has a fundamental system $\{\vec \varphi_k\}_{k\in\Z}$ which is orthonormal in the  $L^2_{\varepsilon}(\Omega)$-product. Additionally  $\{\vec \varphi_k\}_{k\in\Z}$  is constructed in such a way that for every $k\in \N_0$ there exists a  $\lambda_k> 0$ such that
	\begin{align*}
		(\cu_x \vec \varphi_k,\cu_x \vec v)_{L^2_\mu(\Omega)} =  \lambda_k (\vec \varphi_k, \vec v)_{L^2_\varepsilon(\Omega)}
	\end{align*}
	for all $ \vec v\in X_{0,\varepsilon}(\Omega)$ and for $k\in \Z\backslash\N_0$ there exists a $\lambda_k>0$ and $\phi_k\in H^1_0(\Omega)$ such that $\vec \varphi_k = \nabla_x \phi_k$ and 
	\begin{align*}
		(\nabla_x\phi_k, \nabla_x v)_{L^2_\varepsilon(\Omega)} = \lambda_{k} (\phi_k, v)_{L^2_\varepsilon(\Omega)}.
	\end{align*}
	for all	$ v\in H^1_0(\Omega)$.
\end{lem}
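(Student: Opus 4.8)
The plan is to construct the fundamental system separately on each summand of the Helmholtz--Weyl decomposition \eqref{def:Helmholtz} and then take the union. On the gradient part $\nabla H^1_0(\Omega)$ the claim is essentially the classical spectral theorem applied to the Dirichlet Laplacian: I would consider the eigenvalue problem of finding $\phi_i \in H^1_0(\Omega)$ and $\lambda_i > 0$ with $(\nabla_x \phi_i, \nabla_x v)_{L^2_\epsilon(\Omega)} = \lambda_i (\phi_i, v)_{L^2_\epsilon(\Omega)}$ for all $v \in H^1_0(\Omega)$. Because $H^1_0(\Omega) \hookrightarrow L^2(\Omega;\R^d)$ is compact (Rellich, using that $\Omega$ is a bounded Lipschitz domain by Assumption~\ref{A:Assumptions_all}) and the bilinear form $(\nabla_x \cdot, \nabla_x \cdot)_{L^2_\epsilon(\Omega)}$ is symmetric, bounded and coercive on $H^1_0(\Omega)$ (coercivity from the Poincaré inequality together with $\epsilon_{\min} > 0$), the spectral theorem for the associated compact self-adjoint solution operator yields a countable family $\{\phi_i\}$, $i \in \Z\setminus\N_0$, with positive eigenvalues $\lambda_i$ that is orthonormal in $(\cdot,\cdot)_{L^2_\epsilon(\Omega)}$ and whose span is dense in $H^1_0(\Omega)$ (hence, after applying $\nabla_x$, the $\varphi_i := \nabla_x \phi_i$ are $L^2_\epsilon$-orthonormal and dense in $\nabla H^1_0(\Omega)$ with respect to both the $L^2_\epsilon$- and, by \eqref{def:Helmholtz}, the curl-inner products).

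On the divergence-free part $X_{0,\epsilon}$ I would set up the eigenvalue problem of finding $\varphi_i \in X_{0,\epsilon}$, $\lambda_i > 0$, with $(\cu_x \varphi_i, \cu_x v)_{L^2_\mu(\Omega)} = \lambda_i (\varphi_i, v)_{L^2_\epsilon(\Omega)}$ for all $v \in X_{0,\epsilon}$. The essential input here is that the embedding $X_{0,\epsilon} \hookrightarrow L^2(\Omega;\R^d)$ is compact; this is the Weber--Picard compactness result for $H_0(\cu)\cap H(\di_\epsilon)$-type spaces on Lipschitz domains, which I would invoke from \cite{Monk} for $d=3$ and from the functional-analytic framework of \cite{PaulHilbertComplexes} (the same reference already cited for the decomposition) for $d=2$. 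On $X_{0,\epsilon}$ the form $a(u,v) := (u,v)_{L^2_\epsilon(\Omega)} + (\cu_x u, \cu_x v)_{L^2_\mu(\Omega)}$ is an equivalent inner product (it is the restriction of $(\cdot,\cdot)_{H_{0,\epsilon,\mu}(\cu;\Omega)}$), so $X_{0,\epsilon}$ is a Hilbert space; the solution operator sending $f \in L^2_\epsilon$ to the $a$-Riesz representative in $X_{0,\epsilon}$, composed with the compact embedding, is compact and self-adjoint, and the spectral theorem gives an $L^2_\epsilon$-orthonormal family $\{\varphi_i\}_{i\in\N_0}$, complete in $X_{0,\epsilon}$, with eigenvalues $\mu_i \in (0,1]$ of $a$; setting $\lambda_i := (1-\mu_i)/\mu_i \ge 0$ translates this into the stated relation. (If $\lambda_i = 0$ can occur one notes it corresponds to $\cu_x \varphi_i = 0$; since such fields lie in $\nabla H^1_0$ on a domain where the decomposition is as in \eqref{def:Helmholtz}, one may either assume topological triviality of $\Omega$ so the harmonic fields vanish, or simply absorb any finite-dimensional kernel and relabel so that all remaining $\lambda_i > 0$.)

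Finally I would take $\{\varphi_i\}_{i\in\Z}$ to be the union of the two families. Orthonormality in $L^2_\epsilon(\Omega)$ across the two families is immediate from the $L^2_\epsilon$-orthogonality in \eqref{def:Helmholtz}, and completeness in $H_0(\cu;\Omega)$ follows because each family is complete in its summand and the decomposition is an orthogonal direct sum in the $H_{0,\epsilon,\mu}(\cu;\Omega)$-inner product. I expect the main obstacle to be pinning down the compactness of the embedding $X_{0,\epsilon} \hookrightarrow L^2(\Omega;\R^d)$ with the correct hypotheses and references for both $d=2$ and $d=3$ on a general Lipschitz domain — everything else is a routine application of the spectral theorem — and, relatedly, being careful about whether the eigenvalues on the divergence-free part are strictly positive, which is where the geometric assumptions on $\Omega$ enter.
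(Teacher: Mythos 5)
Your plan is correct and follows essentially the same route as the paper: split via the Helmholtz--Weyl decomposition \eqref{def:Helmholtz}, solve the Dirichlet--Laplace eigenvalue problem on $\nabla H^1_0(\Omega)$ and the curl--curl eigenvalue problem (equivalently, the $(\cdot,\cdot)_{H_{0,\epsilon,\mu}(\cu;\Omega)}$-eigenvalue problem with eigenvalues $1+\lambda_i$) on $X_{0,\epsilon}$, then take the union. The only difference is one of emphasis: you make the compact embedding $X_{0,\epsilon}\hookrightarrow L^2(\Omega;\R^d)$ and the positivity of the $\lambda_i$ explicit, whereas the paper delegates the former to \cite[Thm.~8.2.4]{Assous2018} and settles the latter with the Poincar\'e--Steklov inequality, which amounts to the same geometric/topological input you identify.
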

\begin{proof}
	For $\nabla H^1_0(\Omega) $ we get an orthonormal basis from the Laplace eigenvalue problem: \\
	Find  $(\lambda_k,\phi_k) \in (\R, H^1_0(\Omega))$, $ k \in \Z\backslash\N_0$, such that for all $ v\in H^1_0(\Omega)$
	\begin{equation*}
		(\nabla_x \phi_k, \nabla_x v)_{L^2_\varepsilon(\Omega)} = \lambda_k (\phi_k, v)_{L^2_{\varepsilon}(\Omega)}\quad \textmd{and} \quad \norm{\nabla_x\phi_k}_{L^2_\varepsilon(\Omega)}= 1.
	\end{equation*}
	The solution to the eigenvalue problem is a non-decreasing sequence of related eigenvalues $\lambda_k > 0$, satisfying $\lambda_k \to \infty$ as $k \to -\infty,$ see \cite[Section~4 in Chapter~4]{Ladyzhenskaya1985}. 
	
	Next, we investigate the eigenvalue problem:\\
	Find  $(\lambda_k,\vec \varphi_k) \in (\R,X_{0,\varepsilon}(\Omega))$, $ k \in \N_0$, such that 	for all $ \vec v \in X_{0,\varepsilon}(\Omega)$
	\begin{equation} \label{VF:Eigen_curl}
		(\vec  \varphi_k, \vec v)_{H_{0,\varepsilon,\mu}(\cu;\Omega)} = (1+\lambda_k) (\vec \varphi_k, \vec v)_{L^2_\varepsilon(\Omega)} \quad \textmd{and} \quad\norm{\vec \varphi_k}_{L^2_\varepsilon(\Omega)}= 1.
	\end{equation}
	The set of eigenfunctions $\{ \vec \varphi_k \in X_{0,\varepsilon}(\Omega) : \, k \in \N_0 \}$ form an orthonormal basis of $ H(\di \varepsilon 0;\Omega)$ with respect to $(\cdot, \cdot)_{L^2_\varepsilon(\Omega)}$. Additionally the nondecreasing sequence of related eigenvalues $ (1+\lambda_k)$, satisfying $\lambda_k \to \infty$ as $k \to \infty,$ see \cite[Theorem~8.2.4]{Assous2018}. Note that $\lambda_k>0$, $k\in \N_0$. This can be shown by estimating
	\begin{equation*}
		0 < c_P(\vec \varphi_k, \vec \varphi_k)_{L^2_\varepsilon(\Omega)} \leq (\cu_x\vec \varphi_k,\cu_x \vec \varphi_k)_{L^2_\mu(\Omega)} =  (\vec \varphi_k, \vec \varphi_k)_{H_{0,\varepsilon,\mu}(\cu;\Omega)} - (\vec \varphi_k, \vec \varphi_k)_{L^2_\varepsilon(\Omega)}= \lambda_k
	\end{equation*}
	using the Poincar\'{e}-Steklov inequality, see e.g. \cite[Lem.~44.4]{ErnGuermond2020II}, and  the  variational formulation~\eqref{VF:Eigen_curl} for $\vec v = \vec \varphi_k$ to get the desired result.
	
	Moreover, the set $\{( 1+\lambda_k)^{-1/2} \vec \varphi_k \in X_{0,\varepsilon}(\Omega): \, k\in \N_0 \}$ is an orthonormal basis of $X_{0,\varepsilon}(\Omega)$ with respect to $(\cdot, \cdot)_{H_{0,\varepsilon,\mu}(\cu;\Omega)}$ by construction, see \eqref{VF:Eigen_curl}, and since $X_{0,\varepsilon}(\Omega)\subset  H(\di \varepsilon 0;\Omega)$. Additionally, we see that the set $\{ \vec e_j \in X_{0,\varepsilon}(\Omega) : \, j \in \N_0 \}$ is also orthogonal with respect to $(\cu_x \cdot,\cu_x  \cdot)_{L^2_\mu(\Omega)}$ since it is an equivalent norm in $X_{0,\varepsilon}(\Omega)$ because of the Poincar\'{e}-Steklov inequality, see  \cite[Lem.~44.4]{ErnGuermond2020II}.\\
	Then, by using  Lem.~\ref{Lem:Helmholtz} we arrive at the desired orthonormal basis of $ H_0(\cu;\Omega)$ with the set $\{ \nabla \phi_k\}_{ k \in \Z\backslash\N_0 }\cup\{( 1+\lambda_k)^{-1/2} \vec \varphi_k\}_{ k \in\N_0 } $, which is orthogonal with respect to $(\cdot, \cdot)_{H_{0,\varepsilon,\mu}(\cu;\Omega)}$.	
\end{proof}
Now that we know the fundamental system of $H_0(\cu;\Omega)$ we can write $\vec w \in H_0(\cu;\Omega)$ as
\begin{equation*}
	\vec w(x) = \sum_{k=-\infty}^\infty w_k \vec \varphi_k(x), \quad x \in \Omega,
\end{equation*}
with the coefficients $w_k = (\vec w, \vec \varphi_k)_{L^2_\varepsilon(\Omega)}, \, k \in \Z.$ This basis representation converges in $H_0(\cu;\Omega).$  Then the seminorm $\abs{\cdot}_{H_{0,\mu}(\cu;\Omega)}$ and the norm $\norm{\cdot}_{H_{0,\varepsilon,\mu}(\cu;\Omega)}$ admit the representations
\begin{equation} \label{VF:H0mucurl}
	\abs{\vec w}_{H_{0,\mu}(\cu;\Omega)}^2 = \sum_{k=0}^\infty \underbrace{\lambda_k}_{> 0} \abs{w_k}^2, \quad  \norm{\vec w}_{H_{0,\varepsilon,\mu}(\cu;\Omega)}^2 = \sum_{k=0}^\infty (1+\lambda_k) \abs{w_k}^2 + \sum_{k=1}^\infty \abs{w_{-k}}^2.
\end{equation}
Let $\vec v\in H^{\cu;1}_{0;}(Q)$. Then we learn from \cite[Prop.~23.23]{zeidler} that $\vec v$ coincides on $[0,T]$ with a continuous mapping $ \vec v:[0,T]\to H_0(\cu;\Omega)$  up to a subset of measure zero. Hence we can write for $t\in[0,T]$
\begin{align}\label{eq:BasisRepresentation}
	\vec v(t) =  \sum_{k=-\infty}^\infty v_k(t) \vec \varphi_k, 
\end{align}
for some coefficient functions $v_k:[0,T]\to \R$.

\subsection{The variational formulation}
To derive a suitable variational formulation for \eqref{eqn:H2_curl-dgl_drhs} we multiply the partial differential equation with a test function. By using partial integration both in time  and space we end up with the following variational formulation:\\
Find $ \vec A\in H^{\cu;1}_{0;}(Q) $ with $\vec A(0,.)=\vec \phi$, such that
\begin{align}
	\label{vf:H1VarForm}
	-\left(\varepsilon\partial_t \vec A,\partial_t \vec v\right)_{L^2(Q)} + \left( \sigma\partial_t \vec A , \vec v\right)_{L^2(Q)}+&\left(\mu^{-1}	\cu_x  \vec A, \cu_x \vec v\right)_{L^2(Q)} \\
	\nonumber
	&= \left(\vec  j_a, \vec v\right)_{L^2(Q)} -\left(\varepsilon	\vec \psi, \vec v(0,\cdot)\right)_{L^2(\Omega)} 
\end{align}
for all $\vec v\in H^{\cu;1}_{0;.,0}(Q) $.\\
Note, that the initial condition $\partial_t \vec A(0)=\vec \psi$ is incorporated into the variational formulation in a weak sense while the other conditions $\vec A(0)=\vec \phi$ and $\gamma_t \vec A=0$ are in the ansatz spaces and therefore are satisfied in a strong sense. In \eqref{vf:H1VarForm} we see the main problem of the equation, the different signs in front of the first term and the spatial differential operator. Hence, the bilinear form \eqref{def:BilinearForm} defined by the left-hand side of the variational formulation \eqref{vf:H1VarForm} is not equivalent to any inner product. However, if $\sigma$ is large it acts as a stabilization to the equation.  We  see this phenomenon in the numerical analysis as well.

In this paper, we will take a look at the bilinear form
\begin{equation}\label{def:BilinearForm}
	a_Q(\vec A,\vec v) \coloneqq 	-(\varepsilon\partial_{t}\vec A, \partial_t \vec v)_{L^2(Q)}+(\sigma \partial_t \vec A,\vec v)_{L^2(Q)}  +(\mu^{-1}\cu_x  \vec A, \cu_x \vec v)_{L^2(Q)},
\end{equation}
for  $\vec A\in H^{\cu;1}_{0;}(Q)$ and $\vec\phi\in  H^{\cu;1}_{0;.,0}(Q)$. Additionally, we write the right-hand side of \eqref{vf:H1VarForm} as the linear form
\begin{equation}\label{def:LinearForm}
	F(\vec v) := \left(\vec  j_a, \vec v\right)_{L^2(Q)} -\left(\varepsilon	\vec \psi, \vec v(0,\cdot)\right)_{L^2(\Omega)}.
\end{equation}

\section{Existence and uniqueness}
Let us now state the main existence and uniqueness result for the variational formulation \eqref{vf:H1VarForm}.
\begin{thm}\label{thm:ExistenceAndUniqueness}
	Let the Assumption~\ref{A:Assumptions_all} hold true. Then there exists a unique solution of the variational formulation:\\
	Find $ \vec A\in H^{\cu;1}_{0;}(Q) $ with $\vec A(0,.)=\vec \phi$, such that
	\begin{align*}
		-\left(\varepsilon\partial_t  \vec A,\partial_t \vec v\right)_{L^2(Q)} + \left( \sigma\partial_t \vec A , \vec v\right)_{L^2(Q)}+\left(\mu^{-1}	\cu_x \vec A, \cu_x  \vec v\right)_{L^2(Q)}
		= \left( \vec j_a, \vec v\right)_{L^2(Q)} -\left(\varepsilon\vec \psi, \vec v(0,\cdot)\right)_{L^2(\Omega)} 
	\end{align*}
	for all $\vec v\in H^{\cu;1}_{0;.,0}(Q) $. \\
	If additionally $\vec j_a\in L^2(Q;\R^d)$ then there exist positive constants  $c_\phi$, $c_\phi^c$, $c_\psi$,  and $c_f$  such that
	\begin{align*}
		|\vec A|^2_{H^{\cu;1}(Q)} \leq c_\phi\|\vec \phi\|^2_{L^2_{\varepsilon}({\Omega})} +c_\phi^cT \|\cu_x\vec \phi\|^2_{L^2_\mu(\Omega)} + c_\psi T\| \vec \psi\|^2_{L^2_{\varepsilon}({\Omega})}  + c_f \max\{T,T^2\} \|\vec j_a\|^2_{L^2_{\varepsilon}({Q})}.
	\end{align*}
\end{thm} 
To prove this theorem we will use a Galerkin method and split the proof into three different steps. First, we prove the existence in Proposition~\ref{prop:Existence_H1_drhs}. Then, we show the uniqueness in Proposition~\ref{prop:Uniqueness_H1_drhs}. In the end, we derive the norm estimates in Proposition~\ref{prop:inequalDepRhs} and discuss the dependencies of the coefficients $c_\phi$, $c_\phi^c$, $c_\psi$,  and $c_f$.
\subsection{Existence}
We will start with proving the existence of the variational formulation \eqref{vf:H1VarForm}. For that purpose, we have to state two small results that we will be using in the existence proof in Prop.~\ref{prop:Existence_H1_drhs}.
\begin{lem}\label{lem:H2_ungl_not_var} 
	Let the Assumption~\ref{A:Assumptions_all} hold true and $\vec u\in  H^2(0,T;L^2(\Omega;\R^d))\cap L^2(0,T;H_0(\cu;\Omega))$, $d=2,3$, with	$\vec u|_{t=0} =  \vec \phi$ and $\partial_t \vec  u|_{t=0} =\vec \psi$ be the solution of
	 \begin{align}\label{ieqn:sec_order}
			\left(\varepsilon\partial_{tt}\vec u(t),\partial_t \vec u(t)\right)_{L^2(\Omega)}+\left(\mu^{-1}\cu_x \vec u(t), \cu_x\partial_t \vec u(t)\right)_{L^2(\Omega)}& \leq  \left( \vec j(t), \partial_t\vec u(t)\right)_{L^2(\Omega)},
		\end{align}	 
		for each $t\in(0,T)$ and $\vec j\in L^{1}(0,T;L^2(\Omega;\R^d))$. Then 
	\begin{align*}
		z^{1/2}(t) := \left(\int\limits_{\Omega}\left(\abs{\vec u}^2+\abs{\partial_t \vec u}^2 +\abs{\cu_x \vec u}^2\right)(t,x)\,\mathrm dx\right)^{1/2} \leq c_2(T) z^{1/2}(0)+c_3(T) \|\vec j\|_{2,1,Q},
	\end{align*}
	holds true for all  $t\in [0,T]$. The constants $c_2$ and $c_3$ depend on  $T, \varepsilon_{\min},(\mu^{\max})^{-1},\varepsilon^{\max}$ and $(\mu_{\min})^{-1}$.
\end{lem}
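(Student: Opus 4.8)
The plan is to establish an energy estimate of Grönwall type for the second-order-in-time problem. The starting point is the differential inequality \eqref{ieqn:sec_order}, whose left-hand side is exactly the time derivative of an energy functional: observing that
\begin{align*}
\frac{\mathrm d}{\mathrm dt}\left[(\varepsilon\partial_t\vec u,\partial_t\vec u)_{L^2(\Omega)}+(\mu^{-1}\cu_x\vec u,\cu_x\vec u)_{L^2(\Omega)}\right] = 2(\varepsilon\partial_{tt}\vec u,\partial_t\vec u)_{L^2(\Omega)}+2(\mu^{-1}\cu_x\vec u,\cu_x\partial_t\vec u)_{L^2(\Omega)},
\end{align*}
where the coefficients $\varepsilon,\mu$ are time-independent by Assumption~\ref{A:Assumptions_all}, so that differentiation commutes with them. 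First I would therefore integrate \eqref{ieqn:sec_order} from $0$ to an arbitrary $s\in[0,T]$ (rather than to $T$), turning it into
\begin{align*}
\norm{\partial_t\vec u(s)}^2_{L^2_\epsilon(\Omega)}+\norm{\cu_x\vec u(s)}^2_{L^2_\mu(\Omega)} \le \norm{\vec\psi}^2_{L^2_\epsilon(\Omega)}+\norm{\cu_x\vec\phi}^2_{L^2_\mu(\Omega)} + 2\int_0^s(\vec j,\partial_t\vec u)_{L^2(\Omega)}\,\mathrm dt.
\end{align*}

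Next I would bound the $\norm{\vec u(s)}^2_{L^2(\Omega)}$ contribution: since $\vec u(s)=\vec\phi+\int_0^s\partial_t\vec u\,\mathrm dt$, one has $\norm{\vec u(s)}_{L^2(\Omega)}\le\norm{\vec\phi}_{L^2(\Omega)}+\int_0^s\norm{\partial_t\vec u}_{L^2(\Omega)}\,\mathrm dt$, which after squaring and using Cauchy--Schwarz in time brings in $\int_0^s\norm{\partial_t\vec u}^2$. Adding this to the energy identity and using the norm equivalences from Assumption~\ref{A:Assumptions_all} (the constants $\varepsilon_{\min},\varepsilon^{\max},\mu_{\min},\mu^{\max}$ enter precisely here) yields an inequality of the form
\begin{align*}
z(s) \le C_1\,z(0) + C_2\int_0^s z(t)\,\mathrm dt + 2\int_0^s \abs{(\vec j,\partial_t\vec u)_{L^2(\Omega)}}\,\mathrm dt,
\end{align*}
with $z$ as defined in the statement. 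The source term I would handle by $\abs{(\vec j,\partial_t\vec u)_{L^2(\Omega)}}\le\norm{\vec j(t)}_{L^2(\Omega)}\norm{\partial_t\vec u(t)}_{L^2(\Omega)}\le\norm{\vec j(t)}_{L^2(\Omega)}\,z^{1/2}(t)\le\norm{\vec j(t)}_{L^2(\Omega)}\sup_{\tau\le s}z^{1/2}(\tau)$, keeping it linear in $z^{1/2}$ so that the $L^1(0,T;L^2)$ norm of $\vec j$ appears rather than an $L^2$ norm.

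The final step is the Grönwall argument. Because the source term is linear in $z^{1/2}(s)$ while the rest is linear in $z(s)$, I would work with $\zeta(s):=\sup_{\tau\in[0,s]}z(\tau)$, obtaining $\zeta(s)\le C_1 z(0) + C_2\int_0^s\zeta(t)\,\mathrm dt + 2\norm{\vec j}_{2,1,Q}\,\zeta^{1/2}(s)$; completing the square in $\zeta^{1/2}(s)$ gives $\zeta(s)\le \widetilde C_1 z(0) + \widetilde C_2\int_0^s\zeta(t)\,\mathrm dt + \widetilde C_3\norm{\vec j}^2_{2,1,Q}$, and the integral form of Grönwall's lemma then produces $\zeta(T)\le e^{\widetilde C_2 T}\big(\widetilde C_1 z(0)+\widetilde C_3\norm{\vec j}^2_{2,1,Q}\big)$. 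Taking square roots and absorbing everything into $c_2(T)$ and $c_3(T)$ — each an explicit expression in $\varepsilon_{\min},(\mu^{\max})^{-1},\varepsilon^{\max},(\mu_{\min})^{-1}$ and $e^{\widetilde C_2 T}$ — yields the claimed bound. The main obstacle is the bookkeeping of the $\vec u$ (as opposed to $\partial_t\vec u$ and $\cu_x\vec u$) term: \eqref{ieqn:sec_order} controls only the derivative energy, so one must recover control of $\norm{\vec u(s)}^2_{L^2(\Omega)}$ through the fundamental theorem of calculus and feed it back into Grönwall without creating a circular estimate — keeping the source term linear in $z^{1/2}$ (hence the completion of the square) rather than prematurely applying Young's inequality is what makes this work and is why the $L^1$-in-time norm of $\vec j$ suffices.
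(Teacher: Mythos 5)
Your proposal is correct and follows essentially the same strategy as the paper's (sketched) proof: both derive the energy identity from \eqref{ieqn:sec_order}, recover $\norm{\vec u(s)}^2_{L^2(\Omega)}$ through the fundamental theorem of calculus, and arrive at the same integral inequality $z(s) \lesssim z(0) + \int_0^s \norm{\vec j}_{L^2(\Omega)} z^{1/2}\,\mathrm dt + s\int_0^s z\,\mathrm dt$, deliberately keeping the source term linear in $z^{1/2}$ so that only $\norm{\vec j}_{2,1,Q}$ enters. The single difference is the closing step: you resolve this inequality by completing the square in $\zeta^{1/2}(s)$ and invoking the integral form of Gr\"onwall's lemma, whereas the paper, following Ladyzhenskaya, bounds $\hat z(t)=\max_{0\le\xi\le t}z(\xi)$ on a short initial interval and iterates over subintervals --- both are valid and produce constants $c_2(T)$, $c_3(T)$ with the stated dependencies.
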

\begin{proof} Following the ideas of \cite[Ch.~4.2]{Ladyzhenskaya1985} we can transfer the results from the scalar to the vectorial wave equation.
	 First we integrate \eqref{ieqn:sec_order} over the interval $(0,t)$ and use the Fubini theorem and the fundamental theorem of calculus. Then we can estimate by use of Assumption~\ref{A:Assumptions_all} to get
		\begin{align*}
			\int\limits_{\Omega} \left[\varepsilon_{\min}(\partial_tu)^2(t,x)+(\mu^{\max})^{-1}(\mathrm{curl}_x u)^2(t,x)\right]\ dx& \leq  \int\limits_{\Omega} \left[\varepsilon^{\max}(\partial_tu)^2(0,x)+(\mu_{\min})^{-1}(\mathrm{curl}_x u)^2(0,x)\right]\ dx \\
			&+2\int\limits_0^t\int\limits_{\Omega} j(s,x )\partial_tu(s,x)\ dx\ ds.
		\end{align*}
		Using $(a+b)^2\leq 2a^2+2b^2$ and Cauchy-Schwarz leads for $s>0$ to 
		\begin{align*}
			\norm{u(s,.)}_{L^2(\Omega)}^2&=\norm{u(0,.)+\int\limits_0^s1\cdot \partial_t u(t,.)\ dt}_{L^2(\Omega)}^2\\
			&\leq 2 \norm{u(0,.)}_{L^2(\Omega)}^2+2s \norm{u}_{L^2((0,s)\times\Omega)}^2+\norm{\partial_t u}_{L^2((0,s)\times\Omega)}^2+ \norm{\mathrm{curl}_x u}_{L^2((0,s)\times\Omega)}^2.
		\end{align*}
		By adding both equations together we get for $
		z(t)=\norm{u(t,.)}_{L^2(\Omega)}^2 + \norm{\partial_t u(t,.)}_{L^2(\Omega)}^2+\norm{\mathrm{curl}_x u(t,.)}_{L^2(\Omega)}^2 $
		the inequality
		\begin{align*}
			\min(1,\varepsilon_{\min},(\mu^{\max})^{-1})z(t)\leq \max(2,\varepsilon^{\max},(\mu_{\min})^{-1})z(0)+2\int\limits_0^t\|j\|_{L^2(\Omega)} z^{1/2}(s)\ ds+2t\int\limits_0^tz(s)\ ds.
		\end{align*}
		for all $t\in [0,T]$. Then we write
		\begin{align*}
			a:=\max(2,\varepsilon^{\max},(\mu_{\min})^{-1}),
			b:=\min(1,\varepsilon_{\min},(\mu^{\max})^{-1})
		\end{align*}
		and define $\hat{z}(t) = \max\limits_{0\leq \xi\leq t} z(\xi)$	to get
		\begin{align}
			\label{ieqn:solv_i1}
			b\hat{z}(t)\leq az(0)+2\|j\|_{L^{2,1}(Q)} \hat{z}^{1/2}(t)+2t^2\hat{z}(t).
		\end{align}
		After solving this inequality while keeping in mind that $\hat{z}^{1/2}(t)\geq0$ we derive 
		\begin{align*}
			\hat{z}^{1/2}(t)\leq  \frac{2\|j\|_{L^{2,1}(Q)} + \sqrt{4\|j\|_{L^{2,1}(Q)}^2+4(b-2t^2) az(0)}}{2(b-2t^2)}.
		\end{align*}
		By using $\sqrt{a^2+b^2}\leq (a+b)$, for $a,b\geq 0$, and considering $t\leq \min(\sqrt{\frac{b}{4}},T)$ we get that
		\begin{align*}
			\hat{z}^{1/2}(t)&\leq \frac{2+2}{b}\|j\|_{L^{2,1}(Q)}+\sqrt{\frac{2}{b}}\sqrt{a}z^{1/2}(0)
		\end{align*}
		since $2(b-2t^2)\geq b$. If $\sqrt{\frac{b}{4}}<T$, then we are done. Otherwise we choose as initial value $t_1=\sqrt{\frac{b}{4}}$, instead of $t=0$ and repeat the computations from above to get the desired result.
\end{proof}

\begin{cor}\label{cor:tool1_sigma}
	Let the Assumption~\ref{A:Assumptions_all} hold true. Let $\vec A\in H^2(0,T;H(\cu;\Omega))$ with 	$\vec A|_{t=0} =  \vec \phi$ and $\partial_t \vec  A|_{t=0} =\vec \psi$  be the solution of
	 {\begin{align*}
			(\varepsilon \partial_{tt} \vec A(t) , \partial_t \vec A(t))_{L^2(\Omega)}+(\sigma \partial_{t} \vec A(t), \partial_t \vec A(t))_{L^2(\Omega)}+(\mu^{-1} \cu_x  \vec A(t) , \cu_x \partial_t \vec A(t))_{L^2(\Omega)} = (\vec j_a(t),\partial_t \vec A(t))_{L^2(\Omega)}
		\end{align*}
		for each $t\in(0,T)$ and } $\vec j_a\in L^{1}(0,T;L^2(\Omega;\R^d))$. Then we derive that
	\begin{align*}
		z^{1/2}(t) := \left(\int\limits_{\Omega}\left(\abs{\vec A}^2+\abs{\partial_t \vec A}^2 +\abs{\cu_x \vec A}^2\right)(t,x)\, \mathrm dx\right)^{1/2} \leq c_2(t) z^{1/2}(0)+c_3(t) \|\vec j_a\|_{2,1,Q},
	\end{align*}
	for all  $t\in [0,T]$. The constants $c_2$ and $c_3$  depend on  $ \varepsilon_{\min},(\mu^{\max})^{-1},\varepsilon^{\max}$, $(\mu_{\min})^{-1}$ as well as $T$.
\end{cor}
\begin{proof}
	Using the positive semi-definiteness of $\sigma$ we compute
	 \begin{align*}
			(\varepsilon \partial_{tt} \vec A(t) , \partial_t \vec A(t))_{L^2(\Omega)}+(\mu^{-1} \cu_x  \vec A(t) , \cu_x \partial_t \vec A(t))_{L^2(\Omega)} \leq (\vec j_a(t),\partial_t \vec A(t))_{L^2(\Omega)}.
		\end{align*}
		for each $t\in(0,T)$. Therefore we can use Lem.~\ref{lem:H2_ungl_not_var} to prove the desired estimate.
\end{proof}

Now, we are ready to prove the existence of the solution of the variational formulation \eqref{vf:H1VarForm}.

\begin{prop}[Existence]\label{prop:Existence_H1_drhs}
	Let Assumption~\ref{A:Assumptions_all} hold true. Then there exists a solution of the variational formulation \eqref{vf:H1VarForm}.
\end{prop}
\begin{proof}
	Let us consider $\omsig := \Omega\cap \mathrm{supp}(\sigma)$ which is a Lipschitz domain by Assumption~\ref{A:Assumptions_all}. The main idea of this proof is to split the differential equation into two equations over different domains, namely one domain is the support $\omsig$ of the conductivity $\sigma$ and the other $\Omega\backslash\overline{\omsig}$. Then we add the solutions together to show existence. The produced solution is the solution to an interface problem with zero tangential traces on the interface $\partial \omsig \backslash \partial \Omega$.  Hence, let us take a look at the resulting equations on the domain $\omsig$.\\
	1. On the domain $\omsig$ we now use a Galerkin method. 	Let us define the bilinear  {form}
	\begin{align}
		\label{def:BilinearForm2}	
		a_{\omsig}(\vec A(t) ,\vec \phi)\coloneqq (\varepsilon\partial_{tt}\vec A(t) ,\vec\phi)_{L^2(\omsig)} +(\sigma\partial_t \vec A(t) ,\vec \phi)_{L^2(\omsig)} +(\mu^{-1}\cu_x  \vec A(t) ,\cu_x \vec\phi)_{L^2(\omsig)},
	\end{align}
	for $\vec \phi\in H(\cu;\omsig)$, $t\in(0,T)$.  We consider $\{\vec\varphi_k\}_{ {k \in \Z}}$, the fundamental system of $H_0(\cu;\omsig)$ of Lem.~\ref{lem:FS-H_0curl} for $\Omega = \Omega_\sigma$. Let $N\in\N$. Using the basis representation \eqref{eq:BasisRepresentation}  we then search for a
	\begin{equation}
		\label{def:AN}
		\vec A^N(t)=\sum\limits_{k=-N}^N  { {a_k^N}}(t)\vec \varphi_k(x)
	\end{equation}
	which  solves
	\begin{align}
		\nonumber
		a_{\omsig}(\vec A^N(t) ,\vec \varphi_l)	&=(\vec j_a {(t)},\vec \varphi_l)_{L^2(\omsig)},\\
		\label{eqn:ckN_drhs}
		\frac{d}{dt}  {a_k^N}(t)&=(\vec \psi,\vec \varphi_k)_{L^2(\omsig)},\\
		\nonumber
		 {a_k^N}(0) &=\alpha_k^N,
	\end{align}
	for all $l,k \in \Z_N \coloneqq \{ z\in \Z: |z| \leq N\}$,	where $\alpha_k^N$ are the coefficients of
	\begin{align*}
		\vec \phi^N(x) = \sum\limits_{k=-N}^N \alpha_k^N\vec \varphi_k(x),
	\end{align*}
	and $\vec \phi^N\to \vec  \phi$ in $H_0(\cu;\omsig)$ for $N\to \infty$. We have $\vec A^N(0,x) = \vec \phi^N(x)$ and define $f_k:= (\vec j_a,\vec \varphi_k)_{L^2(\omsig)} $. 
	
	Since $\sigma$ is uniformly positive definite and bounded over $\omsig$, the induced weighted scalar product $(\sigma . ,.)_{L^2(\omsig)}$ is equivalent to $(\varepsilon . ,.)_{L^2(\omsig)}$. Hence, there exists a $\beta_k\in\R_+$ such that
	\begin{align*}
		(\sigma \vec \varphi_k,\vec \varphi_l)_{L^2(\omsig)} = \beta_k \delta_{kl}
	\end{align*}
	for $k,l\in \Z_N$.	These $\beta_k$ are bounded from below by $\sigma_{\min}$.
	
	Next, we combine everything to arrive 	for $t\in (0,T)$ and $k = 0,\dots,N$ at 
	\begin{align}
		\nonumber
		 {a_k''^N}(t)+ \beta_k  {a_k'^N}(t)+ \lambda_k  {a_k^N}(t) &= f_k(t),\\
		\label{eqn:ck1}
		 {a_k'^N}(0)  &= (\vec \psi,\vec \varphi_k)_{L^2(\omsig)},\\
		\nonumber
		 {a_k^N} (0) &=\alpha_k^N
	\end{align}
	where $\vec \varphi_k$  is an eigenfunction of $\cu_x\mu^{-1}\cu_x$ in $\Omega_\sigma$, and for $k=-N,\dots,-1$
	\begin{align}
		\nonumber
		 {a_k''^N}(t)+ \beta_k  {a_k'^N}(t) &= f_k(t),\\
		\label{eqn:ck2}
		 {a_k'^N}(0) &= (\vec \psi,\vec \varphi_k)_{L^2(\omsig)},\\
		\nonumber
		 {a_k^N}(0) &= {\alpha_k^N}
	\end{align}
	where $\vec \varphi_k$  is part of the kernel of $\cu_x\mu^{-1}\cu_x$. The solutions can be computed using standard techniques for ordinary differential equations such as \cite[L.~20, L.~21]{tenenbaum1985ordinary}.  The solutions are well defined for $f_k\in L^1(0,T)$. Therefore, there exists for $\vec j_a\in L^1(0,T;L^2(\omsig))$ a solution $\vec A^N\in C^2(0,T;H(\cu;\omsig))$. If we multiply  (\ref{eqn:ckN_drhs}) with $c_l'^N(t)$ and sum up over $l=-N,\dots,N$, we compute using \eqref{def:BilinearForm2}
	\begin{align*}
		a_{\omsig}(\vec A(t) ,\partial_t \vec A(t)) =(\vec j_a(t),\partial_t\vec  A^N(t))_{L^2(\omsig)},
	\end{align*}
	for $t\in[0,T]$. This satisfies the conditions to apply Cor.~\ref{cor:tool1_sigma} with which we arrive at
	\begin{align*}
		\left(\int\limits_{\omsig}\abs{\vec A^N(t)}^2+\abs{\partial_t \vec A^N(t)}^2 +\abs{\cu_x \vec A^N(t)}^2\,\mathrm dx\right)^{1/2} \leq c_2(t) (z^{N})^{ 1/2}(0)+c_3(t) \|\vec j_a\|_{L^1(0,T;L^2(\omsig))}
	\end{align*}
	for every $t\in(0,T)$.
	By construction  $c_2$ and $c_3$ are monotonically increasing and therefore bounded by $c_2(T)$ respectively $c_3(T)$. Additionally we get with Bessel's inequality , \cite[Thm. 1.7.1]{aubin}, in $H(\cu;\omsig)$ and $L^2(\Omega_\sigma)$
	\begin{align*}
		z^N(0) &= \int\limits_{\omsig}\abs{\vec \phi^{N}}^2+\abs{\vec \psi^N}^2 +\abs{\cu_x \vec \phi^N}^2\,\mathrm dx\leq c\|\vec \phi\|^2_{H(\cu;\omsig)} + c \|\vec \psi\|^2_{L^2(\omsig)}
	\end{align*}
	We define $Q_\sigma := [0,T]\times \omsig$. Therefore
	\begin{align*}
		\|\vec A^N\|_{H^{\cu;1}(Q_\sigma)} \leq c\|\vec \phi\|^2_{H(\cu;\omsig)}+ \tilde{c}\|\vec \psi\|^2_{L^2(\omsig)} +\hat{c} \|j_a\|_{L^1(0,T;L^2(\omsig))} < C,
	\end{align*}
	where $C$ is independent of $N$.  
	
	Now, we want to transfer the existence to the space-time variational formulation \eqref{vf:H1VarForm}.  We split $\vec A^N(t,x)=\vec A_0^N(t,x)+\vec \phi^N(x)$. Since $\|\vec \phi^N\|_{H(\cu;\omsig)}$ is bounded by $\|\phi\|_{H(\cu;\omsig)}$ because of Bessel's inequality, the sequence $(\vec A_0^N)_{N\in\N}$ is bounded as well. The space \[H^{\cu;1}_{0;}(Q_\sigma) = H^1(0,T;L^2(\omsig))\bigcap L^2(0,T;H_0(\cu;\omsig))\]
	is a Hilbert space, see Lem.~\ref{lem:space_time_HR}. Hence there exists a weakly convergent subsequence of $(\vec A_0^N)_{N\in\N}$. We write this subsequence as  $\{\vec A_0^N \}_N$ for convenience. Then there exists a $\vec A_0\in H^{\cu;1}_{0;0,}(Q_\sigma)$ with
	\begin{align*}
		\vec A_0^N&\rightharpoonup \vec A_0 &&\textmd{ in }L^2(Q_\sigma),\\
		\partial_t \vec  A_0^N &\rightharpoonup \partial_t \vec A_0 &&\textmd{ in }L^2(Q_\sigma),\\
		\cu_x  \vec  A_0^N &\rightharpoonup \cu_x  \vec A_0 &&\textmd{ in }L^2(Q_\sigma).
	\end{align*}
	
	We constructed $\vec A_0^N$ such that
	\begin{align*}
		a_{\omsig}(\vec A_0^N(t) , \vec \varphi_l) =(\vec j_a(t) ,\vec \varphi_l)_{L^2(\Omega_\sigma )}-(\mu^{-1}\cu_x \vec \phi^N,\cu_x \vec \varphi_l)_{L^2(\Omega_\sigma)}
	\end{align*}
	for all $l=-N,..,N$, $t\in(0,T)$. Let $M\in \N$. Choose $N>M$ and $d_l\in H^1(0,T)$ where $d_l(T)= 0$, $l=-M,\dots,M$. Multiply the equation with $d_l$ and sum up over $l=-M,\dots,M$.
	Moreover, we get for $\vec \eta(t,x):= \sum\limits_{l=-M}^M d_l(t )\vec \varphi_l(x)$ and  $t\in(0,T)$ the equation
	\begin{align*}
		a_{\omsig}( \vec A_0^N(t) ,\vec \eta) =(\vec j_a(t) ,\vec \eta)_{L^2(\omsig)}-(\mu^{-1}\cu_x \vec \phi^N,\cu_x\vec  \eta)_{L^2(\omsig)}.
	\end{align*}
	By integration over $(0,T)$ and using integration by parts for the first term we get to the bilinear form \eqref{def:BilinearForm} and
	\begin{align*}
		a_{Q_\sigma}(\vec A_0^N,\vec \eta)=(\vec j_a,\vec \eta)_{L^2(Q_\sigma )} - (\varepsilon\partial_t \vec A^N|_{t=0},\vec \eta|_{t=0})_{L^2(\omsig)}-( \mu^{-1}\cu_x \vec \phi^N, \cu_x\vec \eta)_{L^2(Q_\sigma)} .
	\end{align*}
	Next, we take the limit $N\to\infty$. Since $\vec\eta,\partial_t\vec  \eta$ and $\cu_x \vec \eta$ are in $L^2(Q_\sigma;\R^d)$ we get with the weak convergence that
	\begin{align*}
		a_{Q_\sigma}(\vec A_0,\vec \eta) =(\vec j_a,\vec \eta)_{L^2(Q_\sigma)}-(\varepsilon\vec \psi,\vec \eta(0,.))_{L^2(\omsig)} -( \mu^{-1}\cu_x \vec \phi, \cu_x\vec \eta)_{L^2(Q_\sigma)}.
	\end{align*}
	This equation holds for all $\eta$ with the representation $\sum\limits_{l=-M}^M d_l(t )\vec \varphi_l(x)$. Let $\mathcal{M}_M$ be the space of such functions. For every $M\in \N$ we can repeat this argumentation. The space $\bigcup\limits_{M=1}^\infty\mathcal{M}_M$ is dense in $L^2(0,T;H_0(\cu;\Omega_\sigma))$ because we can approximate every element with such a sum, see  \cite[Prop.~23.2d]{zeidler}. Hence the equation above holds true for every $\eta\in  H^{\cu;1}_{0;,0}(Q_\sigma)$ and therefore $\vec A$  is the weak solution of our differential equation in $H^{\cu;1}_{0;0,}(Q_\sigma)$. 
	\vspace*{.2cm}\\
	2. Let us now consider the spatial domain  $\Omega_0:=\Omega\backslash\overline{\omsig}$  and space-time domain $Q_0:=(0,T)\times \Omega_0 $ where $\sigma$ is zero.
	 Again we consider the ansatz \eqref{def:AN} and the equation
		\begin{align}
			\nonumber
			(\varepsilon\partial_{tt}\vec A^N(t) ,\vec\varphi_l)_{L^2(\Omega_0)}  +(\mu^{-1}\cu_x  \vec A^N(t) ,\cu_x \vec\varphi_l)_{L^2(\Omega_0)}	&=(\vec j_a(t),\vec \varphi_l)_{L^2(\Omega_0)},\\
			\label{eqn:ckN_drhs_sigma0}
			\frac{d}{dt} a_k^N(t)&=(\vec \psi,\vec \varphi_k)_{L^2(\omsig)},\\
			\nonumber
			a_k^N(0) &=\alpha_k^N
		\end{align}
		for $l,k \in \Z_N$.
		Then we follow the same steps as before and we consider instead of the equations \eqref{eqn:ck1} and \eqref{eqn:ck2} the equations
		\begin{align}
			\nonumber
			a_k''^N(t)+ \lambda_k a_k^N(t) &= f_k(t),\\	
			\label{eqn:ode2_sigma_equal_0}
			a_k'^N(0) &= (\vec \psi,\vec \varphi_k)_{L^2(\Omega_0)},\\
			\nonumber
			a_k^N (0) &=\alpha_k^N
		\end{align}
		for $t\in (0,T)$ and $\lambda_k>0$ for  $k=0,\dots,N$, i.e. $\vec \varphi_k$  is an eigenfunction of $\cu_x\mu^{-1}\cu_x$ in $\Omega_0$, and $\lambda_k=0$ for $k=-N,\dots,-1$ 
		where $\vec \varphi_k$  is part of the kernel of $\cu_x\mu^{-1}\cu_x$ in $\Omega_0$. The ordinary equation \eqref{eqn:ode2_sigma_equal_0} is uniquely solvable for $f_k\in L^1(0,T)$ and can be solved using standard techniques such as \cite[L.~20, L.~21]{tenenbaum1985ordinary}. By multiplying \eqref{eqn:ckN_drhs_sigma0} with  $c_l'^N(t)$ and summing up over $l=-N,\dots,N$, we compute using \eqref{def:BilinearForm2}
		\begin{align*}
			(\varepsilon\partial_{tt}\vec A^N(t) ,\vec A^N(t))_{L^2(\Omega_0)}  +(\mu^{-1}\cu_x  \vec A^N(t) ,\cu_x \vec A^N(t))_{L^2(\Omega_0)}	&=(\vec j_a(t),\vec A^N(t))_{L^2(\Omega_0)}.
		\end{align*}
		This satisfies the conditions to apply Lem.~\ref{lem:H2_ungl_not_var} with which we arrive at
		\begin{align*}
			\left(\int\limits_{\Omega_0}\abs{\vec A^N(t)}^2+\abs{\partial_t \vec A^N(t)}^2 +\abs{\cu_x \vec A^N(t)}^2\,\mathrm dx\right)^{1/2} \leq c_2(t) (z^{N})^{ 1/2}(0)+c_3(t) \|\vec j_a\|_{L^1(0,T;L^2(\Omega_0))}
		\end{align*}
		for every $t\in(0,T)$ and 	
		$z^N(0) = \|\vec \phi^{N}\|_{L^2(\Omega_0)}^2+\norm{\vec \psi^N}_{L^2(\Omega_0)}^2 +\norm{\cu_x \vec \phi^N}_{L^2(\Omega_0)}^2 \leq c\|\vec \phi\|^2_{H(\cu;\Omega_0)} + c \|\vec \psi\|^2_{L^2(\Omega_0)}$.
		Therefore we have weak convergence of $(\vec A^N)_N$, $(\cu_x \vec A^N)_N$ and $(\partial_t \vec A^N)_N$ in $L^2(Q)$. By going through the same argumentation as above, we see that the limit $\vec A\in H^{\cu;1}_{0;0,}(Q_0)$ indeed is the weak solution of
		\begin{align*}
			-(\varepsilon\partial_{t}\vec A ,\partial_{t}\vec  v)_{L^2(Q_0)}  +(\mu^{-1}\cu_x  \vec A^N ,\cu_x \vec v)_{L^2(Q_0)}	&=(\vec j_a(t),\vec v)_{L^2(Q_0)}- \left(\varepsilon\vec \psi, \vec v(0,\cdot)\right)_{L^2(\Omega_0)},
		\end{align*}
		with  $\vec A(0,.)=\vec \phi$. Adding both solutions on the domains $Q_0$ and $Q_\sigma$ yields existence. 
	
\end{proof}

\subsection{Uniqueness}

Next, we take a look at the uniqueness of  the solution to the variational formulation \eqref{vf:H1VarForm} with the bilinear form \eqref{def:BilinearForm} and right-hand side \eqref{def:LinearForm}.
\begin{prop}[Uniqueness]\label{prop:Uniqueness_H1_drhs}
	Let Assumption~\ref{A:Assumptions_all} hold true. Then there exists a unique solution for the variational formulation \eqref{vf:H1VarForm}.
\end{prop}
\begin{proof}
	In Proposition~\ref{prop:Existence_H1_drhs} we have already shown existence. What is left to be proven is the uniqueness.
	Assume that there are two solutions $\vec A'$ and $\vec A''$, then $\vec w:= \vec A'-\vec A''$ satisfies the variational formulation
	\begin{align*}
		- (\varepsilon\partial_t \vec w ,\partial_t \vec v)_{L^2(Q)}+( \sigma\partial_t \vec w , \vec v)_{L^2(Q)}+ ( \mu^{-1}	\cu_x \vec w, \cu_x \vec v)_{L^2(Q)} = 0 
	\end{align*}
	for all $\vec v\in H^{\cu;1}_{0;,0}(Q) $. Additionally $\vec w(0,x) = 0$ holds true for $x\in \Omega$. Choose  $b\in [0,T]$ arbitrary and consider
	\begin{align*}
		\vec \eta(t,x) := \begin{cases}	
			\int\limits_b^t\vec w(\tau,x)\,\mathrm d\tau, \hspace*{.3cm}&\textmd{ for } 0\leq t\leq b,\\
			0, \hspace*{2cm}&\textmd{ for } b\leq t\leq T.
		\end{cases}
	\end{align*}
	Then $\vec \eta\in H^{\cu;1}_{0;,0}(Q)$ with  $\vec \eta(t,x) = 0$ for $t\geq b$ and we get for $\vec v=\vec \eta$:
	\begin{align*}
		- (\varepsilon\partial_t \vec w,\partial_t \vec \eta)_{L^2(Q_{(0,b)})}+(\sigma\partial_t \vec w, \vec \eta)_{L^2(Q_{(0,b)})}+  (\mu^{-1}	\cu_x \vec  w, \cu_x\vec  \eta)_{L^2(Q_{(0,b)})} = 0, 
	\end{align*}
	where $Q_{(0,b)}$ is the intersection of $Q$ with the half space $t< b$. Since $\partial_t \vec \eta (t,x)= w(t,x) $	for $(t,x)\in {Q_{(0,b)}}$, we compute
	\begin{align*}
		(\varepsilon\partial_{tt}\vec \eta, \partial_t\vec \eta)_{L^2(Q_{(0,b)})} - (\sigma\partial_{tt}\vec \eta,\vec \eta)_{L^2(Q_{(0,b)})} - (\mu^{-1}	\cu_x \partial_t \vec \eta,\cu_x \vec \eta)_{L^2(Q_{(0,b)})} = 0.
	\end{align*}
	Through integration by parts we get
	\begin{align*}
		- (\sigma\partial_{tt}\vec \eta, \vec \eta)_{L^2(Q_{(0,b)})}&= (\sigma\partial_{t}\vec \eta,\partial_{t} \vec \eta)_{L^2(Q_{(0,b)})}\\
		&\geq  0,
	\end{align*}
	because $\partial_t\vec \eta(0,x) =\vec w(0,x) = 0$ for $x\in\Omega$, $\vec \eta(b,x) = 0$ by definition and $\sigma$ is positive semi-definite. Therefore 
	\begin{align*}
		(\varepsilon\partial_{tt}\vec \eta,\partial_t\vec  \eta)_{L^2(Q_{(0,b)})} - (\mu^{-1}	\cu_x \partial_t\vec  \eta, \cu_x \vec \eta)_{L^2(Q_{(0,b)})} \leq 0
	\end{align*}
	holds true. Hence
	\begin{align*}
		\frac{1}{2}\int\limits_{Q_{(0,b)}} \partial_t(\varepsilon\partial_{t}\vec \eta\cdot\partial_{t}\vec \eta ) \,\mathrm dx\,\mathrm dt- \frac{1}{2} \int\limits_{Q_{(0,b)}} \partial_t(\mu^{-1}\cu_x  \vec \eta\cdot\cu_x  \vec \eta )\,\mathrm dx\,\mathrm dt \leq 0.
	\end{align*}
	holds true  because $\varepsilon$ and $\mu^{-1}$ are symmetric. From the definition of $\vec \eta$ we compute $\vec \eta(b,x)= 0$ and therefore $(\cu_x\vec \eta)(b,x) = 0$  for $x\in \Omega$. Additionally, $\partial_t\vec \eta(0,x) =\vec w(0,x) = 0$ for $x\in\Omega$  and therefore we arrive at
	\begin{align*}
		\int\limits_{\Omega} (\varepsilon\partial_{t}\vec \eta\cdot\partial_{t}\vec \eta) (b,x)\,\mathrm dx 
		+  \int\limits_{\Omega} (\mu^{-1}\cu_x \vec \eta\cdot\cu_x \vec \eta )(0,x)\,\mathrm dx\leq 0.
	\end{align*}
	%	Therefore we derive
	%	\begin{align*}
		%		\varepsilon_{\min}\int\limits_{\Omega} (\partial_{t}\eta\cdot\partial_{t}\eta) (b,x)\,\mathrm dx 
		%		+ \mu_{\min}\int\limits_{\Omega} (\cu_x  \eta\cdot\cu_x  \eta )(0,x)\,\mathrm dx\leq 0 .
		%	\end{align*}
	From this we derive by  $\partial_t\vec \eta(b,x) =\vec w(b,x)$ that
	\begin{align*}
		\int\limits_{\Omega} \vec w^2 (b,x)\,\mathrm dx = 0\qquad \text{and}\qquad	\int\limits_{\Omega}\left( \int_0^b \cu_x\vec w \right)^2 (x)\,\mathrm dx = 0
	\end{align*}
	holds true for any $b\in(0,T)$, since $\partial_t \vec\eta (t,x)= \vec w(t,x)$. With this, we can deduce that $\vec w(t,x)$ vanishes almost everywhere.	
\end{proof}

\subsection{Norm estimate}
Now that we know that the variational formulation \eqref{vf:H1VarForm} is uniquely solvable, we take a look at the norm estimate. In this part of the section, we consider $\vec j_a\in L^2(Q;\R^d)$. We will derive the norm estimate of Theorem \ref{thm:ExistenceAndUniqueness} and take a closer look at the dependencies of the coefficients  $c_\phi$, $c_\phi^c$, $c_\psi$,  and $c_f$.

\begin{lem}\label{lem:ODE2}
	Let Assumption~\ref{A:Assumptions_all} hold, $\vec j_a\in L^2((0,T) \times \omsig;\R^d)$,  $\{\lambda_k,\varphi_k\} _{k\in \N_0}$ the pair of non-zero eigenvalues and eigenfunctions of the $\cu_x \mu^{-1} \cu_x$-operator in $\omsig$ from Lemma~\ref{lem:FS-H_0curl}, $\alpha_k\in\R$ and 	$\beta_k:= (\sigma \vec \varphi_k,\vec \varphi_k)_{L^2(\omsig)} >0$ for $k \in \N_0$. Additionally, let $ {a_k}$ be the solution of the ordinary differential equation 
	\begin{align}
		\nonumber
		 {a_k}''(t)+ \beta_k  {a_k}'(t)+ \lambda_k  {a_k} (t) &= f_k(t),\\
		\label{eqn:ODE2}
		 {a_k} (0) &=\alpha_k,\\
		\nonumber
		 {a_k}'(0) &= (\vec \psi,\vec \varphi_k)_{L^2(\Omega_\sigma)}	
	\end{align}
	for $t\in (0,T)$, and $f_k(t) := (\vec j_a(t), \vec \varphi_k)_{L^2(\omsig)}$. Then there exist positive constants $c_\alpha^\lambda$, $c_\alpha$, $c_\psi$ and $c_f$ such that
	\[
	\sum_{k\in \N_0}\intT \lambda_k( {a_k})^2(t)\, \mathrm dt +\intT (  {a_k}')^2(t)\,\mathrm dt \leq \sum_{k\in \N_0} c_\psi (\vec \psi,\vec \varphi_k)^2_{L^2(\omsig)}+(c_\alpha^\lambda \lambda_k +c_\alpha ) (\alpha_k)^2 + c_f T\|f_k\|^2_{L^2(0,T)}.
	\]
\end{lem}

\begin{proof}
	For these estimates we need to consider three cases, namely $\beta_k^2-4\lambda_k>0$, $\beta_k^2-4\lambda_k<0$ and $\beta_k^2-4\lambda_k=0$.\\
	1. Let $\beta_k^2-4\lambda_k>0$. We define $\kappa_1 :=\frac{-\beta_k + \sqrt{\beta_k^2-4\lambda_k}}{2}$,
	$ \kappa_2 :=\frac{-\beta_k - \sqrt{\beta_k^2-4\lambda_k}}{2}$, $\gamma_k = \sqrt{\beta_k^2-4\lambda_k}$.  Then the solution of \eqref{eqn:ODE2} is given by
	\begin{align*}
		 {a_k} (t) = &(\vec \psi,\vec \varphi_k)_{L^2(\omsig)}\frac{ e^{\kappa_1t}- e^{\kappa_2t}}{\gamma_k}+\alpha_k\frac{ \kappa_1e^{\kappa_2t}- \kappa_2e^{\kappa_1t}}{\gamma_k}\\
		&+\frac{1}{\gamma_k}\int\limits_0^t	( e^{\kappa_1(t-s)}-e^{\kappa_2(t-s)})f_k(s)\,\mathrm ds.
	\end{align*}
	We know that $\kappa_1,\kappa_2<0$, because $\beta_k^2-4\lambda_k>0$ and so $(-\beta_k-\sqrt{\beta_k^2-4\lambda_k})<0$. Additionally we get $-(\kappa_1+\kappa_2) = \beta_k$ as well as $\kappa_1\kappa_2 = \lambda_k$. With these facts and using $(a+b)^2\leq 2 a^2+2b^2$ we compute  
	\[
	\intT  \left(e^{\kappa_1t}- e^{\kappa_2t}\right)^2\,\mathrm dt\leq \frac{1}{\kappa_1}(e^{2\kappa_1 T}-1)+ \frac{1}{\kappa_2}(e^{2\kappa_2 T}-1)\leq \frac{1}{-\kappa_1}+\frac{1}{-\kappa_2} = \frac{\beta_k}{\lambda_k}.
	\]
	In the same way, we estimate
	\[
	\intT  \left(\kappa_1 e^{\kappa_1t}- \kappa_2e^{\kappa_2t}\right)^2\,\mathrm dt\leq \kappa_1(e^{2\kappa_1 T}-1)+\kappa_2(e^{2\kappa_2 T}-1)\leq \beta_k.
	\]
	With this, we can compute an estimate for the desired norms  using again $(a+b)^2\leq 2 a^2+2b^2$
	\begin{align*}
		\intT \lambda_k( {a_k})^2(t)\,\mathrm dt+\intT (  {a_k}')^2(t)\,\mathrm dt\leq&\, 2(\vec \psi,\vec \varphi_k)^2_{L^2(\omsig)}\frac{4 \beta_k}{\beta_k^2-4\lambda_k}+2(\alpha_k)^2\lambda_k\frac{4\beta_k}{\beta_k^2-4\lambda_k}\\
		&+2\frac{2\beta_k}{\beta_k^2-4\lambda_k}T\intT f_k^2(s)\,\mathrm ds.
	\end{align*}
	From the estimate $\sqrt{2ab}\leq (a-b)$ for $a,b\leq 0$ and $a>b$ we derive $ \frac{1}{(\beta_k^2-4\lambda_k)}\leq \frac{1}{\sqrt{2}b_k2\sqrt{\lambda_k}}\leq\frac{1}{2b_k\sqrt{\lambda_k}}$. Using this estimate we arrive at 
	\begin{align*}
		\intT \lambda_k( {a_k})^2(t)\,\mathrm dt+\intT (  {a_k}')^2(t)\,\mathrm dt\leq (\vec \psi,\vec \varphi_k)^2_{L^2(\omsig)}\frac{4 }{\sqrt{\lambda_0}}+(\alpha_k)^2\lambda_k\frac{4 }{\sqrt{\lambda_0}}+\frac{2 }{\sqrt{\lambda_0}}T\| f_k\|^2_{L^2(0,T)}
	\end{align*}
	where $\lambda_0$ is the smallest eigenvalue of $\{\lambda_k\}_{k\in \N_0}$ from Lemma \ref{lem:FS-H_0curl} for $\Omega=\omsig$. \\
	2. Let $\beta_k^2-4\lambda_k<0$ and define $\gamma_k = \sqrt{4\lambda_k-\beta_k^2}$. Then we write the solution of \eqref{eqn:ODE2} as 
	\begin{align*}
		 {a_k} (t) =&\ \alpha_k e^{-\frac{\beta_k}{2}t}\left( \cos(\gamma_kt)+\frac{\beta_k}{2\gamma_k} \sin(\gamma_kt)\right) +\frac{1}{\gamma_k}(\vec \psi,\vec \varphi_k)_{L^2(\omsig)}  e^{-\frac{\beta_k}{2}t}\sin(\gamma_kt)\\
		&+\frac{1}{\gamma_k}\int\limits_0^te^{\frac{\beta_k}{2}(s-t)}\sin(\gamma_k(t-s))f(s)\,\mathrm ds
	\end{align*}
	Note that the parameter $\beta_k = (\sigma \vec \varphi_k,\vec \varphi_k)_{L^2(\omsig)}$ is positive. Hence we can estimate 
	\[\intT e^{-\beta_k t}\,\mathrm dt\leq \frac{1}{\beta_k}. \]
	Next, we use the fact that $\sin^2(\gamma_kt)\leq 1$ and $\cos^2(\gamma_kt)\leq 1$. Then we derive by using $(a+b)^2\leq 2 a^2+2b^2$ and the Cauchy-Schwarz inequality yet again the following estimate
	\begin{align*}
		\intT \lambda_k( {a_k})^2(t)\,\mathrm dt\leq \lambda_k(\vec \psi,\vec \varphi_k)^2_{L^2(\omsig)}\frac{4 }{\beta_k\gamma_k^2}+4(\alpha_k)^2\lambda_k\left(\frac{1}{\beta_k}+\frac{\beta_k}{4\gamma_k^2}\right)+\frac{2\lambda_k}{\beta_k\gamma_k^2}T\intT f_k^2(s)\,\mathrm ds.
	\end{align*}
	In the same way, we estimate
	\begin{align*}
		\intT ( {a_k}')^2(t)\,\mathrm dt\leq\ & (\vec \psi,\vec \varphi_k)^2_{L^2(\omsig)}\frac{4 }{\beta_k\gamma_k^2} \left(\frac{\beta_k^2}{2}+2\gamma_k^2\right) + 4(\alpha_k)^2\frac{1}{\beta_k}\left(\gamma_k^2 + \frac{\beta_k^4}{16\gamma_k^2}\right)\\
		&+\frac{2}{\beta_k\gamma_k^2}\left(2\frac{\beta_k^2}{4}+ 2 \gamma_k^2\right)T\intT f_k^2(s)\,\mathrm ds.
	\end{align*}
	By inserting $\gamma_k^2 = {4\lambda_k-\beta_k^2}$ we get
	\begin{align*}
		\intT \lambda_k( {a_k})^2(t)\,\mathrm dt+\intT ( {a_k}')^2(t)\,\mathrm dt\leq\ & (\vec \psi,\vec \varphi_k)^2_{L^2(\omsig)}\left(\frac{36\lambda_k }{\beta_k(4\lambda_k-\beta_k^2)}+\frac{6\beta_k }{4\lambda_k-\beta_k^2} \right)\\
		& + 4(\alpha_k)^2\left(\frac{5\lambda_k}{\beta_k}+\frac{\lambda_k\beta_k}{4(4\lambda_k-\beta_k^2)}+ \frac{\beta_k^3}{16(4\lambda_k-\beta_k^2)}\right)\\
		&+\left(\frac{2\lambda_k}{\beta_k(4\lambda_k-\beta_k^2)}+\frac{\beta_k}{4\lambda_k-\beta_k^2}+\frac{4}{\beta_k}\right)T\intT f_k^2(s)\,\mathrm ds.
	\end{align*}
	Note that 	the term $\frac{\lambda_k}{4\lambda_k-\beta_k^2}$ only shows up when $\beta_k^2<4\lambda_k$. It is bounded since $\beta_k$ is bounded by $\beta_{\max} := \max_k \beta_k$, which is bounded by $\sup_{x\in\omsig} \sigma(x)$, but $\lambda_k$ is increasing monotonically. Therefore the maximum will be reached for smaller $k$.\\
	Let us define $\beta_{\min} := \min_k \beta_k>0$. Then we use the estimate $ \frac{1}{(\beta_k^2-4\lambda_k)}\leq \frac{1}{2b_k\sqrt{\lambda_k}} $ to derive
	\begin{align*}
		\intT \lambda_k( {a_k})^2(t)\,\mathrm dt+\intT ( {a_k}')^2(t)\,\mathrm dt\leq\ & (\vec \psi,\vec \varphi_k)^2_{L^2(\omsig)}\left(\frac{36\lambda_k }{\beta_{\min}(4\lambda_k-\beta_k^2)}+\frac{3 }{\sqrt{\lambda_0}} \right)\\
		& + 4(\alpha_k)^2\left(\frac{5\lambda_k}{\beta_{\min}}+\frac{\lambda_k\beta_{\max}}{4(4\lambda_k-\beta_k^2)}+ \frac{\beta_{\max}^2}{32\sqrt{\lambda_0}}\right)\\
		&+\left(\frac{2\lambda_k}{\beta_k(4\lambda_k-\beta_k^2)}+\frac{1}{8\sqrt{\lambda_0}}+\frac{4}{\beta_{\min}}\right)T\| f_k\|^2_{L^2(0,T)}.
	\end{align*}
	3. Let us consider the last case $\beta_k^2-4\lambda_k=0$. Then the solution to \eqref{eqn:ODE2} is given by
	\begin{align*}
		 {a_k} (t) &=\alpha_k(1+\frac{\beta_k}{2}t) e^{-\frac{\beta_k}{2}t}+(\vec \psi,\vec \varphi_k)_{L^2(\omsig)} te^{-\frac{\beta_k}{2}t} +\int\limits_0^t(t-s)e^{\frac{\beta_k}{2}(s-t)}f(s) \,\mathrm ds.
	\end{align*}
	With the estimate
	\[
	\int_0^t t^2 e^{-\beta_k t}\,\mathrm dt= \frac{1}{\beta_k^3} (2-e^{-\beta_kt}(\beta_k^2 t^2+2\beta_kt+2))\leq \frac{2}{\beta_k^3} 
	\]
	we then compute by using  $(a+b)^2\leq 2 a^2+2b^2$ and the Cauchy-Schwarz inequality that
	\begin{align*}
		\intT \lambda_k( {a_k})^2(t)\,\mathrm dt+\intT ( {a_k}')^2(t)\,\mathrm dt\leq\ & (\vec \psi,\vec \varphi_k)^2_{L^2(\omsig)}\left(\lambda_k\frac{8 }{\beta_k^3}+\frac{12}{\beta_k}\right)+4(\alpha_k)^2\left(\lambda_k\frac{3}{\beta_k}+\frac{1}{2\beta_k}\right)\\
		&+\left(\frac{2\lambda_k}{\beta_k^3}+\frac{3}{\beta_k}\right)2T\intT f_k^2(s)\,\mathrm ds.
	\end{align*}
	At last, we use  $4\lambda_k=\beta_k^2$ to derive	
	\begin{align*}
		\intT \lambda_k( {a_k})^2(t)\,\mathrm dt+\intT ( {a_k}')^2(t)\,\mathrm dt\leq\ & (\vec \psi,\vec \varphi_k)^2_{L^2(\omsig)}\frac{14 }{\beta_{\min}}+(\alpha_k)^2\lambda_k\frac{5}{4\beta_{\min}^3}	  +\frac{14}{\beta_{\min}^3}T\| f_k\|^2_{L^2(0,T)}.
	\end{align*}
	Adding all three cases will give the desired estimate.	
\end{proof}
Using the above lemma we can finally prove the last statement of Theorem \ref{thm:ExistenceAndUniqueness}.
\begin{prop}\label{prop:inequalDepRhs}
	Let the Assumption~\ref{A:Assumptions_all} hold true, $\vec j_a\in L^2(Q;\R^d)$, and $ \vec A$ be the unique solution of \eqref{vf:H1VarForm}. Then there exists positive constants  $c_\phi$, $c_\phi^c$, $c_\psi$, and $c_f$ such that the following inequality holds true
	\begin{align}
		\label{ineqn:ineq_LinDep}
		|\vec A|^2_{H^{\cu;1}(Q)}  \leq c_\phi\|\vec \phi\|^2_{L^2_{\varepsilon}({\Omega})} +c_\phi^cT\|\cu_x\vec \phi\|_{L^2_\mu({\Omega})}+c_\psi T\|\vec \psi\|^2_{L^2_{\varepsilon}({\Omega})}  + c_f \max\{T,T^2\} \|\vec j_a\|^2_{L^2_{\varepsilon}({Q})}.
	\end{align}
	The constants  $c_\phi$, $c_\phi^c$, $c_\psi$, and $c_f$  depend on $\sup\sigma$, $\sigma_{\min}^{-1}$, $\frac{1}{\sqrt{\lambda_{0}}}$ and $\max_{\substack{k\in \N_0\\\beta_k^2-4\lambda_k<0}}(4-\frac{\beta_k^2}{\lambda_k})^{-1}$ which is bounded since $\beta_k$ is bounded by $\sup\sigma$ and the non-zero eigenvalues $\lambda_k$ of $\cu_x\mu^{-1}\cu_x$ increase monotonically for $k\to\infty$. The $\beta_k\in\R_+$ are defined by $\sigma$ such that
	\begin{align*}
		(\sigma \vec \varphi_k,\vec \varphi_l)_{L^2(\omsig)} = \beta_k \delta_{kl}
	\end{align*}
	for the fundamental system $\{\vec \varphi_k\}$ of Lem.~\ref{lem:FS-H_0curl}.
\end{prop}
\begin{proof}
	Again we split the domain $\Omega$ into the support  $\omsig = \Omega \cap \mathrm{supp}(\sigma)$ of the conductivity $\sigma$ and its complement $\Omega_0:=\Omega\backslash\omsig$.
	First we take a look at $\omsig$ and $Q_\sigma := (0,T)\times\omsig$. To show the inequality we consider the basis representation \eqref{eq:BasisRepresentation} for the unique solution $\vec A$ of  \eqref{vf:H1VarForm}
	\begin{align*}
		\vec A(t,x)=\sum\limits_{k\in\Z} {a_k}(t)\vec \varphi_k(x),
	\end{align*}
	$(t,x) \in Q_\sigma$, where $\vec \varphi_k$ are the basis  functions of $H(\cu;\omsig)$ from Lemma \ref{lem:FS-H_0curl} .
	If we insert the representation into the energy norm , we get
	\begin{align}
		\nonumber
		(\varepsilon\partial_t \vec A,& \partial_t \vec A)_{L^2(Q_\sigma)}+(\mu^{-1}\cu_x \vec A,\cu_x \vec A)_{L^2(Q_\sigma)}\\
		\nonumber
		&\hspace*{.2cm}= \sum_{k,j\in\Z}  {( \partial_t {a_k},\partial_t c_j)_{(0,T)}(\varepsilon\vec \varphi_k,\vec \varphi_j)_{\omsig} + ({a_k}, c_j)_{(0,T)} (\mu^{-1}\cu_x\vec \varphi_k,\cu_x\vec \varphi_j)_{\omsig}}\\
		\label{eqn:energyNorm_linDep}
		&\hspace*{.2cm}=\sum_{k\in\N_0} {\norm{{a_k}'}_{L^2(0,T)}^2 +  \norm {\lambda_k^{1/2} {a_k}}_{L^2(0,T)}^2
			+\sum_{k\in \Z\backslash\N_0}\norm{{a_k}'}_{L^2(0,T)}^2}.
	\end{align}
	In the proof of Prop.~\ref{prop:Existence_H1_drhs} we saw that $ {a_k}$ satisfies one of two ordinary differential equations depending on the case of $k\in\N_0$ and $k\in\Z\backslash\N_0$. The first is
	\begin{align}
		\nonumber
		 {a_k}''(t)+ \beta_k  {a_k}'(t) &= f_k(t),\\
		\label{eqn:FirstDgl_LinDep}
		 {a_k}'(0) &= (\vec \psi,\vec \varphi_k)_{L^2(\omsig)},\\
		\nonumber
		 {a_k} (0) &=\alpha_k,
	\end{align}
	for  {$k\in \Z\backslash\N_0$} and the second is
	\begin{align}
		\nonumber
		 {a_k}''(t)+ \beta_k  {a_k}'(t)+ \lambda_k  {a_k}(t) &= f_k(t),\\
		\label{eqn:SecondDgl_LinDep}
		 {a_k}'(0) &= (\vec \psi,\vec \varphi_k)_{L^2(\omsig)},\\
		\nonumber
		 {a_k} (0) &=\alpha_k,
	\end{align}
	for $t\in (0,T)$ and $k\in \N_0$, where $f_k(t) =(\vec j_a,\vec \varphi_k)_{L^2(\omsig)} $ and $\lambda_k$ are the non-zero eigenvalues of the $\cu_x \mu^{-1} \cu_x$ operator.
	
	Let us first estimate $ {a_k}$ in the case that $k\in \Z\backslash\N_0$. Then we consider the first equation \eqref{eqn:FirstDgl_LinDep}, and compute the solution  
	\[ {a_k}(t)=\ \alpha_k+ \frac{1}{\beta_k}  (\vec \psi,\vec \varphi_k)_{L^2(\omsig)} (1 -e^{-\beta_k t}) + \frac{1}{\beta_k }\int\limits_0^t(1- e^{\beta_k (s-t)})f_k(s)\,\mathrm ds. \]	
	For this solution, we need to estimate the $L^2$-norm of $ {a_k}'$. Using $(a+b)^2 \leq 2a^2 + 2 b^2$, we arrive with the Cauchy-Schwarz inequality at
	\begin{align}
		\nonumber
		\intT ( {a_k}')^2\,\mathrm dt &\leq2(\vec \psi,\vec \varphi_k)_{L^2(\omsig)}^2\intT (  e^{-2\beta_k t})\,\mathrm dt +   2 \intT   \int\limits_0^t e^{2\beta_k (s-t)}\,\mathrm ds\int\limits_0^t f_k^2(s)\,\mathrm ds \,\mathrm dt\\
		\nonumber
		&\leq(\vec \psi,\vec \varphi_k)_{L^2(\omsig)}^2\frac{1- e^{-2\beta_k T}}{\beta_k} +   2 \frac{1-e^{-2\beta_k T}}{2\beta_k}T \|f_k\|_{L^2(0,T)}^2\\
		\label{ineqn:kinJ}
		&\leq (\vec \psi,\vec \varphi_k)_{L^2(\omsig)}^2\frac{1}{\beta_{\min}} +   \frac{1}{\beta_{\min}}T \|f_k\|_{L^2(0,T)}^2,
	\end{align}
	where $\beta_{\min} := \min_k \beta_k\geq\sigma_{\min} \varepsilon_{\max}^{-1}>0$.
	
	Second, we want to estimate $ {a_k}$ in the second case $k\in \N_0$. In this case, we can apply Lem.~\ref{lem:ODE2} for the second equation \eqref{eqn:SecondDgl_LinDep}. Hence we get the estimate 
	\begin{align}	
		\label{ineqn:kinI}
		\sum_{k\in \N_0} {\norm{{a_k}'}_{L^2(0,T)}^2 +  \norm {\lambda_k^{1/2} {a_k}}_{L^2(0,T)}^2}\leq \sum_{k\in \N_0} c_\psi (\vec \psi,\vec \varphi_k)^2_{L^2(\omsig)}+(c_\alpha^\lambda \lambda_k +c_\alpha ) (\alpha_k)^2 + c_f T\|f_k\|^2_{L^2(0,T)}.
	\end{align}
	Since $\frac{1}{\beta_{\min}}<c_\psi$ and $\frac{1}{\beta_{\min}}<c_f  $ we can combine (\ref{ineqn:kinI}) and  (\ref{ineqn:kinJ}) in \eqref{eqn:energyNorm_linDep} to arrive at
	\begin{align}
		\label{ineqn:kinN}
		\sum_{k\in\Z}  {\norm{{a_k}'}_{L^2(0,T)}^2 +  \norm {\lambda_k^{1/2} {a_k}}_{L^2(0,T)}^2  }\leq\sum_{k\in\N_0}&
		\left( c_\phi^1+ c_\phi^2\lambda_k\right)\alpha_k^2 +\sum_{k\in\Z}\big[c_\psi  (\vec \psi,\vec \varphi_k)_{L^2(\omsig)}^2 + c_fT\|f_k\|^2_{L^2(0,T)} \big].
	\end{align}
	As $\lambda_k$ were eigenvalues of the $\cu_x\mu^{-1}\cu_x$-opertor, we want to eliminate the $\lambda_k$ from our right hand side. 	Considering $\vec \phi = \sum_{k\in\Z} \alpha_k \vec \varphi_k$ and  $\cu_x \vec \phi = \sum_{k\in\N_0} \alpha_k \cu_x \vec \varphi_k$ we compute
	\begin{align*}
		\sum_{k=0}^\infty \lambda_k(\alpha_k)^2 &= \sum_{k=0}^\infty\sum_{l=0}^\infty \alpha_k\alpha_l \lambda_k(\varepsilon\vec \varphi_k,\vec \varphi_l)_{L^2(\omsig)}\\
		&=\sum_{k=0}^\infty\sum_{l=0}^\infty \alpha_k\alpha_l(\mu^{-1}\cu_x\vec \varphi_k,\cu_x\vec \varphi_l)_{L^2(\omsig)}\\
		&=(\mu^{-1}\cu_x\vec \phi,\cu_x\vec \phi)_{L^2(\omsig)}.
	\end{align*}
	Using the basis representation for $\vec \phi$ and $\vec j_a$, where $f_k =(\vec j_a,\vec \varphi_k)_{L^2(\omsig)}$, respectively in (\ref{ineqn:kinN}) we then arrive at
	\begin{align*}
		(\varepsilon\partial_t \vec A, \partial_t \vec A)_{L^2(Q_\sigma)}+&(\mu^{-1}\cu_x  \vec A,\cu_x  \vec A)_{L^2(Q_\sigma)}\\
		\leq&\, c_\psi\|\vec \psi\|^2_{L^2_{\varepsilon}({\omsig})}  +c_\phi\|\vec \phi\|^2_{L^2_{\varepsilon}({\omsig})} + c_\phi^c\|\cu_x\vec \phi\|^2_{L^2_{\mu}({\omsig})} + c_f T \|\vec j_a\|^2_{L^2_{\varepsilon}({Q_\sigma})}.
	\end{align*}
	
	For the solution over $\Omega_0 = \Omega\backslash\overline{\omsig}$ and $Q_0:=(0,T)\times\Omega_0 $, where $\sigma \equiv 0$,  we get the ordinary equation  given by  \eqref{eqn:ode2_sigma_equal_0} for $k \in \Z$.  The equation \eqref{eqn:ode2_sigma_equal_0} for $\lambda_k =0$ yields the estimate  
		\[\norm{ {a_k}'}_{L^2(0,T)}^2 \leq T^2 \|f_k\|^2_{L^2(0,T)}. \]
		On the other hand, if $\lambda_k >0$, then the equation  \eqref{eqn:ode2_sigma_equal_0} has the solution
	\[
	 {a_k}(t) = \frac{1}{\sqrt{\lambda_k}}(\vec \psi,\vec \varphi_k)_{L^2(\Omega_0)}\sin(\sqrt{\lambda_k}t) + \alpha_k\cos(\sqrt{\lambda_k}t) + \frac{1}{\sqrt{\lambda_k}} {(f_k(.),\sin(\sqrt{\lambda_k}(t-.)))_{L^2(0,t)}}.
	\]
	To estimate the $L^2(0,T)$ norm of $ a_k$ and $ a_k'$, we follow the same steps as above and derive
	\begin{align}
		\label{ieqn:outhatOmega}
		(\varepsilon\partial_t \vec A, \partial_t \vec A)_{L^2(Q_0)}+&(\mu^{-1}\cu_x  \vec A,\cu_x  \vec A)_{L^2(Q_0)} \\
		\nonumber
		\leq&	4T\|\vec \psi\|_{L^2_{\varepsilon}(\Omega_0)}^2 +4T\|\cu_x \phi\|^2_{L^2_{\mu}(\Omega_0)} +  T^2\|j_a\|^2_{L^2_\varepsilon(Q_0)}.
	\end{align}
	Note that we get a $T$ in our estimates by estimating $\intT \sin^2(\sqrt{\lambda_k}t)\,\mathrm dt \leq T$ and the same integral with cosines.
	Adding the inequality (\ref{ieqn:outhatOmega}) brings us to the desired inequality (\ref{ineqn:ineq_LinDep})
	\begin{align*}
		| \vec A |_{H^{\cu;1}(Q)}	\leq \tilde{c}_\psi T\|\vec \psi\|^2_{L^2_{\varepsilon}({\Omega})}  +c_\phi\|\vec \phi\|^2_{L^2_{\varepsilon}({\Omega})} +\tilde{c}_\phi^cT \|\cu_x\vec \phi\|^2_{L^2_{\mu}({\Omega})} + c_f \max\{T,T^2\} \|\vec j_a\|^2_{L^2_{\varepsilon}({Q})}.
	\end{align*}
\end{proof}

\begin{rmk}
	Note that with the tools of Ladyzhenskaya \cite[Thm.~3.2, p.~160]{Ladyzhenskaya1985} one can translate the results of the scalar wave equation to the vectorial wave equation using the same technique as the above theorem, see \cite{HauserDiss2021} for more details. Then we would derive the estimate
	\begin{align}
		\label{eqn:ladyschenskaya_absch}
		(\varepsilon\partial_t \vec A, \partial_t \vec A)_{L^2(Q)}+(\mu^{-1}\cu_x &\vec A,\cu_x  \vec A)_{L^2(Q)} \\
		\nonumber
		\leq&	4T\|\vec \psi\|_{[L^2(\Omega)]^d}^2 +4T\|\cu_x \vec \phi\|^2_{L^2_{\mu}(\Omega)} +  T^2\|\vec j_a\|^2_{[L^2(Q)]^d}.
	\end{align}
	for the solution of the variational formulation:\\
	Find $ \vec A\in H^{\cu;1}_{0;} (Q) $ with $\vec A(0,.) = \vec \phi$ such that
	\begin{align*}
		-(\varepsilon\partial_t \vec A,\partial_t \vec v)_{L^2(Q)}+ (\mu^{-1}	\cu_x  \vec A,\cu_x \vec v)_{L^2(Q)} &= (\vec j_a, \vec v)_{L^2(Q)} -(\varepsilon	\vec \psi,\vec v(0,\cdot))_{L^2(\Omega)}
	\end{align*}
	for all $\vec v\in H^{\cu;1}_{0;,0}(Q) $. 
\end{rmk}

\section{A space-time finite element method in a tensor product}
In Theorem \ref{thm:ExistenceAndUniqueness} we have proven the unique solvability of the variational formulation \eqref{vf:H1VarForm}. Now we take a look at the discrete equivalent of the variational formulation for the vectorial wave equation. First, we define the finite element spaces that we will be using. Then we derive the finite element discretization and analyze the resulting linear equation. Afterward, we derive a CFL condition  that indeed seems to be necessary for stable computations of the numerical examples.
\subsection{Space-time finite element spaces}\label{sec:FES}
Before we compute any examples, we recap the main ideas of the space-time discretization of the vectorial wave equation as was presented in \cite{HauserZank2023}. We will use similar nomenclature to ease the comparison of the results of both papers. The main difference to \cite{HauserZank2023} is the addition of the conductivity $\sigma$ in our equation and that we consider second-order elements in time instead of first-order elements as in \cite{HauserZank2023}.

For the discretization in time we decompose the time interval $(0,T)$ with $N_t^2+1$ points
\begin{equation*}
	0 = t_0 < t_1 < \dots < t_{N_t^2-1} < t_{N_t^2} = T
\end{equation*}
into $N_t^2$ subintervals  $\tau_\ell = (t_{\ell-1}, t_\ell) \subset \R$, $\ell=1,\dots, N_t^2.$ We define the local mesh size of each element $\tau_l$ as $h_{t,\ell} = t_\ell - t_{\ell-1}$, $\ell=1,\dots, N_t^2,$ with the maximal mesh size $h_t := \max_{\ell=1,\dots, N_t^2} h_{t,\ell}.$ 

For the discretization in space we decompose the spatial domain $\Omega \subset \R^d$ into $N^x$ elements $\omega_i \subset \R^d$ for $i=1,\dots, N^x$, satisfying
\begin{equation*}
	\overline{\Omega} = \bigcup_{i=1}^{N^x} \overline{\omega_{i}}\qquad\text{and}\qquad \omega_{i}\cap \omega_{j} = 0 \qquad\text{for } i\neq j.
\end{equation*}
 We define  the local mesh size in $\omega_i$ as  $h_{x,i} := \left( \int_{\omega_i} 1 \mathrm dx \right)^{1/d}$, $i=1,\dots, N^x$, the maximal mesh size as $h_x := h_{x,\max}(\mathcal T^x_\nu) := \max_{i=1,\dots, N^x} h_{x,i}$ and the minimal mesh size as $h_{x,\min}(\mathcal T^x_\nu) := \min_{i=1,\dots, N^x} h_{x,i}$. The parameter $\nu \in \N$ is the level of refinement and the spatial mesh is the set $\mathcal T^x := \mathcal T^x_{\nu} = \{ \omega_i \}_{i=1}^{N^x}$ at level $\nu$. For simplicity, we only consider triangles for $d=2$ and tetrahedra for $d=3$ as the spatial elements $\omega_i \subset \R^d$, $i=1,\dots, N^x$.

\begin{assumption} \label{A:Assumptions_mesh}
	We assume that the sequence $( \mathcal T^x_\nu )_{\nu \in \N}$ consists of shape-regular, globally quasi-uniform and admissible spatial meshes $\mathcal T^x_\nu$. 
\end{assumption}

 By the shape-regularity of the mesh sequence $( \mathcal T^x_\nu )_{\nu \in \N}$  there is a constant $c_\mathrm F > 0$ such that
\begin{equation} \label{FES:formregular}
	\forall \nu \in \N: \forall \omega \in \mathcal T^x_\nu: \quad \sup_{x,y \in \overline{\omega} } \norm{x - y}_2 \leq c_\mathrm F\, r_\omega,
\end{equation}
where  $\norm{\cdot}_2$ is the Euclidean norm in $\R^d$, and $r_\omega >0$ is the radius of the largest ball that can be inscribed in the element $\omega$.  By the global quasi-uniformity of the mesh sequence $(\mathcal T^x_\nu)_{\nu \in \N}$   there is a constant $c_\mathrm G \geq 1$ such that
\begin{equation*}
	\forall \nu \in \N\colon \quad \frac{h_{x,\max}(\mathcal T^x_\nu)}{h_{x,\min}(\mathcal T^x_\nu)} \leq c_\mathrm G.
\end{equation*}
 On the other hand, a mesh is admissible if for each two elements $\tau_i,\tau_j\in \mathcal T^x_\nu$ the intersection $S = \tau_i\cap \tau_j$ is either empty or a simplex of the same type belonging to both elements.

With the discretization of the time interval and spatial domain in mind, we now define finite element spaces. For the test and ansatz space in time, we define the space of piecewise quadratic, continuous functions
\begin{equation*}
	S^2(\mathcal T^t) := \Big\{ v_{h_t} \in C[0,T]\, |\, \forall \ell \in \{ 1, \dots, N_t^2 \}: v_{h_t|\overline{\tau_\ell}} \in P^2(\overline{\tau_\ell})  \Big\}  = \mathrm{span} \{\varphi^2_\ell\}_{\ell=0}^{N_t^2}
\end{equation*}
with the usual nodal basis functions $\varphi^2_\ell$, satisfying $\varphi^2_\ell(t_\kappa)=\delta_{\ell \kappa}$ for $\ell,\kappa=0,\dots, N_t^2$. The space $P^p(B)$ is the space of polynomials on a subset $B \subset \R$ of global degree at most $p \in \N_0$, and $\delta_{\ell \kappa}$ is the Kronecker delta. Then, we introduce the subspaces which include initial and end conditions
\begin{align*}
	S_{0,}^2(\mathcal T^t) &:= S^2(\mathcal T^t) \cap H^1_{0,}(0,T) = \mathrm{span} \{\varphi^2_\ell\}_{\ell=1}^{N_t^2}, \\
	S_{,0}^2(\mathcal T^t) &:= S^2(\mathcal T^t) \cap H^1_{,0}(0,T) = \mathrm{span} \{\varphi^2_\ell\}_{\ell=0}^{N_t^2-1}.
\end{align*}

In space, we introduce the vector-valued finite element spaces of Nédélec and Raviart--Thomas finite elements. For a spatial element $\omega \in \mathcal T^x_\nu$, we define the polynomial spaces
\begin{equation*}
	\mathcal{RT}^0(\omega) := \Big\{ \vec v \in \mathbb [P^1(\omega)]^d\,|\, \forall \vec x \in \omega: v(\vec x) = \vec a + b \vec x \text{ with } \vec a \in \R^d, b \in \R \Big\}
\end{equation*}
and
\begin{equation*}
	\mathcal N_\mathrm{I}^0(\omega) := \Big\{ \vec v \in \mathbb [P^1(\omega)]^2\,|\, \forall (x_1,x_2) \in \omega: v(x_1,x_2) = \vec a + b \cdot (-x_2,x_1)^\top \text{ with } \vec a \in \R^2, b \in \R \Big\}
\end{equation*}
for $d=2$, and
\begin{equation*}
	\mathcal N_\mathrm{I}^0(\omega) := \Big\{ \vec v \in \mathbb [P^1(\omega)]^3\,|\, \forall \vec x \in \omega: v(\vec x) = \vec a + \vec b \times \vec x
	\text{ with } \vec a \in \R^3, \vec b \in \R^3 \Big\}
\end{equation*}
for $d=3$, see \cite[Section~14.1, Section~15.1]{ErnGuermond2020I} for a more thorough introduction. With this notation, we define the space of the lowest-order Raviart--Thomas finite element space and the lowest-order Nédélec finite element space of the first kind
\begin{align*}
	\mathcal{RT}^0(\mathcal T^x_\nu) &:= \left\{ \vec v_{h_x} \in H(\mathrm{div};\Omega) \,| \, \forall \omega \in \mathcal T^x_\nu : \vec v_{h_x| \omega} \in \mathcal{RT}^0(\omega) \right\} = \mathrm{span} \{\vec \psi_k^{\mathcal{RT}} \}_{k=1}^{N_x^{\mathcal{RT}}},\\
	\mathcal N_\mathrm{I}^0(\mathcal T^x_\nu) &:= \left\{ \vec v_{h_x} \in H(\cu;\Omega) \,|\, \, \forall \omega \in \mathcal T^x_\nu : \vec v_{h_x| \omega} \in \mathcal N_\mathrm{I}^0(\omega) \right\},
\end{align*}
see  \cite{ErnGuermond2020I,Monk} for more details on these spaces. Further, we consider the subspace of $\mathcal N_\mathrm{I}^0(\mathcal T^x_\nu)$ with zero  tangential trace
\begin{equation*}
	\mathcal N_\mathrm{I,0}^0(\mathcal T^x_\nu) := \mathcal N_\mathrm{I}^0(\mathcal T^x_\nu) \cap H_0(\cu;\Omega) = \mathrm{span} \{\vec \psi_k^{\mathcal{N}} \}_{k=1}^{N_x^{\mathcal{N}}}.
\end{equation*}
 {Let us consider examples of shape functions for both $\mathcal N_\mathrm{I}^0(\mathcal T^x_\nu)$ which associates with the edges and $\mathcal{RT}^0(\mathcal T^x_\nu)$ which associates with the the faces. For an edge $e_{ij}$ going from point $p_i$ to $p_j$ the corresponding shape function would look like $\psi^{\mathcal{N}}_{e_{ij}} = \lambda_i\nabla \lambda_j+\lambda_j\nabla \lambda_i)$, where $\lambda_i$ and $\lambda_j$ are the corresponding barycentric function to $p_i$ and $p_j$. On the other hand, an example for the lowest order shape functions associated to a face $F_{ijk}$ consisting of the three points $p_i$, $p_j$ and $p_k$ would look like $\psi_{F_{ijk}}^{\mathcal{RT}}=\lambda_i\nabla \lambda_j\times \nabla \lambda_k +\lambda_j\nabla \lambda_k\times \nabla \lambda_i +\lambda_k\nabla \lambda_i\times \nabla \lambda_j $, see \cite[Section~14.1, Section~15.1]{ErnGuermond2020I} for more details.}

Last, the temporal and spatial meshes $\mathcal T^t = \{ \tau_\ell \}_{\ell=1}^{N_t^2}$ and $\mathcal T^x_\nu = \{ \omega_i \}_{i=1}^{N^x}$ lead to a decomposition
\begin{equation*}
	\overline{Q} = [0,T] \times \overline{\Omega} = \bigcup_{\ell=1}^{N_t^2} \overline{\tau_\ell} \times \bigcup_{i=1}^{N^x} \overline{\omega_i}
\end{equation*}
of the space-time cylinder $Q \subset \R^{d+1}$ with $N_t^2\cdot N^x$ space-time elements. Therefore $\mathcal T^t  \times \mathcal T^x_\nu$ is a space-time tensor product mesh. To this space-time mesh, we relate space-time finite element spaces of tensor-product type using $\hat{\tens}$ as the Hilbert tensor-product. These spaces are
\begin{equation} \label{FES:TPRaueme}
	S_{0,}^2(\mathcal T^t) \hat{\tens}\mathcal N_\mathrm{I,0}^0(\mathcal T^x_\nu) \quad \text{ and } \quad S_{,0}^2(\mathcal T^t) \hat{\tens}\mathcal N_\mathrm{I,0}^0(\mathcal T^x_\nu).
\end{equation}
Then, any function $\vec A_h \in S_{0,}^2(\mathcal T^t) \hat{\tens}\mathcal N_\mathrm{I,0}^0(\mathcal T^x_\nu)$ admits the representation
\begin{equation} \label{eqn:DefAh}
	\vec A_h(t,x) = \sum_{\kappa=1}^{N_t^2} \sum_{k=1}^{N_x^\mathcal{N}} \mathcal A_k^\kappa \varphi^2_\kappa(t) \vec \psi^\mathcal{N}_k(x)
\end{equation}
for $ (t,x) \in \overline{Q}$ with coefficients $\mathcal A_k^\kappa \in \R$. Further, for a given function $\vec f \in L^2(Q; \R^d)$, we introduce the $L^2(Q)$ projection  $\Pi_h^{\mathcal{RT},1} \colon \, L^2(Q; \R^d) \to S^1(\mathcal T^t) \hat{\tens} \mathcal{RT}^0(\mathcal T^x_\nu)$ to find $\Pi_h^{\mathcal{RT},1} \vec f \in S^1(\mathcal T^t) \hat{\tens} \mathcal{RT}^0(\mathcal T^x_\nu)$ such that
\begin{equation} \label{FES:L2RT}
	(\Pi_h^{\mathcal{RT},1} \vec f, \vec w_h)_{L^2(Q)} = (\vec f, \vec w_h)_{L^2(Q)}.
\end{equation}
for all $ \vec w_h \in S^1(\mathcal T^t) \hat{\tens} \mathcal{RT}^0(\mathcal T^x_\nu)$. Since we need enough regularity of the right-hand side such that the continuity equation \eqref{eqn:ContinuityEq} is included in the vectorial wave equation, we require that the divergence of the right-hand side $j_a$ is well defined. Therefore we are interested in the projection $\Pi_h^{\mathcal{RT},1} $. On the other side, we see in \cite{HauserZank2023} what kind of effect another projection can have, namely  the production of so-called `spurious modes'. 

With these definitions in mind, we can now formulate a conforming finite element approach for the variational formulation \eqref{vf:H1VarForm}.  We use $S^2(\mathcal T^t) \hat{\tens}\mathcal N_\mathrm{I,0}^0(\mathcal T^x_\nu)$ as trial space  and $S_{,0}^2(\mathcal T^t) \hat{\tens}\mathcal N_\mathrm{I,0}^0(\mathcal T^x_\nu)$ as test space. Then we end up with the discrete variational formulation:\\
Find $ \vec A_h\in  S^2(\mathcal T^t) \hat{\tens}\mathcal N_\mathrm{I,0}^0(\mathcal T^x_\nu)$, with $\vec A_h(0,x)=\vec \phi$, such that
\begin{align}\label{vf:H1_disc}
	-\left(\varepsilon\partial_t \vec A_h, \partial_t \vec v_h \right)_{L^2(Q)}+\left(\sigma\partial_t \vec A_h, \vec v_h \right)_{L^2(Q)}  +&\left(\mu^{-1}	\mathrm{curl}_x \vec A_h, \mathrm{curl}_x \vec v_h \right)_{L^2(Q)}\\
	\nonumber
	& = \left(\Pi_h^{\mathcal{RT},1} \vec j_a, \vec v_h \right)_{L^2(Q)} - \left(\varepsilon\vec \psi, \vec v_h (0,.)\right)_{L^2(\Omega)}
\end{align}
for all $\vec v_h \in  S_{,0}^2(\mathcal T^t) \hat{\tens}\mathcal N_\mathrm{I,0}^0(\mathcal T^x_\nu)$.

\subsection{The finite element discretization}
Next, we follow the same steps as \cite{HauserZank2023} to derive the finite element discretization, but include additional terms involving the conductivity $\sigma$.  Let, for a moment, the initial condition $\vec\phi$ be zero. Then we insert \eqref {eqn:DefAh} into the  discrete variational formulation (\ref{vf:H1_disc}). Afterward, the integrals split into temporal and spatial matrices.
Hence, the discrete variational formulation~\eqref{vf:H1_disc} is equivalent to the linear system
\begin{equation} \label{FEM:LGS}
	(-A_{tt} \otimes M_{x} + A_{t} \otimes M_{x}^{\sigma}  + M_{t} \otimes A_{xx}) \vec{ \mathcal A }= \vec{\mathcal J}
\end{equation}
with the temporal matrices
\begin{subequations}  \label{FEM:Ohne:Zeitmatrizen}
	\begin{equation}
		A_{tt}[\ell,\kappa] = (\partial_t \varphi^2_\kappa, \partial_t \varphi^2_\ell)_{L^2(0,T)}, \quad M_{t}[\ell,\kappa] = (\varphi^2_\kappa, \mathcal  \varphi^2_\ell)_{L^2(0,T)},
	\end{equation}
	\begin{equation}
		A_{t}[\ell,\kappa] = (\partial_t \varphi^2_\kappa,  \varphi^2_\ell)_{L^2(0,T)}
	\end{equation}
\end{subequations}
for  $\ell=0,\dots,N_t^2-1$, $\kappa=1,\dots,N_t^2$ and the spatial matrices
\begin{subequations}
	\label{FEM:Ortsmatrizen}
	\begin{equation} \label{FEM:Ortsmatrizen1}
		A_{xx}[l,k] = (\mu^{-1}\cu_x \vec \psi^\mathcal{N}_k, \cu_x \vec \psi^\mathcal{N}_l)_{L^2(\Omega)}, \quad M_{x}[l,k] = (\varepsilon \vec \psi^\mathcal{N}_k, \vec \psi^\mathcal{N}_l)_{L^2(\Omega)},
	\end{equation}
	\begin{equation} \label{FEM:Ortsmatrizen2}
		M_{x}^{\sigma}[l,k] = (\sigma \vec \psi^\mathcal{N}_k, \vec \psi^\mathcal{N}_l)_{L^2(\Omega)}
	\end{equation}
\end{subequations}
for $k,l=1,\dots,N_x^\mathcal{N}.$ The degrees of freedom are ordered such that the vector of coefficients in  \eqref{eqn:DefAh} reads as
\begin{equation} \label{FEM:Ohne:VectorAh}
	\vec{A} = ( \vec{A}^1, \vec{A}^2, \dots, \vec{A}^{N_t^2} )^\top \in \R^{N_t^2 N_x^\mathcal{N}}
\end{equation}
with
\begin{equation*}
	\vec{A}^\kappa = (\mathcal A_1^\kappa, \mathcal A_2^\kappa, \dots, \mathcal A_{N_x^\mathcal{N}}^\kappa)^\top \in \R^{N_x^\mathcal{N}} \quad \text{ for } \kappa \in \{1,\dots, N_t^2\}.
\end{equation*}
The right-hand side in \eqref{FEM:LGS} is given in the same way by
\begin{equation*}
	\vec{\mathcal J} = ( \vec f^0, \vec f^1, \dots, \vec f^{N_t^2-1} )^\top \in \R^{N_t^2 N_x^\mathcal{N}}, 
\end{equation*}
where we define
\begin{equation*}
	\vec f^\ell = (f_1^\ell, f_2^\ell, \dots, f_{N_x^\mathcal{N}}^\ell)^\top \in \R^{N_x^\mathcal{N}} 
\end{equation*}
with
\begin{equation} \label{FEM:Ohne:RSf}
	f_l^\ell = (\Pi_h^{\mathcal{RT},1} \vec j,\varphi^2_\ell \vec \psi^\mathcal{N}_l )_{L^2(Q)} -\varphi^2_\ell(0)  \left(\varepsilon\vec \psi, \vec \psi^\mathcal{N}_l\right)_{L^2(\Omega)} 
\end{equation}
for  $\ell=0,\dots, N_t^2-1$ and   $l=1,\dots,N_x^\mathcal{N}.$
\paragraph{Remark}
If we have non-zero initial condition $\vec \phi$, then  we consider a continuous extension in time $\tilde{\phi}$ that is zero on all other degrees of freedom in time. Hence, for our application we use the representation (\ref{eqn:DefAh}) to derive for $t=0$
\begin{align*}
	\tilde{\phi}_{h}(0,x) = \sum_{k = 1}^{N_x^{\mathcal{N}}} A_k^0 \vec{\psi}^{\mathcal{N}}_k(x)  \approx \tilde{\phi}(0,x) .
\end{align*}
Then, using the ordering of the degrees of freedom from Section~\ref{sec:FES}, we end up with the system
\begin{align}
	\label{eqn:sys_space_time_tensor}
	(-A_{tt} \otimes M_{x} + A_{t} \otimes M_{x}^{\sigma}  + M_{t} \otimes A_{xx}) \vec{ \mathcal A } =  \vec{\mathcal J}-\vec{A}_{Ini},
\end{align}
where
\begin{align*}
	\vec A_{Ini} &\coloneqq (\vec A_{Ini}^0,\dots, \vec A_{Ini}^{N_t^2-1}),\quad 
	\vec A_{Ini}^{\ell} \coloneqq (A_{Ini;1}^{\ell},\dots, A_{Ini;N_x^{\mathcal{N}}}^{\ell}),\\
	A_{Ini;l}^{\ell}&\coloneqq \sum_{k=1}^{N_x^{\mathcal{N}}} (-A_{tt}[\ell,0] M_{x}[l,k] + A_{t}[\ell,0]  M_{x}^{\sigma}[l,k]  + M_{t} [\ell,0] A_{xx}[l,k])A_k^0 
\end{align*}
for $\ell = 0,\dots, N_t^2-1 $ and $l = 1,\dots, N_x^{\mathcal{N}} $ with the matrices defined in \eqref{FEM:Ortsmatrizen}, \eqref{FEM:Ohne:Zeitmatrizen} and
\begin{equation*}
	A_{tt}[\ell,0]  \coloneqq (\partial_t \varphi^2_0, \partial_t \varphi^2_\ell)_{L^2(0,T)},\quad	
	M_{t} [\ell,0]  \coloneqq  (\varphi^2_0, \mathcal  \varphi^2_\ell)_{L^2(0,T)},
\end{equation*}
\begin{equation*}
	A_{t}[\ell,0]  	\coloneqq (\partial_t \varphi^2_0,  \varphi^2_\ell)_{L^2(0,T)}.
\end{equation*}

\subsection{The CFL condition}\label{sec:cfl-condition}
If we solve the equation \eqref{FEM:LGS} for simple examples, we see conditional stability in our results which hints at a CFL condition. To compute this CFL condition for functions with second-order elements in time we use a similar tactic as \cite{SteinbachZankETNA2020} did for linear elements. To simplify the calculation we assume that we have an equidistant discretization in time with step size $h_t$. 

First let $\sigma = 0$. Note that, since $\sigma$ acts like a stabilizer to our system, the  case  $\sigma \equiv 0$ is the hardest case. Let $\lambda_k$, $k\in \N_0$, be the eigenvalues of the $\text{curl} \mu^{-1} \text{curl}$ operator in the weighted $L^2_\varepsilon(\Omega)$-norm from Lemma~\ref{lem:FS-H_0curl}. To analyze the stability of the discrete system we choose initial data $\vec \phi=0$, $\vec \psi =0$ as well as the right-hand side $\vec j_a=0$. Now, we consider the basis ansatz from Section~\ref{sec:BasisRep} for $\vec A\in H^{\textmd{curl};1}_{0;0,}(Q)$ and write
\begin{align*}
	\vec A^N = \sum\limits_{k=-N}^N \tilde{A}_k(t)\vec \varphi_k(x)
\end{align*}

Then we can rewrite the variational formulation \eqref{vf:H1VarForm} into a variational formulation in the time domain only by using the properties of the eigenvalues $\lambda_i$ of the spatial operator $\text{curl} \mu^{-1} \text{curl}$. We end up with the set of variational formulations: Find $\tilde{A}_k \in H^1_{0,}(0,T) $ such that
\begin{align}\label{vf:TimeVF}
	-	(\partial_t \tilde{A}_k,\partial_t  v)_{(0,T)}  +\lambda_k	( \tilde{A}_k,  v)_{(0,T)}  = 0
\end{align}
for all  $ v\in  H^1_{,0}(0,T)  $ and all non-zero eigenvalues $\lambda_k>0$ where $k\in\N_0$. On the other hand, for $k\in \Z\backslash \N_0$, we end up with the formulation: Find $\tilde{A}_k \in H^1_{0,}(0,T) $ such that
\begin{align*}
	(\partial_t \tilde{A}_k,\partial_t  v)_{(0,T)} = 0
\end{align*}
for all  $ v\in  H^1_{,0}(0,T)$. This is equivalent to the Laplace equation and does not add to the numerical instabilities that can be observed in section \ref{sec:results}. 

To derive the appropriate CFL condition, we need to analyze the stability of \eqref{vf:TimeVF} in the discrete  case.  Using again the ansatz and test spaces $S_{0,}^2(\mathcal T^t)$  and $S_{,0}^2(\mathcal T^t)$ we get the discrete formulation:\\
Find $\tilde{A}_k^h \in S_{0,}^2(\mathcal T^t) $ such that
\begin{align*}
	-	(\partial_t \tilde{A}_k^h,\partial_t  v_h )_{(0,T)}  +\lambda_k	( \tilde{A}_k^h,  v_h )_{(0,T)}  = 0
\end{align*}
for all  $ v_h \in S_{,0}^2(\mathcal T^t)  $ and all non-zero eigenvalues $\lambda_k$, $k\in\N_0$.  This is equivalent to analyzing the linear system
\begin{align}\label{eq:TimeDiscrete}
	(-	A_{tt} +\lambda_k M_t)  \vec{\tilde{A}} = 0
\end{align}
for its stability.  The temporal matrices $A_{tt}$ and $M_t$ are defined in \eqref{FEM:Ohne:Zeitmatrizen} and the solution $ \vec{\tilde{A}}=(\tilde{A}_k(t_1),\dots,\tilde{A}_k(T))^T\in \R^{N_t^2}$ is the coefficient vector  of the basis representation $\tilde{A}_k^h(t) = \sum_{\kappa=1}^{N_t^2}\tilde{A}_k(t_\kappa)\varphi_\kappa^2(t).$ For a moment let us write $\lambda = \lambda_k$ and $h=h_t$. Then we compute the element matrices
\begin{align*}
	M^e_t &= \frac{h}{30} \begin{pmatrix}
		4&2&-1\\2&16&2\\-1&2&4
	\end{pmatrix}, &A_{tt}^e = \frac{1}{3h} \begin{pmatrix}
		7&-8&1\\-8&16&-8\\1&-8&7
	\end{pmatrix}.
\end{align*}
By using the element matrices to assemble the system matrix we compute
\begin{align*}
	K_{\text{sys}} = \begin{pmatrix}
		\frac{8}{3h}+\frac{\lambda h}{15}& -\frac{1}{3h}-\frac{\lambda h}{30}&&&&&&\\
		\frac{-16}{3h}+\frac{8\lambda h}{15}& \frac{8}{3h}+\frac{\lambda h}{15}&&&&&&\\
		\frac{8}{3h}+\frac{\lambda h}{15}& -\frac{14}{3h}+\frac{4\lambda h}{15}&\frac{8}{3h}+\frac{\lambda h}{15}& -\frac{1}{3h}-\frac{\lambda h}{30}&&&&\\
		&\frac{8}{3h}+\frac{\lambda h}{15}&\frac{-16}{3h}+\frac{8\lambda h}{15}& \frac{8}{3h}+\frac{\lambda h}{15}&&&&\\
		&-\frac{1}{3h}-\frac{\lambda h}{30}&\frac{8}{3h}+\frac{\lambda h}{15}& -\frac{14}{3h}+\frac{4\lambda h}{15}&\frac{8}{3h}+\frac{\lambda h}{15}& -\frac{1}{3h}+\frac{\lambda h}{30}&\dots&\\
		&&&\dots&\dots&\dots&\dots&\\
		&&&&\frac{8}{3h}+\frac{\lambda h}{15}&\frac{-16}{3h}+\frac{8\lambda h}{15}& \frac{8}{3h}+\frac{\lambda h}{15}
	\end{pmatrix}.
\end{align*}
By multiplying with $3h$ we get the following recursive formula
\begin{align*}
	\begin{pmatrix}
		{8}+\frac{\lambda h^2}{5}& -{1}-\frac{\lambda h^2}{10}\\
		{-16}+\frac{8\lambda h^2}{5}& {8}+\frac{\lambda h^2}{5}\\
	\end{pmatrix}
	\begin{pmatrix}
		u_{2\kappa-1}\\u_{2\kappa} 
	\end{pmatrix}
	=
	\begin{pmatrix}
		-{8}-\frac{\lambda h^2}{5}& {14}-\frac{4\lambda h^2}{5}\\
		0& -{8}-\frac{\lambda h^2}{5}\\
	\end{pmatrix}
	\begin{pmatrix}
		u_{2\kappa-3}\\u_{2\kappa-2} 
	\end{pmatrix}
	+
	\begin{pmatrix}
		0& {1}+\frac{\lambda h^3}{10}\\
		0&0\\
	\end{pmatrix}
	\begin{pmatrix}
		u_{2\kappa-5}\\u_{2\kappa-4} 
	\end{pmatrix}
\end{align*}
for $\kappa=2,\dots, N_t^2/2$. Note, that $N_t^2/2\in\N$.
This recursive formula can be understood as a two-step method 
\begin{equation*}
	A\vec z_\kappa =
	B_1 z_{\kappa-1}+ B_2 z_{\kappa-2}
\end{equation*}
for the vector $\vec z_\kappa = (u_{2\kappa-1},u_{2\kappa} )^T$ with 
\begin{subequations}
	\begin{equation*}
		A :=\begin{pmatrix}
			{8}+\frac{\lambda h^2}{5}& -{1}-\frac{\lambda h^2}{10}\\
			{-16}+\frac{8\lambda h^2}{5}& {8}+\frac{\lambda h^2}{5}\\
		\end{pmatrix}, 
	\end{equation*}
	\begin{equation*}	
		B_1 := \begin{pmatrix}
			-{8}-\frac{\lambda h^2}{5}& {14}-\frac{4\lambda h^2}{5}\\
			0& -{8}-\frac{\lambda h^2}{5}\\
		\end{pmatrix},\qquad 
		B_2 \, := \begin{pmatrix}
			0& {1}+\frac{\lambda h^3}{10}\\
			0&0\\
		\end{pmatrix}.
	\end{equation*}
\end{subequations} 
Further, we can rewrite the two-step method as the system
\begin{equation}\label{eq:TimeSystem}
	Y_\kappa = A_{\text{sys}} Y_{\kappa-1}
\end{equation}
with $Y_\kappa =(u_{2\kappa-3},u_{2\kappa-2},u_{2\kappa-1}, u_{2\kappa})^T$ and 
\begin{equation*}
	A_{\text{sys}} := \begin{pmatrix}
		0&I\\A^{-1}B_2&A^{-1} B_1
	\end{pmatrix}
\end{equation*}
for $\kappa=2,\dots, N_t^2/2$. Next, we solve \eqref{eq:TimeSystem} by iterate in $\kappa$ and arrive at the formula
\begin{align*}
	Y_\kappa = A_{\text{sys}}^{\kappa-1}Y_{1}
\end{align*}
for the solution $Y_\kappa$ with the initial values 
\begin{align*}
	\begin{pmatrix}
		u_1\\u_2
	\end{pmatrix}
	= A^{-1}\begin{pmatrix}
		7-\frac{2\lambda h^2}{5}\\
		-{8}-\frac{\lambda h^2}{5}
	\end{pmatrix}
	u_0,
\end{align*}
where $u_{-1} = 0$ and $u_0= \tilde{A}_k(0)$. Therefore, if we want our system to be zero-stable, we need the real parts of the eigenvalues of $A_{\text{sys}}$  to be less than one. By analyzing the eigenvalues of $A_{\text{sys}}$ we see that two of the eigenvalues $\lambda^A_1$ and $\lambda^A_2$ are zero. The other two are given by the formula
\begin{align*}
	\lambda^A_{3,4} = \frac{-2a^2-bd\pm\sqrt{b(2c+d)(4a^2-2bc+bd)}}{2(a^2-bc)}
\end{align*} 
where
\begin{align*}
	a &= {8}+\frac{\lambda h^2}{5}, &b ={-16}+\frac{8\lambda h^2}{5},\\
	c &=-{1}-\frac{\lambda h^2}{10}, &d= {14}-\frac{4\lambda h^2}{5}.
\end{align*}
The absolute value of real part of $\lambda^A_{3,4}$, namely $|\Re(\lambda^A_{3,4})| $ is smaller than one if $\lambda_k h_t^2\leq 60$ and $\lambda_k h_t^2\notin[10,12]$, for $k\in \N_0$. If we introduce $\sigma(x) >0$, $x\in \Omega$, large enough, the small instability region  $\lambda_k h_t^2\in [10,12]$ would vanish and we are left with the condition  $\lambda_k h_t^2\leq 60$, for $k\in\N_0$ .

In computational examples, we often observe only the bound $\lambda_k h_t^2\leq 60$ for stability. However, there might be unstable examples where $\lambda_k h_t^2\in[10,12],$ $k\in \N_0$. Note that these estimates can also be used for the scalar wave equation when we use $S_h^2(\mathcal{T}^t)$ for the discretization since this bound applies to the ratio of the spatial eigenvalue and the time step size.

To use these insights in computational examples, we need to estimate the eigenvalues $\lambda_k$, $k\in\N_0$.  For this purpose, we use an inverse inequality for the $\textmd{curl}_x \mu^{-1}\textmd{curl}_x$ operator in the weighted $L^2_\varepsilon(\Omega)$-norm. 
For  lowest order Nédélec elements, we have the inverse inequality
\begin{align}\label{ieq:InverseInequality}
	\|\cu_x  u_h\|^2_{L^2(\mathcal{T}^x)} \leq c_I h_{\max}^{-2} \|u_h\|^2_{L^2(\mathcal{T}^x)} 
\end{align}
for all $u_h\in \mathrm{span} \{\phi_E^{\mathcal{N}^I_0}\}_{E\in\mathcal{E}}$, where
\begin{align*}
	c_I &:= \begin{cases}
		\frac{18 c_F^2}{\pi} &n=2,\\
		80 c_F^4 (\frac{9}{16\pi^2})^{2/3}&n =3,
	\end{cases}\\
	h_{\max} &:=\max_{\omega_l\in \mathcal{T}^x}( |\tau_l|^{-d} ).
\end{align*}
For the proof consider \cite[Lem.~A.2]{HauserDiss2021}. The proof is done by computing each norm on each element and showing the inequality there, then summing everything up. For completeness, we quickly state where the constants come from. The constant $c_F$ is the shape regularity constant defined in \eqref{FES:formregular}. Additionally, the coefficient $c_I h_{\max}^{-2}$ comes from estimating $18\frac{\lambda_{\max}(J_lJ_l^T)}{2\Delta_l}h_l^{-2}$ for $d=2$ and for $d=3$ estimating the term  $160\frac{4\lambda_{\max}(J_lJ_l^T)^2}{3(6\Delta_l)^2}$, where $J_l = \left(\frac{\partial F_{l,i}}{\partial \hat{x}_j}(\hat{x}_j)\right)_{1\leq i,j,\leq d}$ is the derivative of transformation $F_l : \hat{\omega}\to\omega_l$ which maps the reference element $\hat{\omega}$ to the current element $\omega_l$.

Now, we apply the inverse inequality to estimate the eigenvalues $\lambda_k$, $k\in \N_0$, from Lem.~\ref{lem:FS-H_0curl}. We rewrite the inverse inequality \eqref{ieq:InverseInequality} in the weighted norms to get
\begin{align*}
	(\mu^{-1}\cu_x  u_h,\cu_x  u_h)_{L^2(\mathcal{T}^x)} \leq c_I(\mu_{\min}\varepsilon_{\min})^{-1} h_{\max}^{-2} \|u_h\|^2_{L^2_\varepsilon(\mathcal{T}^x)} .
\end{align*}
Then we derive the following CFL condition for the vectorial wave equation
\begin{align*}
	c_I(\mu_{\min}\varepsilon_{\min})^{-1}\ h_{x}^{-2} \leq 10\ h_t^{-2},
\end{align*}
and therefore
\begin{align}
	\label{cond:CFL}
	h_t \leq \sqrt{\mu_{\min}\varepsilon_{\min}} \sqrt{\frac{10}{c_I}}h_{x},
\end{align}
where $h_t$ and $h_x$ are the respective maximal temporal and spatial step sizes from Section \ref{sec:FES}. In case $\sigma$ is big enough, e.g. $\sigma \geq 1$ everywhere, we even arrive at the CFL condition
\begin{align}
	\label{cond:CFL2}
	h_t \leq \sqrt{\mu_{\min}\varepsilon_{\min}} \sqrt{\frac{60}{c_I}}h_{x},
\end{align}

Let us consider an example for $\mu=1$ and $\varepsilon = I$. If we consider $\Omega = (0,1)^2$ and use a shape regular triangulation with isosceles rectangular triangles then we compute, by estimating the eigenvalues of $J_lJ_l^T$ directly, that $c_I=18$ as in \cite{HauserZank2023}. Hence, in this case, we arrive at the CFL conditions
\begin{align}
	\label{eq:CFL2}
	h_t & < \sqrt{\frac{60}{18}} h_x\approx 1.825741858\ h_x
\end{align}
and the stricter condition
\begin{align}
	\label{eq:CFL}
	h_t & < \sqrt{\frac{10}{18}} h_x\approx 0.74535599\ h_x.
\end{align}

Note that the computation of the CFL condition in this section can be done for an arbitrary polynomial degree in time. With a higher polynomial degree the system matrix $ A_{\text{sys}}$ will enlarge, but we still end up with conditional stability. Additionally, if we use a non-constant step size in time, simply the element matrices in our computation change and we will get a condition depending on the largest time step size. Hence, by using a tensor product approach we will always end up with a CFL condition.

  \subsection{Expected convergence rates}\label{sec:ExpectedConvRate}
	Let us take a look at the convergence rates we might expect in our examples if the CFL condition is satisfied. We will divide the analysis in two. First, we will analyze the convergence in time. Then we will discuss the convergence in space. We can split the analysis in two since we consider a tensor product structure. For this discussion we will use the concepts of the continuous solution from the proofs of Thm.~\ref{thm:ExistenceAndUniqueness}.  Again we consider $\sigma =0$ to analyze the critical case.
	
	Let us take a look at the convergence in time and  consider the semi-discrete function 
	\begin{align*}
		\vec A_{h_x}(t,x) = \sum_{k=1}^{\mathcal N_x}a_k(t) \vec\psi_{k}^{\mathcal{N}}(x).
	\end{align*}
	We will follow the steps of \cite[Sec.~5]{SteinbachZankETNA2020} and transfer the results from the scalar to the vectorial wave equation. After inserting this ansatz into the vectorial wave equation, we derive the system 
	\begin{align*}
		M_x \partial_{tt} \vec{a}(t) + A_{xx} \vec{a}(t) &= \underline{f}(t) \hspace*{.5cm}\textmd{ in } (0,T),\\
		\underline{a}(0) =\partial_{t} \underline{a}(0) &= 0
	\end{align*}
	with $\underline{a}(t) := (a_{1} (t),..,a_{\mathcal N_x}(t))^T$, where $M_x$ denotes the mass matrix and $A_{xx}$ the stiffness matrix from \eqref{FEM:Ortsmatrizen}. The right hand side $\underline{f}(t) :=(f_{1}(t),...,f_{\mathcal N_x}(t))$ is defined by $f_k(t) := ( \vec j_a(t,.), \vec\psi_k^{\mathcal{N}})_{L^2(\Omega)}$ just as in the proof of Prop.~\ref{prop:Existence_H1_drhs}. For this differential equation we derive the following variational formulation: Find $\underline{a} \in [H^1_{0,}(0,T)]^{\mathcal N_x}$ such that
	\begin{align}
		\label{varf:convrate}
		-\left(M_x\partial_t\underline{a}, \partial_t\vec W \right)_{L^2(0,T)}+\left(A_{xx}\underline{a}, \vec W \right)_{L^2(0,T)} = \left(\vec{f}, \vec W \right)_{L^2(0,T)}
	\end{align}
	for all $\underline{W} \in [H^1_{,0}(0,T)]^{\mathcal N_x}$. Next, we apply decompositions such as \cite[Prop.~47.6]{ErnGuermond2020II}. We use Eigendecomposition of the real symmetric matrix $A_{xx}$ and Cholesky decomposition for the  real, positive-definite, symmetric   mass matrix $M_{x}$  to arrive at the decompositions
	\begin{align*}
		A_{xx} =Q\Lambda Q^T \qquad \textmd{ and } \qquad M_{x} = LL^T.
	\end{align*}
	Here, $Q$ is an orthogonal matrix whose columns are the eigenvectors of $A_{xx}$ and $\Lambda$ is a diagonal matrix whose diagonal entries are the  eigenvalues of $A_{xx}$. On the other hand, $L$ is a real lower triangular matrix with positive diagonal entries.
	Next, we consider 
	\begin{align}
		\label{eqn:z_transform_u}
		\underline{z}:= Q^TL^T \underline{a}\in [H^1_{0,}(0,T)]^{\mathcal N_x}.
	\end{align}
	If we apply $L^T $ and then $Q^T$  to the equation \eqref{varf:convrate}, we derive 
	\begin{align}
		\label{eqn:discreteZ}
		-\langle \partial_t z_{j} ,\partial_t w\rangle_{L^2(0,T)} +\kappa_j \langle  z_{j} , w\rangle_{L^2(0,T)} = \langle g_j , w\rangle_{L^2(0,T)}
	\end{align}
	for  all $w\in  H^1_{ ,0} (0, T )$ and  every $j=1,...,\mathcal N_x$ with $z_{j}$ as the $j$-th entry of $\vec z$. The right hand side $g_{j}$ is the $j$-th entry of the vector  $\underline{g} =Q^TL^T\underline{f}$.
	Until now we were able to follow the same steps as \cite[Sec.~5]{SteinbachZankETNA2020}. However, in our case we have to consider $\kappa_j=0$ for $j<0$ as well. Therefore, the equation \eqref{eqn:discreteZ} represents two cases: The wave equation and the Poisson equation. Next, let us take a look at the convergence rate of each equation.
	
	If we consider $\kappa_j= 0$, we look at the Poisson problem
	\begin{align*}
		-\langle\partial_tz_j,\partial_tw\rangle_{L^2(0,T)}=\langle f,w\rangle_{(0,T)} \hspace*{.5cm} \forall w\in H^1_{,0}(0,T).
	\end{align*}
	For this  Poisson problem we know from \cite[Thm.~32.2, Lem.~32.11]{ErnGuermond2020II} that there exists  $c_1,c_2>0$  such that for all $z\in H^s(0,T)$
	\begin{align*}
		\|z_j-{z}_{h_t,j}\|_{L^2(0,T)}&\leq c_1 h_t|z_j|_{H^1(0,T)},\\
		\|z_j-{z}_{h_t,j}\|_{H^1(0,T)}&\leq c_2 h_t^{s-1}|z_j|_{H^s(0,T)},
	\end{align*}
	for $\frac{d}{2}<s\leq k+1$, where $k\geq 1$ is the degree of the Lagrange finite element. Hence, for first order elements, $k=2$, the highest expected convergence rate in the $L^2$-norm is two and in the $H^1$-norm it is one.
	
	On the other hand, if $\kappa_j>0$, we can use existing results for the wave equation \cite[Thm.~1]{SteinbachZankSpring2019}. In this theorem we consider $f\in [H^1_{,0}(0,T)]'$ and $z_j\in H^1_{0,} (0, T )$  as the unique solution of 
	\begin{align*}
		-\langle\partial_tz_j,\partial_tw\rangle_{L^2(0,T)}+\kappa_j \langle z_j,w\rangle_{L^2(0,T)} =\langle f,w\rangle_{(0,T)} \hspace*{.5cm} \forall w\in H^1_{,0}(0,T)
	\end{align*}
	satisfying $z_j\in H^1_{0,} (0, T ) \cap H^s (0, T ) $, for some $s \in[1, 2]$. Then the unique solution ${z}_{h_t,j} \in S^1_{h_t,0,} (0, T )$ of the Petrov-Galerkin  variational formulation
	\begin{align*}
		-\langle\partial_t{z}_{h_t,j},\partial_tw_{h_t}\rangle_{L^2(0,T)}+\kappa_j \langle {z}_{h_t,j},w_{h_t}\rangle_{L^2(0,T)} =\langle f,w_{h_t}\rangle_{(0,T)} \hspace*{.5cm} \forall w_{h_t}\in S^1_{h_t,,0} (0, T )
	\end{align*}
	satisfies
	\begin{align*}
		\|z_j-{z}_{h_t,j}\|_{L^2(0,T)}\leq c\left(1+\frac{4}{\pi}T^2\kappa_j\right)h_t^s\|z_j\|_{H^{s}(0,T)} + \frac{\mu Th_t^2}{6}|z_j|_{H^1(0,T)}
	\end{align*}
	with a constant $c > 0$ independent of $\kappa_j$ and $h_t$ .
	
	Next, we take a look at the convergence in space and recall convergence results for the Nédélec elements as stated in \cite[Ch.~5.5]{Monk}. Let $\tau\in\mathcal{T}_x$. By the construction of $(a_k(t))_k$ satisfying \eqref{varf:convrate} and since the vectorial wave equation is uniquely solvable, we see that $\vec A_h$ is in fact the projection of $\vec A$ in $\mathcal N_I(\mathcal{T}_x)$. Moreover, we know from \cite[Thm.~5.41]{Monk} that
	\begin{align*}
		\norm{\vec A(t,.)-	\vec A_{h_x}(t,.)}_{L^2(K)} + \norm{\cu(\vec A(t,.)-	\vec A_{h_x}(t,.))}_{L^2(K)}\leq c h_x^s (|\vec A(t,.)|_{H^s(K) ^3} +|\cu \vec A(t,.)|_{H^s(K) ^3} )
	\end{align*}
	for fixed $t\in(0,T)$ and $k\in \mathcal T_x$ if $\psi_i\in H^s(K) ^3$ and $1/2+\delta \leq s\leq k$, where $k$ is the degree of the Nédélec space. Following \cite[Rmk.~7.30]{Monk} we know that for linear edge elements on a cube we can expect all eigenfunctions $\vec \psi_i$ to be in $H^2(\Omega)$ with which we can construct $\vec A$. Therefore, we can expect first order convergence in the $H(\cu)$-seminorm.
	
	Putting all results together, we can expect for the $L^2(Q)$ norm second order convergence in time and first order in space. The second order convergence can also be shown by following the same steps as \cite[Thm.~2]{SteinbachZankSpring2019}. For the $H^{1;\cu}(Q)$-seminorm we can expect first order convergence. This we will also see in the numerical results of the next section.
	
\subsection{Numerical Results}\label{sec:linDep}
In this section, we will take a look at the numerical results and apply the theoretical results from above. We will first take a look at the results for $\sigma=0$ to have a baseline from which we can judge other results. 

To compute the  linear system \eqref{FEM:LGS}, we first have to assemble the right-hand side. The projection $\Pi_h^{\mathcal{RT},1} \vec j_a$ of the right-hand side $j_a$ in  \eqref{FES:L2RT} are calculated by using-high-order quadrature rules for the integrals, see \cite{HauserZank2023} for more details. The calculation of all spatial and temporal matrices ~\eqref{FEM:Ortsmatrizen} is done with the help of the finite element library NGSolve, see www.ngsolve.org and \cite{SchoeberlNetgen}. Finally, the linear system \eqref{FEM:LGS} is solved by the sparse direct solver  UMFPACK 5.7.1 \cite{Umfpack}.
\subsubsection{Testing the convergence for $\sigma \equiv 0$}
We will first take a look at examples for $\sigma  \equiv 0$ to investigate the CFL condition. For our examples in this section we consider the domain $Q = (0,2) \times (0,1)^2 $, with  $T=2$ and $\Omega=[0,1]\times[0,1]$, and $\varepsilon\equiv\begin{pmatrix}
	1&0\\0&1 \end{pmatrix}$ and $\mu \equiv 1$. We will consider two constructed solutions. The first is given by 
\begin{align}
	\label{Num:Lsg1}
	\vec A_1(t,x_1,x_2)=t^2x_1(1-x_1)x_2(1-x_2)\begin{pmatrix}x_2\\-x_1\end{pmatrix} 
\end{align}
for $(t,x_1,x_2) \in Q.$  The function $\vec A_1$ has homogeneous tangential trace $	\trt \vec A_1  =0$ and homogeneous initial conditions. If we insert $\vec A_1$ into the vectorial wave equation, we compute
\begin{align*}
	j_1(t,x_1,x_2) := \begin{pmatrix}
		2x_1(1-x_1)x_2(1-x_2)x_2 \\
		(2x_1(1-x_1)x_2(1-x_2)(-x_1)
	\end{pmatrix} + t^2\begin{pmatrix}
		x_1(x_1(5-12x_2)+10x_2-4)\\
		-x_2(-2x_1(6x_2-5)+5x_2-4)
	\end{pmatrix} .
\end{align*}

To take a look at the convergence rates,  we compute the experimental order of convergence (EOC) with
\begin{align*}
	EOC = \frac{\ln(err_{L-1}) - \ln(err_{L}) }{\ln(h_{L-1})-\ln(h_{L})},
\end{align*}
where $err_{L}$ is the $L^2(Q)$-error, or the  error in the $H^{\mathrm{curl};1}(Q)$-seminorm respectively, at level $L$. Additionally we use bisection in the refinement throughout the section \ref{sec:results}, hence $\ln(h_{L-1})-\ln(h_{L}) = \ln(2)$.

To compute Table \ref{tab:secOrder}, we solve the discrete linear system described in \eqref{FEM:LGS} for second-order elements in time and lowest order Nédélec elements in space. Here we see second order convergence in the $\norm{.}_{L^2(Q)}$-norm and first order convergence in the  $\abs{.}_{H^{\cu;1}(Q)}$-halfnorm. This is the same result as we would get for linear elements in time and $\Pi_h^{\mathcal{RT},1} \vec j_1$  as the projection of the right-hand side, see \cite{HauserZank2023}.

\begin{center}
		\begin{tabular}{c|c|c|c|c|c|c|c}
			L & $h_x$ & $h_t$ & \#fdofs & $\| \vec A-\vec A_h\|_{L^2(Q)} $ & EOC & $|\vec  A-\vec A_h|_{H^{\mathrm{curl};1}(Q)} $ & EOC\\ 
			\hline
			0 & 0.5000 & 0.5000 & 80 & 3.25e-02 & -  & 1.76e-01 & - \\ 
			1 & 0.2500 & 0.2500 & 896 & 1.27e-02 & 1.36 & 1.15e-01 & 0.62\\ 
			2 & 0.1250 & 0.1250 & 8192 & 3.24e-03 & 1.97 & 5.90e-02 & 0.97\\ 
			3 & 0.0625 & 0.0625 & 69632 & 7.87e-04 & 2.04 & 2.97e-02 & 0.99\\ 
			4 & 0.0312 & 0.0312 & 573440 & 1.92e-04 & 2.03 & 1.48e-02 & 1.00\\
		\end{tabular}
		\captionof{table}{Error table for the Galerkin--Petrov FEM~\eqref{vf:H1_disc} for the unit square $\Omega$ and $T=2$ and the solution $\vec A_1$ in \eqref{Num:Lsg1} using a uniform refinement strategy.}\label{tab:secOrder}
\end{center}
Next, we want to test the sharpness of the CFL condition \eqref{eq:CFL2}. To that purpose, we use a more complicated artificial solution $\vec A_2$, which was also used in \cite{HauserZank2023}. We set $T=\sqrt{2}$ to test the example of \cite{HauserZank2023} where the solution
\begin{equation}  \label{Num:Lsg2}
	\vec A_2 (t,x_1,x_2) 
	=
	\begin{pmatrix}
		-5 t^2 x_2 (1-x_2) \\
		t^2 x_1 (1-x_1)
	\end{pmatrix}+ t^3
	\begin{pmatrix}
		\sin(\pi x_1) x_2 (1-x_2) \\
		0
	\end{pmatrix}
\end{equation}
was used for $(t,x_1,x_2) \in {Q}.$ The function $\vec A_2$ fulfills the homogeneous boundary condition $	\trt \vec A_2  =0$ and has homogeneous initial conditions.
The related right-hand side $\vec j_2$ is given by
\begin{equation*}
	\vec j_2(t,x_1,x_2) = \begin{pmatrix}
		-10 (t^2-x_2^2+x_2)  \\
		2 (t^2-x_1^2+x_1)
	\end{pmatrix}
	+
	\begin{pmatrix}
		2t^3\sin(\pi x_1) +6t\sin(\pi x_1) x_2 (1-x_2) \\
		\pi t^3(1-2x_2)\cos(\pi x_1)
	\end{pmatrix}
\end{equation*}
for $(t,x_1,x_2) \in Q$ and $\mathrm{div}_x  \vec j_2 \neq 0$.

\begin{table}[h]
\begin{center}
	\caption{Errors in $\norm{\cdot}_{L^2(Q)}$ for the Galerkin--Petrov FEM  \eqref{vf:H1_disc} over $(0,\sqrt{2})\times(0,1)^2$ and the solution $\vec A_2$ in \eqref{Num:Lsg2}.}
	\begin{tabular}{c|ccccc}
			\diagbox{$h_x$}{\vspace*{-.1cm}$h_t$}&    0.2828 & 0.1414 &0.0707 &  0.0354 & 0.0177 \\
			\hline\hline
		0.1768 & 4.19e-02 & 4.19e-02 & 4.19e-02 & 4.19e-02 & 4.19e-02 \\
		0.0884 & 1.04e-02 & 1.04e-02 & 1.04e-02 & 1.04e-02 & 1.04e-02 \\
		0.0442 & 2.60e-03 & 2.08e-02 & 2.59e-03 & 2.59e-03 & 2.59e-03 \\
		0.0221 & 6.59e-04 & 1.61e-02 & 1.72e+03 & 6.47e-04 & 6.47e-04 \\
	\end{tabular}
	\label{Num:TabT2L2}
\end{center}
\end{table}

\begin{table}[h]
	\begin{center}
	\caption{Errors in $\abs{\cdot}_{H^{\cu;1}(Q)}$ for the Galerkin--Petrov FEM  \eqref{vf:H1_disc} over $(0,\sqrt{2})\times(0,1)^2$ and the solution $\vec A_2$ in \eqref{Num:Lsg2}.}
	\begin{tabular}{c|ccccc}
		\diagbox{$h_x$}{\vspace*{-.1cm}$h_t$}&   0.2828 &  0.1414 & 0.0707 & 0.0354 & 0.0177 \\
		\hline\hline		
		0.1768 & 6.22e-01 & 6.22e-01 & 6.22e-01 & 6.22e-01 & 6.22e-01 \\
		0.0884 & 3.09e-01 & 3.08e-01 & 3.08e-01 & 3.08e-01 & 3.08e-01 \\
		0.0442 & 1.54e-01 & 2.16e+00 & 1.54e-01 & 1.54e-01 & 1.54e-01 \\
		0.0221 & 7.69e-02 & 3.13e+00 & 3.61e+05 & 7.69e-02 & 7.69e-02 \\
	\end{tabular}
	\label{Num:TabT2HC1}
\end{center}
\end{table}

First let us take $T=\sqrt{2}$ and look at the $L^2(Q)$-norm error table Tab.~\ref{Num:TabT2L2}  and the $H^{\cu;1}(Q)$-seminorm error  table Tab.~\ref{Num:TabT2HC1}. This is the same example as the numerical example in the paper \cite{HauserZank2023} which showcased the sharpness of the CFL condition for piecewise linear finite elements in time and lowest order Nédélec elements in space. In the paper \cite{HauserZank2023}  we observe first-order convergence in the $H^{\cu;1}(Q)$-seminorm  and second-order convergence in the $L^2(Q)$-norm in the case of stability.
 We see  this behavior as well in the interpolation errors in Table \ref{Num:InterpT2L2} and \ref{Num:InterpT2HC1}. Here we see first-order convergence in Tab.~\ref{Num:InterpT2HC1} and second-order convergence in Tab.~\ref{Num:InterpT2L2}. When we compare these results to the results we find in the error tables \ref{Num:TabT2L2} and \ref{Num:TabT2HC1} we see first-order convergence in the $H^{\cu;1}(Q)$-seminorm and second order $L^2(Q)$-norm after stability is achieved. 

On the other hand, when we take a look at the stability of the results in table \ref{Num:TabT2L2} and \ref{Num:TabT2HC1} we clearly see the influence of the CFL condition. Following the last line we see in the third column, where $h_t/h_x \approx 0.0707/0.0221 \approx 3.1991 $ that our solution does not converge. On the other hand in the fourth column, we have $h_t/h_x \approx 0.0354/0.0221 \approx 1.6018 $ which satisfies our CFL condition and we achieve convergence. In comparison with the results for piecewise linear elements in time in \cite{HauserZank2023} we see  that the second-order elements in time  result in a smaller error in total. On the other hand, we see a quicker convergence in the time refinement when we fix the spatial discretization.

\begin{table}
	\begin{center}
	\caption{Interpolation errors in $\norm{\cdot}_{L^2(Q)}$  in $(0,\sqrt{2})\times(0,1)^2$ and the solution $\vec A_2$ in \eqref{Num:Lsg2}.}
	\begin{tabular}{c|ccccc}
		\diagbox{$h_x$}{\vspace*{-.1cm}$h_t$}&  0.2828 &  0.1414 &  0.0707 &  0.0354&  0.0177\\
		\hline\hline
		0.1768 & 7.49e-02 & 7.49e-02 & 7.49e-02 & 7.49e-02 & 7.49e-02 \\
		0.0884 & 1.93e-02 & 1.93e-02 & 1.93e-02 & 1.93e-02 & 1.93e-02 \\
		0.0442 & 4.84e-03 & 4.81e-03 & 4.81e-03 & 4.81e-03 & 4.81e-03 \\
		0.0221 & 1.24e-03 & 1.21e-03 & 1.20e-03 & 1.20e-03 & 1.20e-03 \\
	\end{tabular}
	\label{Num:InterpT2L2}
\end{center}
\end{table}

\begin{table}
	\begin{center}
	\caption{Interpolation errors in $\abs{\cdot}_{H^{\cu;1}(Q)}$  in $(0,\sqrt{2})\times(0,1)^2$ and the solution $\vec A_2$ in \eqref{Num:Lsg2}.}
	\begin{tabular}{c|ccccc}
	\diagbox{$h_x$}{\vspace*{-.1cm}$h_t$}&    0.2828&  0.1414 & 0.0707 &  0.0354 &  0.0177 \\
		\hline\hline
		0.1768 & 6.49e-01 & 6.49e-01 & 6.49e-01 & 6.49e-01 & 6.49e-01 \\
		0.0884 & 3.20e-01 & 3.20e-01 & 3.20e-01 & 3.20e-01 & 3.20e-01 \\
		0.0442 & 1.59e-01 & 1.59e-01 & 1.59e-01 & 1.59e-01 & 1.59e-01 \\
		0.0221 & 7.95e-02 & 7.94e-02 & 7.94e-02 & 7.94e-02 & 7.94e-02 \\
	\end{tabular}
	\label{Num:InterpT2HC1}	
	\end{center}
\end{table}
To showcase the behavior of the CFL condition \eqref{eq:CFL2} we take a look at the same example, but take the final time $T=\sqrt{10.4}$. We use the same number of elements in time and space as in the simulation for Tab.~\ref{Num:TabT2L2} and \ref{Num:TabT2HC1}. By taking the final time $T=\sqrt{10.4}$ we see a ratio of the time and spatial mesh size which is close to the CFL condition of section \ref{sec:cfl-condition}. For these values, we see  in both tables \ref{Num:TabL2} and \ref{Num:TabHC1}, e.g., in the third row, second to last column or last row, last column that the ratio  {equals}
\begin{align*}
	\frac{h_t}{h_x} \approx 	\frac{0.0806}{0.0442} = \frac{0.0403}{0.0221}  \approx 1.82352941.
\end{align*}
This ratio is below the CFL condition 	$h_t  < 1.825741858\ h_x.$ On the other hand in row three, column three and row four, column four we we do not satisfy the CFL condition with the ratio 
\begin{align*}
	\frac{h_t}{h_x} \approx 	\frac{0.1612}{0.0442} = \frac{0.0806}{0.0221}  \approx 3.64705.
\end{align*}
Therefore we see no convergence in our results for these cases.

\begin{table}
		\begin{center}
	\caption{Errors in $\norm{\cdot}_{L^2(Q)}$ for the Galerkin--Petrov FEM \eqref{vf:H1_disc} in $(0,\sqrt{10.4})\times(0,1)^2$ and the solution $\vec A_2$ in \eqref{Num:Lsg2}.}
	\begin{tabular}{c|ccccc}
		\diagbox{$h_x$}{\vspace*{-.1cm}$h_t$}&   0.6450 &  0.3225 &  0.1612 & 0.0806 & 0.0403 \\
			\hline\hline
		0.1768 & 3.54e-01 & 3.54e-01 & 3.54e-01 & 3.54e-01 & 3.54e-01 \\
		0.0884 & 8.90e-02 & 1.04e+00 & 8.85e-02 & 8.85e-02 & 8.85e-02 \\
		0.0442 & 2.29e-02 & 4.87e-01 & 1.95e+05 & 2.21e-02 & 2.21e-02 \\
		0.0221 & 7.88e-03 & 1.83e-01 & 9.99e+05 & 1.04e+19 & 5.52e-03 \\
	\end{tabular}
	\label{Num:TabL2}
\end{center}
\end{table}

\begin{table}
	\caption{Errors in $\abs{\cdot}_{H^{\cu;1}(Q)}$  for the Galerkin--Petrov FEM \eqref{vf:H1_disc} in $(0,\sqrt{10.4})\times(0,1)^2$ and the solution $\vec A_2$ in \eqref{Num:Lsg2}.}
	\begin{center}
	\begin{tabular}{c|ccccc}
		\diagbox{$h_x$}{\vspace*{-.1cm}$h_t$}&    0.6450 &  0.3225 &  0.1612 &  0.0806 &  0.0403 \\
			\hline\hline
		0.1768 & 4.29e+00 & 4.28e+00 & 4.28e+00 & 4.28e+00 & 4.28e+00 \\
		0.0884 & 2.15e+00 & 5.25e+01 & 2.14e+00 & 2.14e+00 & 2.14e+00 \\
		0.0442 & 1.08e+00 & 4.57e+01 & 2.00e+07 & 1.07e+00 & 1.07e+00 \\
		0.0221 & 5.57e-01 & 1.66e+01 & 1.77e+08 & 1.93e+21 & 5.36e-01 \\
	\end{tabular}
	\label{Num:TabHC1}
\end{center}
\end{table}

\subsubsection{The convergence for $\sigma \not\equiv 0$ }
As a last case, we want to take a look at the convergence if we set $\sigma \not\equiv 0$ on a subdomain of the spatial domain $\Omega$. 
For that purpose we consider again $ Q= (0,2)\times [0,1]^2$  with  $\varepsilon\equiv\begin{pmatrix}	1&0\\0&1 \end{pmatrix}$, $\mu \equiv 1$. We take the constructed solution $\vec A_1$ of \eqref{Num:Lsg1}.
However, we choose $\sigma = 1$ over  $\mathrm{conv}\{(0.5,0.35),(0.65,0.5), (0.5,0.65),(0.35,0.5)\}$ as shown in Fig.~\ref{Fig:SupportSig} and zero elsewhere. Then we compute the constructed right hand side $\vec j_a = \partial_{tt}\vec A_1+\sigma \partial_t\vec A_1 + \cu_x\cu_x\vec  A_1$.
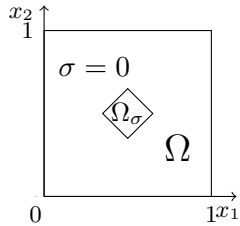
\begin{figure}[!ht]
	\centering
	\begin{tikzpicture}[scale=2.2]
		\draw (0,0) rectangle(1,1);
		\draw (0.5,0.35) -- (0.65,0.5)--(0.5,0.65)--(0.35,0.5)--cycle;
		\node (A) at (0.5,0.5) [] {\small $\Omega_\sigma $};
		\node (B) at (0.8,0.15) [above] {\Large$\Omega$};
		\node (C) at (0.3,0.9) [below] {$\sigma = 0$};
		\draw[->] (0,0) -- (1.15,0);
		\draw[->] (0,0) -- (0,1.15);
		\fill[black]  (1.1,0) circle [radius=0pt] node[below] {\footnotesize$x_{1}$}; 
		\fill[black]  (0,1.1) circle [radius=0pt] node[left] {\footnotesize$x_{2}$}; 
		\fill[black]  (-0.05,0) circle [radius=0pt] node[below] {\footnotesize$0$}; 
		\fill[black]  (1,0) circle [radius=0pt] node[below] {\footnotesize$1$}; 
		\fill[black]  (0,1) circle [radius=0pt] node[left] {\footnotesize$1$}; 
	\end{tikzpicture}
	\caption{The domain  $\Omega$ and the support $\Omega_\sigma =\text{supp}(\sigma)$ of the conductivity $\sigma.$}\label{Fig:SupportSig}
\end{figure}

When we solve \eqref{FEM:LGS} for this example we get the $L^2(Q)$-norm error as shown in Table \ref{Tab:SigmaposL2}  and the $H^{\cu;1}(Q)$-seminorm error shown in Table \ref{Tab:SigmaposH1C} for piecewise quadratic functions in time. In both tables, Tab.~\ref{Tab:SigmaposH1C} and Tab.~\ref{Tab:SigmaposL2}, we see the CFL condition \eqref{eq:CFL2}, $h_t  < 1.825741858\ h_x$, as in the case of $\sigma \equiv 0$. Indeed, in each table, we see in the last row and the second column where $h_t/h_x = 4.7348 $ to the third column where $h_t/h_x = 2.3674 $ an increase in the error which stops after the CFL condition is satisfied in the fourth column where $h_t/h_x=1.18 $. After the CFL condition is met we again see linear convergence in the $H^{\cu;1}(Q)$-seminorm but no second-order convergence in the $L^2(Q)$-norm, however. Note, that the stricter CFL condition \eqref{eq:CFL} does not apply here.

\begin{table}[!ht]
	\caption{Errors in $\norm{\cdot}_{L^2(Q)}$ for the Galerkin--Petrov FEM~\eqref{vf:H1_disc} in $(0,{2})\times(0,1)^2$ and the solution $\vec A_1$ in \eqref{Num:Lsg1}.}
	\begin{center}
	\begin{tabular}{c|ccccc}
			\diagbox{$h_x$}{\vspace*{-.1cm}$h_t$}&   0.2500 &  0.1250 &  0.0625 &  0.0312 &  0.0156 \\
			\hline\hline
		0.2111 & 1.06e-02 & 1.06e-02 & 1.06e-02 & 1.06e-02 & 1.06e-02 \\
		0.1055 & 3.72e-03 & 3.70e-03 & 3.70e-03 & 3.70e-03 & 3.70e-03 \\
		0.0528 & 2.23e-03 & 1.31e+00 & 2.14e-03 & 2.14e-03 & 2.14e-03 \\
		0.0264 & 1.49e-03 & 1.77e+01 & 6.57e+10 & 1.48e-03 & 1.48e-03 \\
	\end{tabular}\label{Tab:SigmaposL2}
	\end{center}
\end{table}

	\begin{table}[!ht]
	\caption{Errors in $\abs{\cdot}_{H^{\cu;1}(Q)}$ for the Galerkin--Petrov FEM~\eqref{vf:H1_disc} in $(0,{2})\times(0,1)^2$ and the solution $\vec A_1$ in \eqref{Num:Lsg1}.}
	\begin{center}
	\begin{tabular}{c|ccccc}
		\diagbox{$h_x$}{\vspace*{-.1cm}$h_t$}&   0.2500 & 0.1250 &  0.0625 &  0.0312 &  0.0156 \\
			\hline\hline
		0.2111 & 1.08e-01 & 1.08e-01 & 1.08e-01 & 1.08e-01 & 1.08e-01 \\
		0.1055 & 5.76e-02 & 5.47e-02 & 5.47e-02 & 5.47e-02 & 5.47e-02 \\
		0.0528 & 6.48e-02 & 1.49e+02 & 2.76e-02 & 2.76e-02 & 2.76e-02 \\
		0.0264 & 2.12e-02 & 3.42e+03 & 1.49e+13 & 1.39e-02 & 1.42e-02 \\
	\end{tabular}\label{Tab:SigmaposH1C}
\end{center}
\end{table}

\begin{rmk}
	In the case of $\varepsilon = \varepsilon_0$,  $\mu^{-1} = \mu_0^{-1}$ we get 
	$	h_t  <\sqrt{ \varepsilon_0\mu_0}\sqrt{\frac{60}{18}} h_x\approx 6.09	\ 10^{-9 } \ h_x$ 
	since  $\mu_0 = 1.256637\ 10^{-6}$ and $  \varepsilon_0 = 8.854188\ 10^{-12}.$ In this case, since $\mu^{-1}$ and $\sigma$ are at least of order $10^{12}$ greater, it might be reasonable to think about the application of this problem. Either, we consider a very fine resolution where we are interested in the resolution on the atomic level or it might be reasonable to consider the Eddy Current problem instead of solving \eqref{vf:H1_disc}. This will be a topic of future work.
\end{rmk}

%-------------------------------------------------------------------------------
\section{Conclusion} \label{Sec:Zum}	
In this paper, we have investigated the unique solvability of the variational formulation of the vectorial wave equation considering Ohm's law. First, we have taken a look at the trial and test spaces and showed properties of the functional spaces that we needed to prove the solvability. Then we proved unique solvability for the variational formula \eqref{vf:H1VarForm}, where $\vec j_a\in L^1(0,T;L^2(\Omega;\R^d))$. For electromagnetic problems, the variational formula \eqref{vf:H1VarForm} applies to a variety of electromagnetic examples since it is posed for Ohm's law and anisotropic material.

Having proven the unique solvability, we turned to computational examples in the tensor product. We used piecewise quadratic ansatz functions in time and lowest order Nédélec elements in space. Here, as in the case of linear ansatz functions in time, \cite{HauserZank2023}, we realized that there is a CFL condition. We calculated the CFL condition and gave examples. Using the reasoning from Section 3.3, it is also possible to derive a CFL condition for other higher-order elements in time. We thus learn that simply increasing the order of finite elements in time does not lead to an unconditionally stable method. In the case of the tensor product structure, we can always expect a CFL condition for the space-time discretization of the vectorial wave equation. A possible solution is the use of the modified Hilbert transform as performed in \cite{HauserZank2023}. 

The results of this work form the basis for more complicated electromagnetic problems. We observed
the main difficulties of the vectorial wave equation and what they imply. This will be a starting point for future work on the calculation of eddy current problems and further applied calculations such as unstructured space-time meshes.
%-------------------------------------------------------------------------------
\bibliographystyle{acm}
\bibliography{references}

\end{document}